\pgfplotsset{compat=1.14}
\newtheorem{theorem}{Theorem}[section]
\newtheorem{corollary}[theorem]{Corollary}
\newtheorem{fact}[theorem]{Fact}
\newtheorem{lemma}[theorem]{Lemma}
\theoremstyle{definition}
\newtheorem{definition}[theorem]{Definition}
\newtheorem{remark}[theorem]{Remark}
\newtheorem{example}[theorem]{Example}
\numberwithin{equation}{section}
\newcommand{\Me}[1][X]{\ensuremath{\mathcal{M}^{\mathsf{erg}}_{\sigma}\left(#1\right)}}
\newcommand{\dist}[2][x(0,...,n-1)]{\ensuremath{d^*\left(#1,#2 \right)}}
\newcommand{\U}{\ensuremath{\mathcal{U}}}
\newcommand{\V}{\ensuremath{\mathcal{V}}}
\newcommand{\M}{\ensuremath{\mathcal{M}}}
\newcommand{\T}{\ensuremath{\mathcal{T}}}
\newcommand{\N}{\ensuremath{\mathbb{N}}}
\newcommand{\bT}{\ensuremath{\bm{\mathsf{{T}}}}}
\newcommand*{\medcup}{\mathbin{\scalebox{1.5}{\ensuremath{\cup}}}}
\newcommand{\Z}{\ensuremath{\mathbb{Z}}}
\def \im {invariant measure}
\def \sq {sequence}
\title[Decomposition of a symbolic element into ergodic measures]{Decomposition of a symbolic element over a countable amenable group into blocks approximating ergodic measures}
\author{Tomasz Downarowicz}
\address{Faculty of Pure and Applied Mathematics, Wroc\l aw University
	of Science and Technology, Wybrze\.ze Wyspia\'nskiego 21, 50-370
	Wroc\l aw, Poland \\ (e-mail: Tomasz.Downarowicz@pwr.edu.pl)}
\author{Mateusz Wi\k{e}cek}
\address{Faculty of Pure and Applied Mathematics, Wroc\l aw University
	of Science and Technology, Wybrze\.ze Wyspia\'nskiego 21, 50-370
	Wroc\l aw, Poland\\ (e-mail: Mateusz.Wiecek@pwr.edu.pl)}
\begin{document}
	\thanks{The research of first author was supported by the NCN grant 2018/30/M/ST1/00061 and by the Wroc\l aw University of Science and Technology grant 8201003902.}
	\keywords{Symbolic systems, ergodic decomposition, infinite concatenation, amenable group action, tilings}
	\subjclass[2010]{prim.: 37B10, 37B05, 37C85, 37A15, sec.: 43A07, 22D40}
\maketitle
\begin{abstract}
	Consider a subshift over a finite alphabet, $X\subset \Lambda^\Z$ (or $X\subset\Lambda^{\N_0}$). With each finite block $B\in\Lambda^k$ appearing in $X$ we associate the \emph{empirical measure} ascribing to every block $C\in\Lambda^l$ the frequency of occurrences of $C$ in $B$. By comparing the values ascribed to blocks $C$ we define a metric on the combined space of blocks $B$ and probability measures $\mu$ on $X$, whose restriction to the space of measures is compatible with the weak-$\star$ topology. Next, in this combined metric space we fix an open set $\U$ containing all ergodic measures, and we say that a block $B$ is ``ergodic'' if $B\in\U$. 
	In this paper we prove the following main result: Given $\varepsilon>0$, every $x\in X$ decomposes as a concatenation of blocks of bounded lengths in such a way that, after ignoring a set $M$ of coordinates of upper Banach density smaller than $\varepsilon$, all blocks in the decomposition are ergodic. We also prove a finitistic version of this theorem (about decomposition of long blocks), and a version about decomposition of $x\in X$ into finite blocks of unbounded lengths. The second main result concerns subshifts whose set of ergodic measures is closed. We show that, in this case, no matter how $x\in X$ is partitioned into blocks (as long as their lengths are sufficiently large and bounded), after ignoring a set $M$ of upper Banach density smaller than $\varepsilon$, all blocks in the decomposition are ergodic. The first half of the paper is concluded by examples showing, among other things, that the small set $M$, in both main theorems, cannot be avoided.
	
	The second half of the paper is devoted to generalizing the two main results described above to subshifts $X\subset\Lambda^G$ with the action of a countable amenable group $G$. The role of long blocks is played by blocks whose domains are members of a F\o lner sequence while the decomposition of $x\in X$ into blocks (of which majority is ergodic) is obtained with the help of a congruent system of tilings.
\end{abstract}

\section*{Introduction}

In symbolic dynamics an invariant measure is determined by its values assumed on cylinders pertaining to finite blocks $C$. Given a long block $B$ we consider a function assigning to every block $C$ its \emph{frequency of occurrences} in $B$. In this manner, $B$ determines some kind of substitute of an invariant measure, which we call the \emph{empirical measure} associated to~$B$. Moreover, there is a natural metric measuring the distance between empirical measures associated to long blocks and invariant measures. Abusing slightly the terminology, we will say that the metric measures the distance between blocks and \im s. It is not hard to prove that any sufficiently long block $B$ occurring in a symbolic system $(X,\sigma)$ lies very close to some \im\ $\mu\in\M_\sigma(X)$, where $\M_\sigma(X)$ denotes the set of all shift-\im s supported by $X$.
 
On the other hand, it is a well-known fact that any \im\ $\mu\in\M_\sigma(X)$ decomposes as an integral average of ergodic measures supported by $X$. Henceforth, a question arises: Supposing that a long block $B$ appearing in $X$ is very close to an \im\ $\mu\in\M_\sigma(X)$, how is the ergodic decomposition of $\mu$ reflected in the structure of $B$?

Let us tentatively call a block $C$ \emph{ergodic} if it lies very close to some ergodic measure $\mu_C\in\M_\sigma^{\mathsf{erg}}(X)$ (by $\M_\sigma^{\mathsf{erg}}(X)$ we will denote the set of ergodic measures of $(X,\sigma)$). It is easy to see that if $B$ is a concatenation of ergodic blocks (not necessarily of equal lengths), say $B=C_1C_2,\dots,C_n$, then $B$ lies very close to the \im\ $\mu$ obtained as a convex combination (with appropriate coefficients) of the ergodic measures $\mu_{C_i}$, $i=1,2,\dots,n$. The question asked in the preceding paragraph takes on the following, more particular form: Is~being a concatenation of ergodic blocks \emph{the only possibility} for a long block $B$ to lie close to an \im? In this paper (among other things) we answer the above question positively after admitting a small correction in its formulation: 
\begin{enumerate}
	\item[($*$)] \emph{Every sufficiently long block $B$ appearing in a subshift $X$ decomposes as a concatenation of blocks of which vast majority (in terms of percentage of the total length) are ergodic.}
\end{enumerate} 
\noindent The above result is obtained as a corollary of a theorem stating that any sequence $x\in X$ can be decomposed into finite blocks such that the fraction of ergodic blocks (with respect to upper Banach density) is close to 1. The solution requires invoking subtle interplay between measures and blocks in symbolic systems as well as some properties of simplices in metric vector spaces.

 We comment that our result is interesting mainly for \emph{proper} subshifts. It is well known that in the set of \im s of the full shift ergodic measures lie densely. So, since any sufficiently long block $B$ lies very close to an \im, it lies equally close to an ergodic measure, i.e.\ $B$ is ergodic itself and needs not be decomposed any further.\footnote{However, even in case of the full shift our theorem does not completely trivialize. Since we can define ``ergodicity'' of blocks using an arbitrary open set around $\M_\sigma^{\mathsf{erg}}(X)$, not necessarily a ball with respect to  some distance, even for the full shift many long blocks can be classified as ``nonergodic''.} The problem becomes the less trivial the smaller (topologically) is the set of ergodic measures within $\M_\sigma(X)$. We pay a special attention to the case when $\M_\sigma(X)$ is a Bauer simplex, since then the ergodic measures form a closed, nowhere dense subset of $\M_\sigma(X)$.

 While the property $(*)$ of long blocks, may seem predictable for classical subshifts, an analogous property of ``blocks'', appearing in subshifts with an action of a general countable amenable group, is by far less obvious. It is a priori not even clear whether blocks with domains large enough to be close to invariant measures can be concatenated together. We made a (successful) attempt to overcome this and other difficulties and generalize our results to subshifts over countable amenable groups. 

 The paper is divided into five sections. Sections~\ref{sekcja1} and~\ref{sekcja4} are of preliminary character. The former pertains to classical symbolic systems with the action of $\Z$ or $\N_0$ (called also \emph{two-sided} and \emph{one-sided subshifts}, respectively), whereas the latter is concerned with subshifts over a general countable amenable group. Section~\ref{sekcja4} contains also an exposition on tilings and systems of dynamical tilings of amenable groups, which play an important role in section~\ref{sekcja5}. The first series of theorems concerning the decomposition of a symbolic element of (as well as a sufficiently long block appearing in) a classical subshift $X$ into blocks approximating ergodic measures is formulated and proved in section~\ref{sekcja2}. It is shown that for any open neighbourhood $\mathcal{U}$ of the set of ergodic measures of a symbolic system $X$ and any positive $\varepsilon$, for every $x\in X$, \textbf{there exists} a decomposition of $x$ into finite blocks of bounded lengths, such that the domains of those blocks which do not lie in $\U$ cover a set in $\Z$ (or $\N_0$) of upper Banach density smaller than $\varepsilon$. A small modification of the proof allows us to deduce that for every $x\in X$ there exists also a decomposition into finite blocks of \emph{unbounded} lengths, such that the domains of blocks not lying in $\U$ cover a set of upper Banach density~$0$. Moreover, it is proved that in a subshift $X$ for which shift-invariant measures form a Bauer simplex, \textbf{for any} decomposition of an element $x\in X$ into sufficiently long blocks, the fraction (with respect to  upper Banach density) of those blocks which do not lie in $\U$ is smaller than $\varepsilon$. In section~\ref{sekcja3} we provide three examples showing that the assumptions in theorems from section~\ref{sekcja2} cannot be omitted. Section~\ref{sekcja5} is dedicated to generalizing the main results of section~\ref{sekcja2} to the case of symbolic systems with an action of a countable amenable group $G$. In this case, the role of long blocks is played by ``blocks'', whose domains are sets with good F{\o}lner properties. Our methodology heavily relies on the theory of tilings and congruent systems of dynamical tilings, explained in section~\ref{sekcja3}.
\section{Classical symbolic systems}\label{sekcja1}
All theorems provided in this section are standard and their proofs are omitted.

 Let $\Lambda$ be a finite, discrete space called an \emph{alphabet}. By a classical \emph{symbolic system} with the action of $\Z$ (resp.\ $\N_0$) we mean a two-sided (resp.\ one-sided) subshift, i.e. any subset $X$ of $\Lambda^{\Z}$ (resp.\ $\Lambda^{\N_0}$), which is closed and invariant under the \emph{shift} transformation $\sigma$ given by
\begin{equation*}
(\sigma(x))(i)=x(i+1),\ \ \ x\in \Lambda^{\Z},\ i\in\Z\ (\text{resp.\ }x\in\Lambda^{\N_0},\ i\in\N_0).
\end{equation*}

 From now on, to avoid repeating that $i$ ranges over either $\Z$ or $\N_0$, depending on the type of subshift, we will skip indicating the range of that index. By a \emph{block} of length $k$ we mean any element $B=(B(0),B(1),\dots, B(k-1))\in\Lambda^k$. The length $k$ of the block $B$ is also denoted by $|B|$. If, for some $x\in X$ and $i$, we have $\sigma^i(x)|_{[0,k)}=B$, we say that the block $B$ \emph{occurs} in $x$ at the position $i$. Abusing slightly the notation, we write $x|_{[i,i+k)}=B$ and call the interval $[i,i+k)$ \emph{the domain of the occurrence of the block $B$ in $x$}. A similar convention is applied to ``subblocks'' of blocks: If $B\in\Lambda^k$  and $[n,m)\subset [0,k)$, then by $B|_{[n,m)}$ we mean the block $C\in \Lambda^{m-n}$ defined by $C(l)=B(n+l)$ for all $l=0,\dots,m-n-1$. The set of all blocks occurring in the elements of $X$ is denoted by $\mathcal{B}^*(X)$.
\begin{definition} Let $B\in\Lambda^{k}$ and $C\in\Lambda^l$, where $l\le k$. The \emph{frequency of occurrences of the block $C$ in the block $B$} is defined as
	\begin{equation}
	\mathrm{Fr}_B(C)=\frac{|\{i\in[n,n+k-l]: B|_{[i,i+l)}= C\}|}{k}.
	\end{equation}
	If $|C|>|B|$, we let $\mathrm{Fr}_B(C)=0$.
\end{definition}

 For a fixed block $B\in\Lambda^k$, frequencies of occurrences of blocks $C\in\Lambda^l$ in $B$, where $l\le k$, form a sub-probability vector. Using the notion of frequency of occurrences of one block in another, we define a distance between two blocks $B_1,B_2\in\mathcal{B}^*(X)$ by
\begin{equation}\label{dist_blocks}
d^*(B_1,B_2)=\sum_{l=1}^{+\infty}2^{-l}\sum_{C\in\Lambda^l}|\mathrm{Fr}_{B_1}(C)-\mathrm{Fr}_{B_2}(C)|.
\end{equation}

 In what follows, $\mathcal{M}(X)$ denotes the set of all Borel probability measures on $X$, while $\mathcal{M}_{\sigma}(X)\subset\mathcal{M}(X)$ denotes the set of all shift-invariant measures (i.e.\ such that $\mu(A)=\mu(\sigma^{-1}(A))$ for every Borel set $A\subset X$). Further, $\Me[X]\subset\mathcal{M}_{\sigma}(X)$ denotes the set of all ergodic measures (i.e.\ such that $\mu(A\,\triangle\,\sigma^{-1}(A))=0\Rightarrow \mu(A)\in\{0,1\}$, for any Borel set $A\subset X$). Note that $\mathcal{M}_{\sigma}(X)$ is a closed, convex subset of $\mathcal{M}(X)$, which is compact in weak-$\star$ topology. It is well known that the extreme points of $\mathcal{M}_{\sigma}(X)$ are the ergodic measures.

 The formula~\eqref{dist_blocks} is similar to the one defining the standard metric on~$\mathcal{M}(X)$:
\begin{equation}\label{dist_measures}
d^*(\mu_1,\mu_2)=\sum_{l=1}^{+\infty}2^{-l}\sum_{C\in\Lambda^l}|\mu_1([C])-\mu_2([C])|, \ \mu_1,\mu_2\in \mathcal{M}(X),
\end{equation}
where $[C]=\{x\in X: x|_{[0,l)}=C\}$ denotes the \emph{cylinder associated with the block $C$}. Note that the above metric is compatible with the weak-$\star$ topology on $\mathcal{M}(X)$. Henceforth, we can define the distance between a block and an invariant measure~by
\begin{equation}\label{dist_block_measure}
d^*(B,\mu)=\sum_{l=1}^{+\infty}2^{-l}\sum_{C\in\Lambda^l}|\mathrm{Fr}_B(C)-\mu([C])|, \ B\in\mathcal{B}^*(X),\ \mu\in \mathcal{M}(X).
\end{equation}
Then, the function $d^*$, given by the equations~\eqref{dist_blocks},~\eqref{dist_measures} and~\eqref{dist_block_measure} is a metric on the set $\mathcal{B}^*(X)\cup\mathcal{M}(X)$. Moreover, it turns out that sufficiently long blocks lie uniformly close to the set $\mathcal{M}_{\sigma}(X)$, as stated in the next theorem (see~ \cite[Fact 6.6.1]{Entropy}).
\begin{theorem}\label{blocks_close_to_measures}
	Fix an $\varepsilon>0$. There exists $l_0\in\N$ such that for all $l\ge l_0$ and every block $B\in\mathcal{B}^*(X)$ of length $l$ we have
	\begin{equation*}
	d^*(B,\mathcal{M}_{\sigma}(X))<\varepsilon.
	\end{equation*}
\end{theorem}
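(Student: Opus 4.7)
The plan is to argue by contradiction, using the familiar procedure of turning long blocks into empirical measures and passing to a weak-$\star$ limit. Suppose the claim fails, so that there exist $\varepsilon>0$ and a sequence of blocks $B_n\in\mathcal{B}^*(X)$ with $|B_n|=k_n\to\infty$ and $d^*(B_n,\mathcal{M}_\sigma(X))\ge\varepsilon$. For each $n$ choose $x_n\in X$ with $x_n|_{[0,k_n)}=B_n$ and define the (not yet invariant) empirical measure
\begin{equation*}
\mu_{B_n}=\frac{1}{k_n}\sum_{i=0}^{k_n-1}\delta_{\sigma^i(x_n)}.
\end{equation*}
The goal is to compare $B_n$, $\mu_{B_n}$ and a weak-$\star$ limit point of $(\mu_{B_n})$, and derive a contradiction.

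First I would verify that $d^*(B_n,\mu_{B_n})\to 0$. For a cylinder $[C]$ with $|C|=l\le k_n$, the quantity $\mu_{B_n}([C])$ counts the number of $i\in[0,k_n)$ with $x_n|_{[i,i+l)}=C$, divided by $k_n$, whereas $\mathrm{Fr}_{B_n}(C)$ counts only those with $i\in[0,k_n-l]$. The discrepancy sits in the $l-1$ boundary positions, so $\sum_{C\in\Lambda^l}|\mathrm{Fr}_{B_n}(C)-\mu_{B_n}([C])|\le (l-1)/k_n$. Plugging this bound into \eqref{dist_block_measure} and summing the convergent series $\sum_l 2^{-l}(l-1)$ gives an estimate of order $1/k_n$, hence tends to $0$ uniformly in the choice of $B_n$.

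Next I would pass to a weak-$\star$ accumulation point $\nu$ of $(\mu_{B_n})$. Compactness of $\mathcal{M}(X)$ yields such a $\nu\in\mathcal{M}(X)$ along a subsequence, and since the metric $d^*$ on $\mathcal{M}(X)$ is compatible with the weak-$\star$ topology, $d^*(\mu_{B_n},\nu)\to 0$ along that subsequence. The standard Krylov--Bogolyubov computation shows $\nu\in\mathcal{M}_\sigma(X)$: for any continuous $f\colon X\to\mathbb{R}$,
\begin{equation*}
\Bigl|\int f\,d(\sigma_*\mu_{B_n})-\int f\,d\mu_{B_n}\Bigr|=\frac{1}{k_n}\bigl|f(\sigma^{k_n}(x_n))-f(x_n)\bigr|\le\frac{2\|f\|_\infty}{k_n},
\end{equation*}
so $\sigma_*\nu=\nu$. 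Combining with the previous paragraph, $d^*(B_n,\nu)\le d^*(B_n,\mu_{B_n})+d^*(\mu_{B_n},\nu)\to 0$ along the subsequence, contradicting the assumption $d^*(B_n,\mathcal{M}_\sigma(X))\ge\varepsilon$.

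The only delicate step is the first one: bounding $d^*(B_n,\mu_{B_n})$ uniformly requires care because the sum in \eqref{dist_block_measure} runs over all lengths $l$, including $l>k_n$. For $l>k_n$ one has $\mathrm{Fr}_{B_n}(C)=0$ by definition, so the corresponding tail is controlled by $\sum_{l>k_n}2^{-l}\sum_{C\in\Lambda^l}\mu_{B_n}([C])=\sum_{l>k_n}2^{-l}$, which is itself $O(2^{-k_n})$. Together with the $O(1/k_n)$ bound on the head, this gives the uniform estimate $d^*(B,\mu_B)\le c/|B|$ needed to close the argument.
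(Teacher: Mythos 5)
Your proof is correct; the paper omits the proof of this statement, citing it as standard (Fact 6.6.1 of the \emph{Entropy in dynamical systems} monograph), and your argument --- comparing $B_n$ with its empirical measure $\mu_{B_n}$, extracting a weak-$\star$ limit, and verifying invariance of the limit by the Krylov--Bogolyubov computation --- is exactly that standard argument. The quantitative bookkeeping is also right: the $(l-1)/k_n$ boundary discrepancy for $l\le k_n$ and the $O(2^{-k_n})$ tail for $l>k_n$ together give the uniform bound $d^*(B,\mu_B)=O(1/|B|)$ that closes the contradiction.
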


 The connection between blocks and measures enables us to give an (alternative to the general one involving the measures $\frac1n\sum_{i=0}^{n-1}\delta_{\sigma^{i}(x)}$) definition of a generic point, in a symbolic system, for a $\sigma$-invariant measure using the metric $d^*$.
\begin{definition}\label{generic}
	An element $x\in X$ is called \emph{generic} for a measure $\mu\in\mathcal{M}_{\sigma}(X)$ if $$\lim_{n\to+\infty}d^*(x|_{[0,n)}, \mu)=0.$$
\end{definition}

 A lemma similar to the one below can be found (in a more general version) for example in \cite[lemma 2]{Downarowicz}.
\begin{lemma}\label{konkatenacja}
	For every $\varepsilon>0$ and a finite collection of blocks $B_1,...,B_m\in\mathcal{B}^*(X)$, such that for every $j=1,...,m$ there exists a measure $\mu_j\in\mathcal{M}(X)$ satisfying $d^*(B_j,\mu_j)<\varepsilon$, the following inequality holds
	\begin{equation*}
	d^*(B,\mu)<2\varepsilon,
	\end{equation*}
	where $B=B_1...B_m$ is the concatenation of the blocks $B_1,...,B_m$ and $\mu=\sum_{j=1}^{m}\frac{|B_j|}{|B|}\mu_j$.
\end{lemma}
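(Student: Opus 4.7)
The plan is a direct computation based on a counting identity for occurrences of blocks. For a fixed $l\ge 1$ and $C\in\Lambda^l$, I decompose the starting positions $i\in[0,|B|-l+1)$ of length-$l$ windows in the concatenation $B=B_1B_2\cdots B_m$ into \emph{interior} positions, for which the window $B|_{[i,i+l)}$ lies wholly inside some $B_j$, and \emph{straddling} positions, for which the window crosses a boundary between two consecutive blocks $B_j,B_{j+1}$. There are at most $(m-1)(l-1)$ straddling positions; writing $S_C^{(l)}$ for the number of them at which the window reads $C$, this yields
\begin{equation*}
\mathrm{Fr}_B(C)=\sum_{j=1}^m\frac{|B_j|}{|B|}\mathrm{Fr}_{B_j}(C)+\frac{S_C^{(l)}}{|B|},\qquad\sum_{C\in\Lambda^l}S_C^{(l)}\le(m-1)(l-1).
\end{equation*}

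Because $\mu=\sum_j\frac{|B_j|}{|B|}\mu_j$ admits the analogous convex expansion on the measure side (without any correction), subtracting the two formulas and applying the triangle inequality gives
\begin{equation*}
|\mathrm{Fr}_B(C)-\mu([C])|\le\sum_{j=1}^m\frac{|B_j|}{|B|}|\mathrm{Fr}_{B_j}(C)-\mu_j([C])|+\frac{S_C^{(l)}}{|B|}.
\end{equation*}
Summing over $C\in\Lambda^l$ and then over $l\ge 1$ weighted by $2^{-l}$, the first piece collapses to the convex combination $\sum_j\frac{|B_j|}{|B|}d^*(B_j,\mu_j)<\varepsilon$, while the boundary contribution is dominated by $\sum_{l\ge1}2^{-l}\cdot(m-1)(l-1)/|B|=(m-1)/|B|$, using the elementary identity $\sum_{l\ge1}2^{-l}(l-1)=1$.

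The only delicate step is verifying that the boundary term fits inside the slack $\varepsilon$ between the conclusion $d^*(B,\mu)<2\varepsilon$ and the main term bound $\varepsilon$. This follows from a length-lower-bound implicit in the hypothesis: since $\mu_j$ is a full probability while $\mathrm{Fr}_{B_j}(\cdot)$ has total mass only $(|B_j|-l+1)/|B_j|$ on blocks of length $l\le|B_j|$, one has $\sum_{C\in\Lambda^l}|\mathrm{Fr}_{B_j}(C)-\mu_j([C])|\ge(l-1)/|B_j|$, so
\begin{equation*}
\varepsilon>d^*(B_j,\mu_j)\ge\frac{1}{|B_j|}\sum_{l=1}^{|B_j|}2^{-l}(l-1),
\end{equation*}
forcing $|B_j|$ to be of order at least $1/\varepsilon$. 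Consequently $|B|\gtrsim m/\varepsilon$, the straddling contribution $(m-1)/|B|$ is bounded by $\varepsilon$, and the proof closes at $d^*(B,\mu)<2\varepsilon$. The main (in fact only) obstacle is precisely this boundary bookkeeping; the rest is a routine convex combination argument on the space of cylinder frequencies.
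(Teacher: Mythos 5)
The paper does not actually contain a proof of this lemma (it only points to \cite[lemma 2]{Downarowicz}), so the comparison is with the standard argument. Your skeleton is that standard argument and its main components are correct: the interior/straddling decomposition of windows, the identity $\mathrm{Fr}_B(C)=\sum_j\frac{|B_j|}{|B|}\mathrm{Fr}_{B_j}(C)+S_C^{(l)}/|B|$ with $\sum_{C\in\Lambda^l}S_C^{(l)}\le(m-1)(l-1)$, and the convexity bound $\sum_j\frac{|B_j|}{|B|}d^*(B_j,\mu_j)<\varepsilon$ for the main term.

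The gap is in how you absorb the boundary term $(m-1)/|B|$. Your own lower bound gives, for $k=|B_j|$, only $\varepsilon>\frac1k\sum_{l=1}^{k}2^{-l}(l-1)=\frac{1-(k+1)2^{-k}}{k}$, hence $|B_j|>\frac{1}{\varepsilon}\bigl(1-(|B_j|+1)2^{-|B_j|}\bigr)$ and, after summing over $j$, $|B|>\frac{1}{\varepsilon}\bigl(m-\sum_j(|B_j|+1)2^{-|B_j|}\bigr)$. This yields the needed $\frac{m-1}{|B|}\le\varepsilon$ only when $\sum_j(|B_j|+1)2^{-|B_j|}\le1$, which fails when the $B_j$ are short; and in that regime $\varepsilon$ is forced to be large, but not large enough (with all $|B_j|=1$ one gets only $\varepsilon>\frac12$, while $\frac{m-1}{|B|}=\frac{m-1}{m}$ can be close to $1>\varepsilon$). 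Even when all blocks are long the argument delivers only $|B_j|\gtrsim\frac{1}{\varepsilon}(1-\text{small})$, hence $d^*(B,\mu)<\varepsilon+\frac{m-1}{|B|}$ with $\frac{m-1}{|B|}$ slightly above $\varepsilon$, so the chain closes at something like $3\varepsilon$ rather than $2\varepsilon$. The correct way to finish is to dominate the straddling mass by the frequency deficit rather than by an absolute count:
\begin{equation*}
\frac{1}{|B|}\sum_{C\in\Lambda^l}S_C^{(l)}=\frac{(|B|-l+1)-\sum_{j}\max(0,|B_j|-l+1)}{|B|}\le\sum_{j=1}^m\frac{|B_j|}{|B|}\Bigl(1-\frac{\max(0,|B_j|-l+1)}{|B_j|}\Bigr),
\end{equation*}
and the right-hand side equals $\sum_j\frac{|B_j|}{|B|}\sum_{C\in\Lambda^l}\bigl(\mu_j([C])-\mathrm{Fr}_{B_j}(C)\bigr)\le\sum_j\frac{|B_j|}{|B|}\sum_{C\in\Lambda^l}|\mathrm{Fr}_{B_j}(C)-\mu_j([C])|$. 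Feeding this into your triangle inequality gives, for every $l$, $\sum_{C\in\Lambda^l}|\mathrm{Fr}_B(C)-\mu([C])|\le2\sum_j\frac{|B_j|}{|B|}\sum_{C\in\Lambda^l}|\mathrm{Fr}_{B_j}(C)-\mu_j([C])|$, and summing with weights $2^{-l}$ yields $d^*(B,\mu)\le2\sum_j\frac{|B_j|}{|B|}d^*(B_j,\mu_j)<2\varepsilon$ with no assumption on the lengths --- this is exactly where the factor $2$ in the statement comes from.
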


 We finish this section by providing the definitions of upper and lower Banach densities of subsets of $\Z$ or $\N_0$,  and some of their properties.
\begin{definition}\label{densities}
	Fix a set $A\subset \Z$ (resp.\ $A\subset\N_0$). \emph{Upper Banach density} of $A$ is defined as
	\begin{equation}\label{upper_banach_density}
	\overline{d}_{\mathsf{Ban}}(A)=\inf_{N\in\N}\sup_i\frac{|A\cap[i,i+N)|}{N}.
	\end{equation}
	Similarly, we define \emph{lower Banach density} as
	\begin{equation}\label{lower_banach_density}
	\underline{d}_{\mathsf{Ban}}(A)=\sup_{N\in\N}\inf_i\frac{|A\cap[i,i+N)|}{N}.
	\end{equation}
	 If $\overline{d}_{\mathsf{Ban}}(A)=\underline{d}_{\mathsf{Ban}}(A)$, then we denote the common value by $d_{\mathsf{Ban}}(A)$ and call it \emph{Banach density} of $A$.
\end{definition}
\begin{remark}\label{lower_upper}
	It follows directly from the definition of upper and lower Banach densities that for every set $A\subset \Z$ (resp.\ $A\subset \N_0$) we have $\overline{d}_{\mathsf{Ban}}(A)=1-\underline{d}_{\mathsf{Ban}}(A^{\mathsf{c}})$.
\end{remark}
\begin{theorem}
	The upper Banach density is subadditive, that is for every pair of sets $A,B\subset \Z$ (resp.\ $A,B\subset \N_0$) the following inequality holds
	\begin{equation*}
	\overline{d}_{\mathsf{Ban}}(A\cup B)\le \overline{d}_{\mathsf{Ban}}(A)+\overline{d}_{\mathsf{Ban}}(B).
	\end{equation*}
\end{theorem}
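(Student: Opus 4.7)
The plan is to exploit the elementary set-theoretic inequality $|(A\cup B)\cap I| \le |A\cap I| + |B\cap I|$, valid for any interval $I$. Applied with $I = [i, i+N)$ and taking suprema over $i$, this yields, for each fixed $N \in \N$,
$$\sup_i \frac{|(A\cup B)\cap [i,i+N)|}{N} \le \sup_i \frac{|A\cap [i,i+N)|}{N} + \sup_i \frac{|B\cap [i,i+N)|}{N}.$$
If the infima in the definition of $\overline{d}_{\mathsf{Ban}}$ were attained at the same $N$ for both $A$ and $B$, the claim would follow by simply passing to the infimum on both sides. The genuine (albeit mild) difficulty is that the function $N \mapsto \sup_i \frac{|A \cap [i, i+N)|}{N}$ need not be monotone, so one cannot take the infimum termwise; this is the only real obstacle.

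The workaround is a standard subadditivity observation: the sequence $a_N := \sup_i |A \cap [i, i+N)|$ satisfies $a_{N+M} \le a_N + a_M$, as one sees by splitting $[i, i+N+M) = [i, i+N) \cup [i+N, i+N+M)$ and taking suprema. In particular, if $N = mN_0$ for some $m, N_0 \in \N$, then partitioning $[i, i+N)$ into $m$ consecutive subintervals of length $N_0$ gives
$$\sup_i \frac{|A \cap [i, i+N)|}{N} \le \sup_i \frac{|A \cap [i, i+N_0)|}{N_0}.$$
In other words, passing from any scale $N_0$ to an arbitrary multiple of $N_0$ can only decrease (or preserve) the relevant quotient.

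To finish, given $\varepsilon > 0$, I would choose $N_A, N_B \in \N$ with
$$\sup_i \frac{|A \cap [i, i+N_A)|}{N_A} < \overline{d}_{\mathsf{Ban}}(A) + \tfrac{\varepsilon}{2}, \qquad \sup_i \frac{|B \cap [i, i+N_B)|}{N_B} < \overline{d}_{\mathsf{Ban}}(B) + \tfrac{\varepsilon}{2},$$
and set $N := N_A N_B$ (or any common multiple). The common-multiple bound from the second paragraph shows the same inequalities persist at scale $N$ for both $A$ and $B$, and combining this with the pointwise inequality from the first paragraph yields $\sup_i \frac{|(A\cup B) \cap [i, i+N)|}{N} < \overline{d}_{\mathsf{Ban}}(A) + \overline{d}_{\mathsf{Ban}}(B) + \varepsilon$. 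Since $\overline{d}_{\mathsf{Ban}}(A\cup B)$ is the infimum over all $N$, this value cannot exceed the right-hand side, and letting $\varepsilon \to 0$ completes the argument. The one-sided and two-sided cases are identical, as nothing in the argument uses the sign of the indices.
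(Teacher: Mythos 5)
Your proof is correct and complete: the pointwise inequality at a fixed scale $N$, the subadditivity of $N\mapsto\sup_i|A\cap[i,i+N)|$, and the common-multiple trick together correctly handle the only subtle point, namely that the infima for $A$ and $B$ need not be approached at the same $N$. The paper omits the proof of this theorem (all results of its first section are declared standard and left unproved), so there is nothing to compare against; your argument is a valid standard proof, and the alternative route via Fekete's lemma (showing the infimum is actually a limit) would give the same conclusion.
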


 The next lemma, connecting the notions of upper Banach density and invariant measures, is true not only for symbolic systems but for every topological dynamical system with an action of $\Z$ or $\N_0$ on a compact, metric space. Hence we formulate it in this general setup. 
\begin{lemma}\label{upper_banach_density_invariant}
	Let $(X,T)$ be a topological dynamical system with an action of $\Z$ or $\N_0$ consisting of the iterates of a homeomorphism (resp.\ continuous map) $T:X\to X$, where $X$ is a compact metric space, and let $D\subset X$ be a closed set. The following inequality holds
	\begin{equation*}
	\sup_{\mu\in\mathcal{M}_T(X)}\mu(D)\ge \sup_{x\in X}\ \overline{d}_{\mathsf{Ban}}(\{i: T^i(x)\in D\}),
	\end{equation*}
	where $\mathcal{M}_T(X)$ denotes the set of all $T$-invariant, Borel probability measures on $X$.
\end{lemma}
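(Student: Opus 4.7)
The plan is to adapt the standard Krylov--Bogolyubov construction so that the resulting invariant measure carries large mass on $D$. Fix $x\in X$ and write $A=\{i:T^i(x)\in D\}$, $\alpha=\overline{d}_{\mathsf{Ban}}(A)$. By the very definition of upper Banach density as an infimum, for every $N\in\N$ we have $\sup_i|A\cap[i,i+N)|/N\ge\alpha$, so there exists $i_N$ (in $\Z$ or $\N_0$, depending on the setting) satisfying
\begin{equation*}
\frac{|A\cap[i_N,i_N+N)|}{N}\ge\alpha-\frac{1}{N}.
\end{equation*}
Form the empirical measures $\mu_N=\tfrac{1}{N}\sum_{j=0}^{N-1}\delta_{T^{i_N+j}(x)}$. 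By weak-$\star$ compactness of $\M(X)$, extract a subsequence $\mu_{N_k}$ converging to some $\mu\in\M(X)$.

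I would next verify, in parallel, that $\mu$ is $T$-invariant and that $\mu(D)\ge\alpha$. For invariance, observe that for any $f\in C(X)$ a telescoping gives
\begin{equation*}
\int f\circ T\,d\mu_N-\int f\,d\mu_N=\tfrac{1}{N}\bigl(f(T^{i_N+N}(x))-f(T^{i_N}(x))\bigr),
\end{equation*}
whose absolute value is at most $2\|f\|_\infty/N\to0$. Passing to the limit along $N_k$ yields $\int f\circ T\,d\mu=\int f\,d\mu$ for every $f\in C(X)$, so $\mu\in\M_T(X)$. For the mass on $D$, I would use that $D$ is closed together with the Portmanteau theorem, which gives $\mu(D)\ge\limsup_k\mu_{N_k}(D)$; and by construction
\begin{equation*}
\mu_{N_k}(D)=\frac{|A\cap[i_{N_k},i_{N_k}+N_k)|}{N_k}\ge\alpha-\frac{1}{N_k},
\end{equation*}
so $\mu(D)\ge\alpha$. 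Taking the supremum over $x\in X$ produces the desired inequality.

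There is no serious obstacle; the proof is essentially a bookkeeping exercise once the right data is fed into the Krylov--Bogolyubov machine. The only points requiring attention are (i) matching the correct direction of the Portmanteau inequality to the hypothesis that $D$ is closed, so that one obtains the needed \emph{lower} bound on $\mu(D)$, and (ii) using that $\sup_i |A\cap[i,i+N)|/N\ge\alpha$ for \emph{every} $N$ (because $\alpha$ is the infimum over $N$ of these suprema), which is what legitimises the choice of $i_N$ above.
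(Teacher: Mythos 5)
Your proof is correct: the shifted Krylov--Bogolyubov averages $\mu_N=\tfrac1N\sum_{j=0}^{N-1}\delta_{T^{i_N+j}(x)}$, with $i_N$ chosen to nearly realize the supremum defining $\overline{d}_{\mathsf{Ban}}$, together with the closed-set half of the Portmanteau theorem, is exactly the standard argument the paper has in mind when it omits the proof of this lemma. No gaps; the two points you flag at the end are indeed the only places where care is needed, and you handle both correctly.
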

\section{Decomposition of an element of a classical subshift into ergodic blocks}\label{sekcja2}

 We begin with a rigorous definition of the ``ergodic blocks'' alluded to in the introduction.
\begin{definition}
	Let $\U$ be an open set in $\mathcal{B}^*(X)\cup\mathcal{M}(X)$ such that $\U\supset \Me[X]$. A block $B$ is called \emph{$\U$-ergodic} if $B\in\U$. All other blocks are shortly called \emph{nonergodic}.
\end{definition}
\begin{lemma}\label{Dum}
	Let $(X,\sigma)$ be a classical symbolic system and let $\U\supset \Me[X]$ be an open set in $\mathcal{B}^*(X)\cup\mathcal{M}(X)$. For every $m\in\N$, the set
	\begin{equation*}
	D_{\U,m}=\{y\in X: \forall_{k\ge m}\ y|_{[0,k)}\notin \U\}
	\end{equation*}
	is a null-set for every measure $\mu\in\mathcal{M}_{\sigma}(X)$.
\end{lemma}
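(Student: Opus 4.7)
The plan is to reduce to the ergodic case by the ergodic decomposition theorem and then use the definition of a generic point. First, I would observe that $D_{\U,m}$ is Borel: for each $k\ge m$, the set $\{y\in X: y|_{[0,k)}\notin \U\}$ is a finite union of cylinders (namely, the cylinders corresponding to those blocks of length $k$ not belonging to $\U$), so $D_{\U,m}$ is a countable intersection of Borel sets. Moreover, by the ergodic decomposition theorem, every $\mu\in\M_\sigma(X)$ is an integral over $\Me[X]$, so it suffices to prove that $\nu(D_{\U,m})=0$ for every ergodic measure $\nu\in\Me[X]$.

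Fix an ergodic $\nu$. The key step is that $\nu\in\Me[X]\subset\U$, and since $\U$ is open in the metric space $(\mathcal{B}^*(X)\cup\mathcal{M}(X), d^*)$, there exists $\varepsilon>0$ such that the $d^*$-ball of radius $\varepsilon$ around $\nu$ is contained in $\U$. On the other hand, by Birkhoff's pointwise ergodic theorem applied to each cylinder $[C]$ (for $C\in\Lambda^l$, $l\in\N$), we have $\mathrm{Fr}_{y|_{[0,n)}}(C)\to \nu([C])$ for $\nu$-almost every $y$. Summing (via dominated convergence applied to the sum over $l$ weighted by $2^{-l}$) across the countably many blocks $C$ gives $d^*(y|_{[0,n)},\nu)\to 0$ for $\nu$-almost every $y$, i.e.\ $\nu$-a.e.\ point is generic for $\nu$ in the sense of Definition~\ref{generic}.

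Combining the two observations, for $\nu$-almost every $y$ there exists $N_0$ such that $d^*(y|_{[0,n)},\nu)<\varepsilon$ for all $n\ge N_0$. Taking any $k\ge \max(m, N_0)$, we obtain $y|_{[0,k)}\in\U$, so $y\notin D_{\U,m}$. Hence $\nu(D_{\U,m})=0$ for each ergodic $\nu$, and the general case follows by integration.

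The main (mild) obstacle is passing from Birkhoff convergence cylinder-by-cylinder to convergence in the metric $d^*$ on the full combined space, because $d^*$ involves an infinite weighted sum. This is handled by observing that the inner sum $\sum_{C\in\Lambda^l}|\mathrm{Fr}_{y|_{[0,n)}}(C)-\nu([C])|$ is uniformly bounded by $2$ for each $l$, so the weights $2^{-l}$ let one apply dominated convergence to interchange the limit in $n$ with the sum over $l$ on a common full-measure set (the intersection, over the countably many blocks $C$, of the full-measure sets arising from Birkhoff).
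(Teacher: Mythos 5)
Your argument is correct and follows essentially the same route as the paper: reduce to ergodic measures via the ergodic decomposition, use the Birkhoff theorem to produce generic points (you fill in the standard detail of passing from cylinder-wise convergence to convergence in $d^*$), and conclude from openness of $\U$ that such points eventually enter $\U$. The paper phrases this as a proof by contradiction, but the substance is identical.
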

\begin{proof}
	If for some measure $\mu\in\mathcal{M}_{\sigma}(X)$ we had $\mu(D_{\U,m})>0$, then by the ergodic decomposition, there would exist an ergodic measure $\mu_0\in\Me[X]$ such that ${\mu_0(D_{\U,m})>0}$. Hence, by the Birkhoff Ergodic Theorem, in $D_{\U,m}$ there would exist an element $x$, generic for the measure $\mu_0$. Therefore, for some $k\ge m$ we would have $x|_{[0,k)}\in \U$. This would contradict the definition of the set $D_{\U,m}$.
\end{proof}
\begin{lemma}\label{Eumn}
	Let $(X,\sigma)$ be a classical symbolic system, let $\U\supset \Me[X]$ be an open set in $\mathcal{B}^*(X)\cup\mathcal{M}(X)$ and let $m\in\N$. For every $n\ge m$ we define
	\begin{equation*}
	D_{\U,m,n}=\{y\in X: \forall_{k\in [m,n]}\ y|_{[0,k)}\notin \U\}
	\end{equation*}
	and for all $x\in X$ we define
	\begin{equation*}
	E_{\U,m,n,x}=\{i: \sigma^i(x)\in D_{\U,m,n}\}.
	\end{equation*}
	Then the convergence $\lim\limits_{n\to+\infty}\overline{d}_{\mathsf{Ban}}(E_{\U,m,n,x})=0$ holds uniformly on $X$.
\end{lemma}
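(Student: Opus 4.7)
The plan is to dominate $\overline{d}_{\mathsf{Ban}}(E_{\U,m,n,x})$ by a quantity independent of $x$ which vanishes as $n\to+\infty$. This will be done by combining Lemma~\ref{upper_banach_density_invariant} with a Dini-type argument on the compact space $\mathcal{M}_\sigma(X)$.

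First I would observe that each set $D_{\U,m,n}$ is clopen. Indeed, for any fixed $k\in[m,n]$, the set $\{y\in X: y|_{[0,k)}\notin\U\}$ is the finite union $\bigcup_{B\in\Lambda^k\setminus\U}[B]$ of cylinders, hence clopen, and $D_{\U,m,n}$ is the intersection of finitely many such sets. Applying Lemma~\ref{upper_banach_density_invariant} to the closed set $D_{\U,m,n}$ (with $T=\sigma$) yields, for every $x\in X$,
\begin{equation*}
\overline{d}_{\mathsf{Ban}}(E_{\U,m,n,x})\le \sup_{\mu\in\mathcal{M}_\sigma(X)}\mu(D_{\U,m,n}),
\end{equation*}
so it suffices to prove that the right-hand side tends to $0$ as $n\to+\infty$.

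Since each $D_{\U,m,n}$ is clopen, the function $f_n:\mu\mapsto\mu(D_{\U,m,n})$ is continuous with respect to the weak-$\star$ topology on $\mathcal{M}_\sigma(X)$. The inclusions $D_{\U,m,n+1}\subset D_{\U,m,n}$ make $(f_n)_{n\ge m}$ non-increasing, and continuity of measure from above, together with $\bigcap_{n\ge m}D_{\U,m,n}=D_{\U,m}$, gives $f_n(\mu)\searrow \mu(D_{\U,m})$ for every $\mu\in\mathcal{M}_\sigma(X)$. By Lemma~\ref{Dum} this pointwise limit is identically $0$, a continuous function. Weak-$\star$ compactness of $\mathcal{M}_\sigma(X)$ then allows Dini's theorem to upgrade the pointwise convergence $f_n\to 0$ to uniform convergence, which is exactly what is needed.

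The only delicate point is the passage from ``for each $x$'' to ``uniformly in $x$''. Lemma~\ref{Dum} combined with Lemma~\ref{upper_banach_density_invariant} already yields the former via the chain of inequalities above applied to a single $\mu$; uniformity relies on the extra observation that the sets $D_{\U,m,n}$ are not merely closed but clopen, upgrading the functions $f_n$ from upper semicontinuous to continuous so that Dini's theorem can be invoked on the compact simplex $\mathcal{M}_\sigma(X)$.
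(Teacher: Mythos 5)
Your proof is correct and follows essentially the same route as the paper's: clopenness of $D_{\U,m,n}$ gives weak-$\star$ continuity of $\mu\mapsto\mu(D_{\U,m,n})$, Lemma~\ref{Dum} with continuity of measure from above gives pointwise convergence to $0$, Dini's theorem on the compact set $\mathcal{M}_\sigma(X)$ upgrades this to uniform convergence, and Lemma~\ref{upper_banach_density_invariant} transfers the bound to $\sup_x\overline{d}_{\mathsf{Ban}}(E_{\U,m,n,x})$. The only cosmetic difference is that you invoke Lemma~\ref{upper_banach_density_invariant} at the start rather than at the end.
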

\begin{proof}
	The sequence of sets $(D_{\U,m,n})_{n\ge m}$ is obviously nested, hence for every $x\in X$, the sequence $(E_{\U,m,n,x})_{n\ge m}$ is also nested. Thus, the sequence $(\overline{d}_{\mathsf{Ban}}(E_{\U,m,n,x}))_{n\ge m}$ is nonincreasing.
	
	 The set $D_{\U,m,n}=\bigcap_{k=m}^n\{y\in X: y|_{[0,k)}\notin\U\}$ is clopen for every $n\ge m$, thence the function $\Phi_{\U,m,n}(\mu)=\mu(D_{\U,m,n})$ is continuous in the weak-$\star$ topology on $\mathcal{M}(X)$. Moreover, the set $D_{\U,m}$ is the intersection of the sets $D_{\U,m,n}$, $n\ge m$. By lemma~\ref{Dum} and the continuity of measures from above, it follows that $\Phi_{\U,m,n}\to 0$ as $n\to+\infty$, pointwise on $\mathcal{M}_{\sigma}(X)$. Because the sequence of functions $(\Phi_{\U,m,n})_{n\ge m}$ is nonincreasing, by Dini's theorem it tends to~$0$ uniformly on $\mathcal{M}_{\sigma}(X)$. Henceforth $\displaystyle\sup_{\mu\in\mathcal{M}_{\sigma}(X)}\mu(D_{\U,m,n})$ tends to $0$ as $n\to+\infty$. By lemma~\ref{upper_banach_density_invariant}, $\displaystyle\sup_{x\in X}\overline{d}_{\mathsf{Ban}}(E_{\U,m,n,x})\to 0$, hence the functions $x\mapsto \overline{d}_{\mathsf{Ban}}(E_{\U,m,n,x})$ converge to $0$ as~$n\to+\infty$, uniformly on $X$, as desired.
\end{proof}

 Now we formulate and prove one of the key theorems of this paper, concerning a decomposition of a symbolic element into $\U$-ergodic blocks in the case of classical symbolic systems. In section~\ref{sekcja5} this theorem will be generalized to the case of a symbolic system with an action of any countable amenable group.
\begin{theorem}\label{thm1}
	Let $(X,\sigma)$ be a classical symbolic system. Let $\U\supset \Me[X]$ be an open set in $\mathcal{B}^*(X)\cup\mathcal{M}(X)$. For every $m\in\N$ and every $\varepsilon>0$ there exists $n\ge m$ such that, for any element $x\in X$, there exists a representation of $x$ as an infinite concatenation of blocks: $x=\dots B_{-2}A_{-2}B_{-1}A_{-1}A_1B_1A_2B_2\dots$ (resp.\ $x=A_1B_1A_2B_2\ldots$, in the case of a one-sided subshift), where all blocks $B_j$ are $\U$-ergodic and have lengths ranging between $m$ and $n$, and the set $M^{\mathsf{NE}}(x)$ of coordinates pertaining to the blocks $A_j$ in this concatenation has upper Banach density smaller than $\varepsilon$ (some of the blocks $A_j$ may be empty, i.e.\ have length zero).
\end{theorem}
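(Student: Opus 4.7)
The whole construction is driven by Lemma~\ref{Eumn}. The plan is to use it to choose $n\ge m$ so that
\begin{equation*}
\sup_{x\in X}\overline{d}_{\mathsf{Ban}}(E_{\U,m,n,x})<\varepsilon,
\end{equation*}
and then to read the decomposition off a simple greedy algorithm. The point of this choice of $n$ is that whenever $i\notin E_{\U,m,n,x}$, there exists $k\in[m,n]$ such that $\sigma^i(x)|_{[0,k)}\in\U$, i.e.\ a $\U$-ergodic block of admissible length starts at position~$i$.

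I would first handle the one-sided case by the following greedy procedure. Set $p:=0$ and iterate: if $p\notin E_{\U,m,n,x}$, pick the smallest $k\in[m,n]$ with $x|_{[p,p+k)}\in\U$, declare $x|_{[p,p+k)}$ to be the next $B_j$, and set $p\leftarrow p+k$; otherwise append the single coordinate $p$ to the current $A_j$-block (which is allowed to be empty when two $B$-blocks abut) and set $p\leftarrow p+1$. By construction every coordinate assigned to some $A_j$ belongs to $E_{\U,m,n,x}$, so $M^{\mathsf{NE}}(x)\subset E_{\U,m,n,x}$, and the required Banach density bound follows from the monotonicity of $\overline{d}_{\mathsf{Ban}}$.

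For the two-sided case I would apply the same greedy algorithm to $\sigma^{-N}(x)$ for every $N\in\N$; after translating indices by $-N$, this gives a decomposition of $[-N,\infty)$ with the desired properties, whose $A$-coordinates lie in $E_{\U,m,n,x}\cap[-N,\infty)$. Encoding each such decomposition as a function $\chi_N\colon\Z\to\{-1,0,m,m+1,\dots,n\}$ (with $\chi_N(p)=k$ marking the start of a $B$-block of length $k$, $\chi_N(p)=-1$ marking the interior of such a block, $\chi_N(p)=0$ marking an $A$-coordinate, and $\chi_N\equiv 0$ on $(-\infty,-N)$), one gets a sequence in a compact product space. A diagonal subsequence converges pointwise to some $\chi\colon\Z\to\{-1,0,m,\dots,n\}$; all the compatibility conditions (block-length bookkeeping, $\U$-ergodicity of the emitted blocks, and the inclusion of $A$-coordinates in $E_{\U,m,n,x}$) involve only finite windows of $\chi$ and $x$, so they survive pointwise limits.

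The one step that needs a sanity check is that the limit $\chi$ really encodes a bi-infinite concatenation rather than degenerating into a half-infinite $A$-region on the left. This is automatic, since a semi-infinite run of $A$-coordinates would force $\overline{d}_{\mathsf{Ban}}(M^{\mathsf{NE}}(x))=1$, whereas the inclusion $M^{\mathsf{NE}}(x)\subset E_{\U,m,n,x}$ already guarantees $\overline{d}_{\mathsf{Ban}}(M^{\mathsf{NE}}(x))<\varepsilon<1$. Hence the limit yields the required decomposition on all of $\Z$, and no special gluing at the origin is needed.
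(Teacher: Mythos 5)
Your proof is correct and rests on exactly the same engine as the paper's: Lemma~\ref{Eumn} to fix $n$, followed by a left-to-right greedy scan that emits a $\U$-ergodic block of length in $[m,n]$ whenever the current position lies outside $E_{\U,m,n,x}$ and dumps the position into an $A$-block otherwise, so that $M^{\mathsf{NE}}(x)\subset E_{\U,m,n,x}$. The only real divergence is the two-sided case. The paper simply runs the same greedy procedure leftward from the coordinate $0$, invoking $\M_{\sigma^{-1}}(X)=\M_\sigma(X)$; this is shorter but implicitly requires the mirror-image versions of Lemmas~\ref{Dum} and~\ref{Eumn} (controlling positions where no ergodic block \emph{ends}, rather than starts). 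You instead run the greedy from $-N$ for every $N$, encode the resulting markings, and pass to a pointwise limit by compactness; since all the constraints (disjointness and $\U$-ergodicity of the emitted blocks, inclusion of $A$-coordinates in $E_{\U,m,n,x}$) are verified on finite windows, they persist in the limit, and your observation that a semi-infinite $A$-run would force upper Banach density $1$ rules out the only possible degeneration. Your route is longer but uses only the lemmas as literally stated and sidesteps the reversed-shift bookkeeping; the paper's route is more economical at the cost of that (routine) symmetric argument. Either is a complete proof.
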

\begin{proof}
	By lemma~\ref{Eumn}, there exists $n\ge m$ such that $\sup_{x\in X}\overline{d}_{\mathsf{Ban}}(E_{\U,m,n,x})<\varepsilon$. Hence, for every $x\in X$, the set $(E_{\U,m,n,x})^{\mathsf{c}}=\{i: \sigma^i(x)\notin D_{\U,m,n}\}$ has positive lower Banach density (in particular it is infinite). The desired decomposition of an element $x\in X$ can be obtained as~follows:
	
	 We find the smallest $i\ge 0$ such that $\sigma^i(x)\notin D_{\U,m,n}$ and denote it by $i_1$. We define $A_1=x|_{[0,i_1)}$. There exists $n_1\in[m,n]$ such that $\sigma^{i_1}(x)|_{[0,n_1)}\in \U$. We define $B_1$ as the block $x|_{[i_1,i_1+n_1)}$. Next, we find the smallest $i\ge i_1+n_1$ such that $\sigma^i(x)\notin D_{\U,m,n}$ and denote it by~$i_2$. We define $A_2=x|_{[i_1+n_1,i_2)}$. As before, there exists $n_2\in[m,n]$ satisfying $\sigma^{i_2}(x)|_{[0,n_2)}\in \U$. We define $B_2=x|_{[i_2,i_2+n_2)}$. In the same way we define the blocks $A_j$ and $B_j$ for $j\ge 3$. In case of the action of $\Z$, the blocks with negative indices~$j$ are defined analogously, proceeding to the left from the coordinate 0, using the fact that $\M_{\sigma^{-1}}(X)=\M_{\sigma}(X)$.
	
	 For every index $j$ we have $B_j\in\U$ and $m\le n_j\le n$. Furthermore, $$M^{\mathsf{NE}}(x)=[0,i_1)\cup\bigcup_{j}[i_j+n_j,i_{j+1})$$ is contained in $E_{\U,m,n,x}$. Henceforth, by lemma~\ref{Eumn}, this set has upper Banach density smaller than $\varepsilon$.
\end{proof}

 The above theorem allows one to deduce a finitistic result announced in the introduction.
\begin{theorem}\label{finitistic}
	Let $(X,\sigma)$ be a classical symbolic system and let $\U\supset \Me[X]$ be an open set in $\mathcal{B}^*(X)\cup\M(X)$. For every $\varepsilon>0$ there exists $n_0\in\N$ such that every block $B\in\mathcal{B}^*(X)$ of length at least $n_0$ may be decomposed as a concatenation of blocks: $B=A_1B_1A_2B_2\dots A_rB_rA_{r+1}$, such that the blocks $B_1,\dots,B_r$ are $\U$-ergodic and $\sum_{j=1}^{r}|B_j|\ge (1-\varepsilon)|B|$.
\end{theorem}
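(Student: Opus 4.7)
The plan is to derive this finitistic theorem from Theorem \ref{thm1} by a compactness argument that upgrades the asymptotic upper Banach density bound to a uniform-in-$x$ single-window bound, and then to run the greedy procedure of Theorem \ref{thm1} inside the finite window $[0, |B|)$.

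First, fix $m = 1$ and inspect the proof of Lemma \ref{Eumn}: the continuous functions $\Phi_{\U,m,n}(\mu) = \mu(D_{\U,m,n})$ tend to $0$ uniformly on the compact space $\M_{\sigma}(X)$ as $n \to \infty$. Choose $n \geq m$ so that $\sup_{\mu \in \M_{\sigma}(X)} \mu(D_{\U,m,n}) < \varepsilon/4$. Next, I claim there exists $N \in \N$ such that for every $x \in X$ and every $i$,
\[
\frac{1}{N}\bigl|\{j \in [i, i+N) : \sigma^j(x) \in D_{\U,m,n}\}\bigr| < \frac{\varepsilon}{2}.
\]
Let $a_N = \sup_{x \in X} \frac{1}{N}\sum_{j=0}^{N-1}\mathbf{1}_{D_{\U,m,n}}(\sigma^j x)$; by shift-invariance this also equals the sup above over $i$. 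The sequence $(Na_N)$ is subadditive, so $a_N \to \inf_N a_N$. If this limit exceeded $\varepsilon/2$, picking $x_N$ nearly achieving $a_N$ and passing to a weak-$\star$ accumulation point of the empirical measures $\frac{1}{N}\sum_{j=0}^{N-1}\delta_{\sigma^j x_N}$ would yield a $\sigma$-invariant measure with $\mu(D_{\U,m,n}) \geq \varepsilon/2$ (since $D_{\U,m,n}$ is clopen, its indicator is continuous), contradicting the previous step.

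Set $n_0 = \lceil 4(n + N)/\varepsilon \rceil$ and fix any $B \in \mathcal{B}^*(X)$ with $L := |B| \geq n_0$. Choose $x \in X$ with $x|_{[0,L)} = B$, and run the greedy construction from the proof of Theorem \ref{thm1}, restricted to $[0, L)$: at each step find the smallest unused position $i \leq L - n$ with $\sigma^i(x) \notin D_{\U,m,n}$, pick $n_j \in [m, n]$ with $x|_{[i, i + n_j)} \in \U$, and set $B_j = x|_{[i, i + n_j)}$; stop when no such $i$ exists. This produces $B = A_1 B_1 \cdots A_r B_r A_{r+1}$ with all $B_j$ being $\U$-ergodic. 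By construction, every position in $\bigcup_j A_j$ either lies in $E_{\U,m,n,x} \cap [0, L)$ or in the final tail $[L - n, L)$, so
\[
\sum_{j=1}^{r+1} |A_j| \;\leq\; \bigl|E_{\U,m,n,x} \cap [0, L)\bigr| + n.
\]
Partitioning $[0, L)$ into $\lceil L/N \rceil$ consecutive windows of length $N$ and applying the uniform window bound gives $|E_{\U,m,n,x} \cap [0, L)| \leq (\varepsilon/2)L + N$, whence $\sum_j |A_j| \leq (\varepsilon/2)L + N + n \leq \varepsilon L$ by the choice of $n_0$. Equivalently, $\sum_{j=1}^r |B_j| \geq (1 - \varepsilon)L$.

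I expect the main obstacle to be step two: Lemma \ref{Eumn} on its own supplies, for each $x$, only a window size $N_x$ depending on $x$, while the finitistic statement needs a single $N$ valid for every element of $X$ simultaneously. The key point making the uniformization possible is that $D_{\U,m,n}$ is clopen, so that a subadditive weak-$\star$ compactness argument converts the bound on $\sup_\mu \mu(D_{\U,m,n})$ into a uniform bound on empirical frequencies; this is what allows the infinite-decomposition result of Theorem \ref{thm1} to be promoted to a statement about arbitrarily long \emph{finite} blocks.
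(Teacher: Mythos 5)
Your proof is correct, but it takes a genuinely different route from the paper's. The paper argues by contradiction: it concatenates a hypothetical sequence of non-decomposable blocks $C_s$ into an auxiliary point $x=C_1C_2C_3\dots$, passes to the enlarged subshift $X'=X\cup\overline{O_{\sigma}(x)}$ (asserting that it carries the same invariant measures as $X$), applies theorem~\ref{thm1} to $X'$, and reads off a forbidden decomposition of $C_s$ from the resulting decomposition of $x$. You instead prove a uniform finite-window strengthening of lemma~\ref{Eumn} --- a single $N$ such that every length-$N$ window of every $x\in X$ meets $E_{\U,m,n,x}$ in density less than $\tfrac{\varepsilon}2$ --- and then run the greedy construction of theorem~\ref{thm1} inside $[0,|B|)$. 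Your Krylov--Bogolyubov argument for the uniformization is sound (a weak-$\star$ accumulation point of empirical measures along near-maximizing windows is invariant and, since $D_{\U,m,n}$ is closed, receives at least the limsup of the window frequencies); the Fekete subadditivity step is harmless but not actually needed, since the existence of one good $N$ already follows from $\inf_N a_N\le\sup_{\mu}\mu(D_{\U,m,n})<\tfrac{\varepsilon}4$. The bookkeeping in the finite window (the tail of length $n$, the additive $N$ from partitioning into windows, and the choice $n_0=\lceil 4(n+N)/\varepsilon\rceil$) checks out. What your route buys: it is direct rather than by contradiction, it avoids the auxiliary subshift $X'$ together with the unproved claim that $\M_{\sigma}(X')=\M_{\sigma}(X)$ (and the attendant need to extend $\U$ to $\mathcal{B}^*(X')$), and it produces as a by-product the uniform ``horizon'' $N_0$ that the paper only extracts afterwards in the remark following theorem~\ref{finitistic}. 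What the paper's route buys: it reuses theorem~\ref{thm1} as a black box and needs no additional measure-theoretic input beyond the definition of upper Banach density.
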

\begin{proof}
	It suffices to consider a one-sided subshift $X$. By contradiction, suppose there exist $\varepsilon>0$ and a sequence of blocks $(C_s)_{s\in\N}$ such that the lengths $|C_s|$ increase to $+\infty$, and for each $s\in\N$ the block $C_s$ cannot be decomposed as described in the formulation of the theorem. Let $x$ be the infinite concatenation $x=C_1C_2C_3\dots$. Although $x$ need not belong to $X$, the symbolic system $X'=X\cup\overline{O_{\sigma}(x)}$, where $\overline{O_{\sigma}(x)}$ denotes the orbit-closure of the element $x$, has the same collection of invariant measures as $X$. Fix $m\in\Z$. By theorem~\ref{thm1} (applied to $X'$), there exists $n\ge m$ such that $x$ may be decomposed as the infinite concatenation of blocks $x=A_1B_1A_2B_2\dots$, where all blocks $B_j$ are $\U$-ergodic and have lengths ranging between $m$ and $n$, and the set $M^{\mathsf{NE}}(x)$ of coordinates pertaining to the blocks $A_j$ has upper Banach density smaller than $\varepsilon$. By the definition of upper Banach density, there exists $N_0\in\N$ such that for every $N\ge N_0$ and every $i$ we have
	\begin{equation*}
	\frac{|M^{\mathsf{NE}}(x)\cap [i,i+N)|}{N}<\frac{\varepsilon}2.
	\end{equation*}
	
	 For $s$ sufficiently large, we have $|C_s|\ge \max\{N_0,\tfrac{4n}{\varepsilon}\}$. Let $t_s$ be such that ${x|_{[t_s,t_s+|C_s|)}=C_s}$. Let $l$ and $r$ be such that $B_l$ and $B_{l+r}$ are the first and the last of the $\U$-ergodic blocks in the concatenation $A_1B_1A_2B_2\dots$ representing $x$, entirely covered by the block $C_s$. It is now elementary to see that
	\begin{equation}\label{impossible}
	C_{s}=\tilde{A}_lB_{l}A_{l+1}B_{l+1}\dots A_{l+r}B_{l+r}\tilde{A}_{l+r+1},
	\end{equation}
	where $\tilde{A}_l$ is a subblock of the block $B_{l-1}A_l$ and $\tilde{A}_{l+r+1}$ is a subblock of $A_{l+r+1}B_{l+r+1}$. Recall that $|B_{l-1}|+|B_{l+r+1}|\le 2n$. Hence, the $\U$-ergodic subblocks of $C_s$, that is $B_l,\dots,B_{l+r}$, satisfy the inequality
	\begin{equation*}
	\frac{1}{|C_s|}\sum_{j=l}^{l+r}|B_j|\ge 1-\frac{|M^{\mathsf{NE}}(x)\cap [t_s,t_s+|C_s|)|}{|C_s|}-\frac{2n}{|C_s|}>1-\varepsilon.
	\end{equation*}
	Finally note that the blocks $B_l,\dots,B_{l+r}$ appear not only in $X'$ but, as subblocks 
	of $C_s$, also in $X$. Thus the formula~\ref{impossible} gives a decomposition of $C_s$ in a way assumed to be impossible. This contradiction ends the proof.
\end{proof}
\begin{remark}
	Theorem~\ref{finitistic} implies that in theorem~\ref{thm1} there exists an independent of $x\in X$ ``horizon'' $N_0$ allowing to determine that the upper Banach density of $M^{\mathsf{NE}}(x)$ is smaller than~$\varepsilon$.
\end{remark}
\begin{theorem}\label{unbounded}
		Let $(X,\sigma)$ be a classical symbolic system. Let $\U\supset \Me[X]$ be an open set in $\mathcal{B}^*(X)\cup\mathcal{M}(X)$. For every $m\in\N$ and $x\in X$ there exists a representation of $x$ as an infinite concatenation of finite blocks: $x=\dots B_{-2}A_{-2}B_{-1}A_{-1}A_1B_1A_2B_2\dots$ (resp.\ $x=A_1B_1A_2B_2\ldots$, in the case of a one-sided subshift), where all blocks $B_j$ are $\U$-ergodic and have lengths larger than or equal to $m$ (without an upper bound on the lengths) and and the set $M^{\mathsf{NE}}(x)$ of coordinates pertaining to the blocks $A_j$ in this concatenation has Banach density equal to $0$.
\end{theorem}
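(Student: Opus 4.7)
The plan is to push the $n \to \infty$ limit in Lemma \ref{Eumn} by working with a single closed set. Let
\[
D_{\U,m} := \bigcap_{k \geq m}\{y \in X : y|_{[0,k)} \notin \U\},
\]
which is closed because each of its factors is a finite union of cylinders over the (finitely many) blocks of length $k$ outside $\U$. Define the \emph{forward bad set}
\[
F(x) := \{i : \sigma^i(x) \in D_{\U,m}\}.
\]
By Lemma \ref{Dum} we have $\mu(D_{\U,m}) = 0$ for every $\mu \in \M_\sigma(X)$, so Lemma \ref{upper_banach_density_invariant} applied to the closed set $D_{\U,m}$ yields $\overline{d}_{\mathsf{Ban}}(F(x)) = 0$ for every $x \in X$. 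By Remark \ref{lower_upper}, the complement $F(x)^{\mathsf{c}}$ is therefore syndetic.

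From here I would construct the decomposition by a greedy rightward sweep. Starting at position $p = 0$: if $p \in F(x)$, set $p' := \min\{q > p : q \notin F(x)\}$, record $x|_{[p, p')}$ as an $A$-block, and advance $p$ to $p'$; if $p \notin F(x)$, pick any $k \geq m$ (say the smallest) with $x|_{[p, p+k)} \in \U$, record $x|_{[p, p+k)}$ as a $B$-block, and advance $p$ to $p + k$. Empty $A$-blocks are inserted whenever needed to maintain the $A_jB_j$ alternation. Each $B$-block lies in $\U$ and has length $\geq m$; the coordinates of any $A$-block $x|_{[p, p')}$ all lie in $F(x)$, since $p \in F(x)$ and positions $p+1, \dots, p'-1$ lie in $F(x)$ by minimality of $p'$. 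Hence $M^{\mathsf{NE}}(x) \subset F(x)$ has Banach density $0$.

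For the two-sided case, the same scheme applied to the reversed dynamics $(X, \sigma^{-1})$ yields a backward closed set $D^-_{\U,m} := \bigcap_{l \geq m}\{y : y|_{[-l, 0)} \notin \U\}$ and a backward bad set $F^-(x) := \{p : \sigma^p(x) \in D^-_{\U,m}\}$ of Banach density $0$ (the analogue of Lemma \ref{Dum} uses backward Birkhoff, valid since $\M_{\sigma^{-1}}(X) = \M_\sigma(X)$ with the same ergodic decomposition). A mirror-image greedy sweep on $(-\infty, 0)$, placing each $B$-block so that it ends at the largest available position outside $F^-(x)$, produces $A$-blocks $x|_{[a, q)}$ with $a \notin F^-(x)$ but $a+1, \dots, q-1 \in F^-(x)$ by maximality; consequently their coordinates lie in $F^-(x) \cup (F^-(x) - 1)$. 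Subadditivity of upper Banach density then gives
\[
\overline{d}_{\mathsf{Ban}}(M^{\mathsf{NE}}(x)) \leq \overline{d}_{\mathsf{Ban}}(F(x)) + \overline{d}_{\mathsf{Ban}}(F^-(x)) + \overline{d}_{\mathsf{Ban}}(F^-(x) - 1) = 0.
\]
The only real bookkeeping point is this lone boundary coordinate of each backward $A$-block, absorbed by the shifted set $F^-(x) - 1$; I do not anticipate any deeper obstacle.
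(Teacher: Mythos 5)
Your proposal is correct and follows essentially the same route as the paper: the set $D_{\U,m}$ and the greedy sweep are exactly the paper's construction, and your two-sided bookkeeping (reversed dynamics, the extra translate $F^-(x)-1$) is a careful spelling-out of what the paper dismisses with ``proceeding to the left from the coordinate $0$''. The only difference is that you obtain $\overline{d}_{\mathsf{Ban}}(F(x))=0$ by applying Lemma~\ref{upper_banach_density_invariant} directly to the closed set $D_{\U,m}$ together with Lemma~\ref{Dum}, whereas the paper writes $E_{\U,m,x}=\bigcap_{n\ge m}E_{\U,m,n,x}$ and invokes Lemma~\ref{Eumn}; both are valid, and yours bypasses the Dini/uniformity argument, which is not needed for this statement.
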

\begin{proof}
	We define the set
	$D_{\U,m}=\{y\in X: \forall_{k\ge m}\ y|_{[0,k)}\notin \U\}$ and, for every point $x\in X$, we define the set $E_{\U,m,x}=\{i: \sigma^i(x)\in D_{\U,m}\}$.
	Observe that $D_{\U,m}=\bigcap_{m\ge n}D_{\U,m,n}$. Hence, for every $x\in X$ we have $E_{\U,m,x}=\bigcap_{n\ge m}E_{\U,m,n,x}$. So $\overline{d}_{\mathsf{Ban}}(E_{\U,m})\le \inf_{n\ge m}\overline{d}_{\mathsf{Ban}}(E_{\U,m,n})=0$.
	The construction of the required decomposition of an element $x\in X$ is a straightforward modification of that in the proof of theorem~\ref{thm1}, consisting in replacing the set $E_{\U,m,n,x}$ by~$E_{\U,m,x}$.
\end{proof}

 In the special case, when $\M_{\sigma}(X)$ is a Bauer simplex, that is, the set of ergodic measures $\Me[X]$ is closed in $\mathcal{M}(X)$, we will prove a stronger version of theorem~\ref{thm1}. The strengthening consists in replacing the phrase ``\textbf{there exists} a representation of $x$ as an infinite concatenation of finite blocks'' with ``\textbf{for any} representation of $x$ as an infinite concatenation of sufficiently long blocks''. In the proof we use the following two lemmas concerning compact, convex sets in locally-convex, metric vector spaces. Rather standard proofs of these lemmas are omitted.
\begin{lemma}\label{barycenter_mass}
	Let $\M$ be a compact, convex subset of a locally-convex, metric vector space $\mathbb{V}$ and let $d^*$ denote a convex metric on $\mathbb{V}$. Let $\mu_0$ be an extreme point of $\M$. For every $\varepsilon>0$ there exists $\gamma>0$ such that for every $\mu=\int_{\M}\nu\mathrm{d}\xi(\nu)$ for some Borel probability measure $\xi$ on $\M$ (that is $\mu$ is a so-called \emph{barycenter} of the measure $\xi$), the following implication holds 
	\begin{equation*}
	d^*(\mu,\mu_0)<\gamma \Rightarrow \xi\bigl(\M\setminus \mathsf{Ball}(\mu_0,\varepsilon)\bigr)<\varepsilon.
	\end{equation*}
\end{lemma}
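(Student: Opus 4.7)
I would argue by contradiction, combining weak-$\star$ compactness of the space of Borel probabilities on $\M$ with Milman's converse to the Krein--Milman theorem. Assume the statement fails, so that for some $\varepsilon>0$ there is a sequence of Borel probabilities $\xi_n$ on $\M$ whose barycenters $\mu_n$ satisfy $d^*(\mu_n,\mu_0)<1/n$ yet $\xi_n\bigl(\M\setminus\mathsf{Ball}(\mu_0,\varepsilon)\bigr)\ge\varepsilon$ for every $n$. Since $\M$ is a compact metric space, the set of Borel probabilities on $\M$ is compact in the weak-$\star$ topology, so after passing to a subsequence we may assume $\xi_n\to\xi$ weakly. The barycenter operation is weak-$\star$ continuous: for every continuous linear functional $\phi$ on $\mathbb{V}$ one has $\phi(\mathrm{bar}(\xi_n))=\int\phi\,d\xi_n\to\int\phi\,d\xi=\phi(\mathrm{bar}(\xi))$, and since such functionals separate points (Hahn--Banach) and the weak topology coincides with the original topology on the compact set $\M$, this yields $\mathrm{bar}(\xi)=\mu_0$. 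Writing $F:=\M\setminus\mathsf{Ball}(\mu_0,\varepsilon)$, which is closed, the Portmanteau theorem gives $\xi(F)\ge\limsup_n\xi_n(F)\ge\varepsilon$.

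Put $t:=\xi(F)\in[\varepsilon,1]$. If $t<1$, decompose $\xi=t\xi_1+(1-t)\xi_2$, where $\xi_1,\xi_2$ are the normalized restrictions of $\xi$ to $F$ and $\M\setminus F$; their barycenters $\nu_1,\nu_2\in\M$ satisfy $\mu_0=t\nu_1+(1-t)\nu_2$, and extremality of $\mu_0$ forces $\nu_1=\nu_2=\mu_0$. If instead $t=1$, then $\xi$ itself is a probability supported on $F$ with barycenter $\mu_0$. In either case, $\mu_0$ is the barycenter of a probability measure supported on the closed set $F$, hence $\mu_0\in\overline{\mathrm{conv}}(F)$.

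I would conclude by invoking Milman's theorem: the extreme points of the compact convex set $\overline{\mathrm{conv}}(F)$ lie in $\overline{F}=F$. Since $\mu_0\in\overline{\mathrm{conv}}(F)\subseteq\M$ is extreme in the larger set $\M$, it is a fortiori extreme in $\overline{\mathrm{conv}}(F)$; hence $\mu_0\in F$, contradicting $\mu_0\notin F$. The main obstacle is simply verifying that Milman's converse to Krein--Milman applies cleanly in the stated locally convex, metric setting (it does, since $\mathbb{V}$ is a Hausdorff locally convex space and $\overline{\mathrm{conv}}(F)$ is compact as a closed subset of $\M$); the rest is routine weak-$\star$ compactness and barycenter bookkeeping. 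The convexity of $d^*$ plays no role here beyond guaranteeing that $\mathsf{Ball}(\mu_0,\varepsilon)$ is a well-behaved open set; convexity of the metric is used elsewhere to ensure that barycenters inherit proximity bounds.
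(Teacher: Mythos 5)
Your argument is correct. Note that the paper itself gives no proof of this lemma (it is dismissed as ``rather standard''), so there is nothing to compare against; judged on its own, your compactness argument is complete: the weak-$\star$ limit $\xi$ of the $\xi_n$ has barycenter $\mu_0$ (by separation of points via continuous linear functionals, using that $\mu_n\to\mu_0$ in the metric, which we must read as inducing the topology of $\mathbb V$), the Portmanteau inequality for the closed set $F=\M\setminus\mathsf{Ball}(\mu_0,\varepsilon)$ gives $\xi(F)\ge\varepsilon$, the splitting $\xi=t\xi_1+(1-t)\xi_2$ together with extremality forces $\mu_0$ to be the barycenter of a probability measure carried by $F$, and Milman's partial converse to Krein--Milman then yields the contradiction $\mu_0\in F$. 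One remark: the last two steps can be packaged more economically by invoking Bauer's characterization of extreme points (a point $\mu_0$ of a compact convex set is extreme if and only if $\delta_{\mu_0}$ is the \emph{only} probability measure with barycenter $\mu_0$); applied to $\xi_1$, which is carried by $F\not\ni\mu_0$, this gives the contradiction at once and avoids discussing $\overline{\mathrm{conv}}(F)$. Your closing observation is also accurate: convexity of $d^*$ is not needed for this lemma, only for the places in the paper where barycenters must inherit metric estimates.
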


 When the set of extreme points of $\M$ is closed, lemma~\ref{barycenter_mass} may be strengthened as follows.
\begin{lemma}\label{barycenter_compact}
	Let $\M$ be a compact, convex subset of a locally-convex, metric vector space $\mathbb{V}$ and let $d^*$ denotes a convex metric on $\mathbb{V}$. Assume that the set $\mathrm{ex}(\M)$ of extreme points of $\M$ is closed. Then, for every $\varepsilon>0$, there exists $\gamma>0$ such that for every pair $\mu,\mu_0$, where $\mu_0\in\mathrm{ex}(\M)$ and $\mu=\int_{\M}\nu\mathrm{d}\xi(\nu)$ for some Borel probability measure $\xi$ on $\M$, the following implication is true
	\begin{equation*}
	d^*(\mu,\mu_0)<\gamma\Rightarrow\xi\bigl(\M\setminus \mathsf{Ball}(\mu_0,\varepsilon)\bigr)<\varepsilon.
	\end{equation*}
\end{lemma}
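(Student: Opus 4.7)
I would argue by contradiction, leveraging the uniform compactness now available from the closure of $\mathrm{ex}(\M)$, to upgrade lemma~\ref{barycenter_mass} from a pointwise statement (each $\mu_0$ gets its own $\gamma(\mu_0)$) to a uniform one. Fix $\varepsilon>0$ and suppose no such $\gamma$ exists. Then there exist sequences $(\mu_n)$, $(\mu_{0,n})$ in $\M$ and Borel probability measures $(\xi_n)$ on $\M$ with $\mu_{0,n}\in\mathrm{ex}(\M)$, $\mu_n=\int_{\M}\nu\,d\xi_n(\nu)$, $d^*(\mu_n,\mu_{0,n})<1/n$, yet $\xi_n(\M\setminus\mathsf{Ball}(\mu_{0,n},\varepsilon))\ge\varepsilon$ for every $n$.

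The first step is a double compactness extraction. Since $\mathrm{ex}(\M)$ is closed in the compact set $\M$, it is itself compact, so passing to a subsequence we may assume $\mu_{0,n}\to\mu_0\in\mathrm{ex}(\M)$; combined with $d^*(\mu_n,\mu_{0,n})\to 0$ this forces $\mu_n\to\mu_0$. The space of Borel probability measures on $\M$ is compact in the weak-$\star$ topology, so extracting once more we may assume $\xi_n\to\xi$ weakly. Weak-$\star$ continuity of the barycenter map then identifies the barycenter of $\xi$ with $\mu_0$; in particular, $\mu_0=\int_{\M}\nu\,d\xi(\nu)$.

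Next I would combine a moving-ball inclusion with the portmanteau theorem. Once $n$ is large enough that $d^*(\mu_{0,n},\mu_0)<\varepsilon/2$, the triangle inequality gives $\mathsf{Ball}(\mu_0,\varepsilon/2)\subset\mathsf{Ball}(\mu_{0,n},\varepsilon)$, whence
\begin{equation*}
\xi_n\bigl(\M\setminus\mathsf{Ball}(\mu_0,\varepsilon/2)\bigr)\ge \xi_n\bigl(\M\setminus\mathsf{Ball}(\mu_{0,n},\varepsilon)\bigr)\ge\varepsilon.
\end{equation*}
Because $\M\setminus\mathsf{Ball}(\mu_0,\varepsilon/2)$ is closed, portmanteau yields $\xi(\M\setminus\mathsf{Ball}(\mu_0,\varepsilon/2))\ge\varepsilon$. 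But $\mu_0\in\mathrm{ex}(\M)$ is itself the barycenter of $\xi$, and trivially $d^*(\mu_0,\mu_0)=0$, so lemma~\ref{barycenter_mass} applied at $\mu_0$ with parameter $\varepsilon/2$ delivers $\xi(\M\setminus\mathsf{Ball}(\mu_0,\varepsilon/2))<\varepsilon/2$, contradicting the previous inequality.

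The main obstacle is the passage to the limit in the identity $\mu_n=\int\nu\,d\xi_n(\nu)$, i.e.\ the weak-$\star$ continuity of the barycenter map, which is standard for compact convex subsets of locally-convex metric spaces but is the only nontrivial analytic ingredient beyond lemma~\ref{barycenter_mass}. The hypothesis that $\mathrm{ex}(\M)$ is closed enters exactly once but indispensably: without it, the cluster point $\mu_0$ of $(\mu_{0,n})$ need not lie in $\mathrm{ex}(\M)$, and lemma~\ref{barycenter_mass} could not be invoked at $\mu_0$—reflecting the genuine failure of uniformity in the general (non-Bauer) case.
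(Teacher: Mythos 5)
Your argument is correct: the contradiction/compactness scheme (extract $\mu_{0,n}\to\mu_0\in\mathrm{ex}(\M)$ using closedness of $\mathrm{ex}(\M)$, extract $\xi_n\to\xi$ weakly, identify $\mu_0$ as the barycenter of $\xi$ by testing against continuous linear functionals, then combine the moving-ball inclusion, portmanteau, and lemma~\ref{barycenter_mass} at the limit point) is exactly the standard uniformization of lemma~\ref{barycenter_mass} that the authors have in mind; the paper itself omits the proof as ``rather standard,'' so there is nothing to contrast it with. All steps check out, including the one place where closedness of $\mathrm{ex}(\M)$ is genuinely needed, which you correctly isolate.
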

\begin{theorem}\label{thm2}
	Let $(X,\sigma)$ be a classical symbolic system such that $\M_{\sigma}(X)$ is a Bauer simplex. Let $\U\supset \Me[X]$ be an open set in $\mathcal{B}^*(X)\cup\mathcal{M}(X)$. Then, for every $\varepsilon>0$, there exists $k_0\in\N$ such that for every $x\in X$ and any decomposition $x=\dots C_{-2}C_{-1}C_1C_2\dots$ (resp.\ $x=C_1C_2\dots$, in the case of a one-sided subshift) into blocks $C_j$ of lengths larger than or equal to $k_0$ and bounded from above, the set of coordinates pertaining to nonergodic (i.e.\ which are not $\U$-ergodic) blocks $C_j$ in the above concatenation, has upper Banach density smaller than $\varepsilon$.
\end{theorem}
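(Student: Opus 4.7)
The plan is to argue by contradiction, exploiting the Bauer hypothesis through Lemma~\ref{barycenter_compact} and the continuity of the ergodic decomposition, in combination with the existence-style Theorem~\ref{finitistic}.

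First I would use compactness of $\mathcal{M}^{\mathsf{erg}}_\sigma(X)$ (closed by the Bauer hypothesis) and openness of~$\mathcal{U}$ to find $\eta>0$ so that the open $\eta$-neighbourhood of $\mathcal{M}^{\mathsf{erg}}_\sigma(X)$ in $\mathcal{B}^*(X)\cup\mathcal{M}(X)$ lies inside $\mathcal{U}$; passing to this smaller neighbourhood only enlarges the set of nonergodic blocks, so we may assume $C$ is nonergodic iff $d^*(C,\mathcal{M}^{\mathsf{erg}}_\sigma(X))\ge\eta$. Next, suppose the conclusion fails: for some $\varepsilon>0$ and arbitrarily large $k_0$, pick $x\in X$ and a decomposition $x=\dots C_{-1}C_1C_2\dots$, $|C_j|\in[k_0,K]$, whose nonergodic coordinates have upper Banach density at least~$\varepsilon$. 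By the definition of upper Banach density, one extracts a sub-block $B=C_{j_1}\cdots C_{j_m}$ of length $N'$ (arbitrarily large) in which nonergodic blocks cover a fraction $\ge\varepsilon/2$ of the coordinates.

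Theorem~\ref{blocks_close_to_measures} produces $\mu\in\mathcal{M}_\sigma(X)$ close in $d^*$ to $B$, and Lemma~\ref{konkatenacja} places $B$ close to $\hat\mu:=\sum_k(|C_{j_k}|/N')\mu_{C_{j_k}}$; hence $\mu\approx\hat\mu$. For $k_0$ large, each nonergodic $C_{j_k}$ has $d^*(\mu_{C_{j_k}},\mathcal{M}^{\mathsf{erg}}_\sigma(X))\ge\eta/2$. Viewing $\hat\mu$ as the barycenter of the discrete measure $\sum_k(|C_{j_k}|/N')\delta_{\mu_{C_{j_k}}}$ on $\mathcal{M}_\sigma(X)$ and applying Lemma~\ref{barycenter_compact} with $\varepsilon'<\min(\eta/2,\varepsilon/2)$ yields the corresponding $\gamma>0$ and forces $d^*(\mu,\mathcal{M}^{\mathsf{erg}}_\sigma(X))\ge\gamma/2$; otherwise the lemma would concentrate the representing discrete measure on an $\varepsilon'$-ball disjoint from the nonergodic atoms, contradicting their total weight of at least~$\varepsilon/2$.

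In parallel I would apply Theorem~\ref{finitistic} to~$B$ with accuracy $\varepsilon_B\ll\varepsilon$, producing a decomposition $B=A_1B_1\cdots A_rB_rA_{r+1}$ with each $B_i$ $\mathcal{U}$-ergodic (so $\mu_{B_i}$ lies near some $\nu_i\in\mathcal{M}^{\mathsf{erg}}_\sigma(X)$) and $\sum|B_i|\ge(1-\varepsilon_B)N'$. Lemma~\ref{konkatenacja} then gives $\mu\approx\mu^*:=\sum_i(|B_i|/N')\nu_i$, whose ergodic decomposition is the \emph{discrete} measure $\xi_{\mu^*}=\sum_i(|B_i|/N')\delta_{\nu_i}$. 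Since $\mathcal{M}_\sigma(X)$ is a Bauer simplex, the ergodic-decomposition map $\nu\mapsto\xi_\nu$ is a homeomorphism, hence uniformly continuous on the compact $\mathcal{M}_\sigma(X)$, so $\xi_{\mu^*}$, $\xi_\mu$ and $\xi_{\hat\mu}=\sum_k(|C_{j_k}|/N')\xi_{C_{j_k}}$ are pairwise weak-$\star$-close. The barycenter inequality applied to each nonergodic $C_{j_k}$ yields $\xi_{C_{j_k}}(\mathsf{Ball}(\mu_0,r_0)^{\mathsf{c}})\ge r_0$ uniformly in $\mu_0\in\mathcal{M}^{\mathsf{erg}}_\sigma(X)$ for some $r_0=r_0(\eta)>0$; averaging gives $\xi_{\hat\mu}(\mathsf{Ball}(\mu_0,r_0)^{\mathsf{c}})\ge(\varepsilon/2)\,r_0$ uniformly in~$\mu_0$.

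The hardest step, which I expect to be the main obstacle, is the final combinatorial contradiction: reconciling the uniform spread of $\xi_{\hat\mu}$ with the discreteness of $\xi_{\mu^*}$, supported on only finitely many atoms of~$\mathcal{M}^{\mathsf{erg}}_\sigma(X)$, through the weak-$\star$ approximation $\xi_{\hat\mu}\approx\xi_{\mu^*}$. I plan to handle it by fixing a finite $(r_0/4)$-cover of the compact ergodic set, pigeonholing the cluster weights $\{|B_i|/N'\}$ on this cover, and locating a centre $\mu_0^*\in\mathcal{M}^{\mathsf{erg}}_\sigma(X)$ at which the uniform spread bound conflicts numerically with $\xi_{\hat\mu}\approx\xi_{\mu^*}$. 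The Bauer property enters twice in an essential way: Lemma~\ref{barycenter_compact} supplies the uniform spread bound, and the homeomorphism $\nu\mapsto\xi_\nu$ lets the $d^*$-approximations between $\mu$, $\hat\mu$ and $\mu^*$ be transferred to weak-$\star$ approximations between their ergodic decompositions.
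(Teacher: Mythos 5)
Your setup (shrinking $\U$ to an $\eta$-neighbourhood of $\Me[X]$, invoking Lemma~\ref{barycenter_compact}, and approximating a long sub-block $B$ by the barycenters $\hat\mu$ and $\mu^*$) is sound, but the argument collapses at exactly the step you flag as the main obstacle, and for a structural rather than technical reason: the quantity you end up comparing --- the ``spread'' of the ergodic decomposition $\xi_{\hat\mu}$ of the \emph{global} empirical measure of $B$ --- does not detect nonergodic blocks. If $x$ consists of long stretches generic for many different ergodic measures $\mu_1,\mu_2,\dots$, then a decomposition in which \emph{every} $C_j$ is $\U$-ergodic still produces a $\xi_{\hat\mu}$ (and a $\xi_{\mu^*}$) whose mass is spread over many well-separated atoms, so that $\xi_{\hat\mu}\bigl(\mathsf{Ball}(\mu_0,r_0)^{\mathsf{c}}\bigr)$ is large for every $\mu_0\in\Me[X]$. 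Conversely, a finitely supported $\xi_{\mu^*}$ can be arbitrarily spread ($N$ atoms of weight $1/N$, pairwise more than $2r_0$ apart, leave mass $1-1/N$ outside every $r_0$-ball). Hence ``uniform spread of $\xi_{\hat\mu}$'' and ``discreteness of $\xi_{\mu^*}$'' are perfectly compatible with $\xi_{\hat\mu}\approx\xi_{\mu^*}$, and no $(r_0/4)$-cover or pigeonholing can manufacture a numerical conflict. The same applies to your intermediate conclusion $d^*(\mu,\Me[X])\ge\gamma/2$: the empirical measure of a long block is generically far from $\Me[X]$ even when all its constituent blocks are ergodic, so that inequality contradicts nothing. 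What is irretrievably lost in passing to a single global average over $B$ is the \emph{local} information of which blocks $C_j$ sit next to which.

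The paper's proof keeps that local information. It applies Theorem~\ref{thm1} to produce intermediate blocks $B_i$ of lengths in $[m_0,n_0]$ with $m_0\gg k$ (the upper bound on the $|C_j|$), each satisfying $d^*(B_i,\Me[X])<\tfrac{\gamma}4$, i.e.\ each close to a \emph{single} ergodic measure $\mu_i$; it groups the $C_j$ contained in a given $B_i$ into a block $\tilde B_i$ with $d^*(B_i,\tilde B_i)<\tfrac{\gamma}4$, and applies Lemma~\ref{barycenter_compact} to the barycenter $\sum_{j}\frac{|C_j|}{|\tilde B_i|}\nu_j$, which is now within $\gamma$ of the \emph{extreme point} $\mu_i$. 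This is where the Bauer hypothesis does its work: it forces all but an $\tfrac{\varepsilon}2$-fraction (by weight) of the $\nu_j$ in that group to lie within $\tfrac{\varepsilon}2$ of $\mu_i$, hence the corresponding $C_j$ to be $\U$-ergodic; summing over $i$ and accounting for the boundary and $A$-blocks gives the density bound. To salvage your contradiction scheme you would have to run it inside each $B_i$ separately --- at which point it becomes the paper's direct argument, and the ergodic-decomposition homeomorphism of the Bauer simplex is not needed at all; only the extreme-point stability supplied by Lemma~\ref{barycenter_compact} is.
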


\begin{remark}It is worth mentioning that theorem~\ref{thm2} applies to any partition of $x$ into blocks of equal (sufficiently large) lengths.
\end{remark}

\begin{proof}[Proof of theorem~\ref{thm2}]
	Choose an $\varepsilon>0$. By compactness of $\Me[X]$, without loss of generality, we can assume that $\U=\mathsf{Ball}(\Me[X],\rho)$ for some $\rho>0$. Then we can also assume that $\varepsilon=\rho$.
	
	 Since $\M_{\sigma}^{\mathsf{erg}}(X)$ is closed, lemma~\ref{barycenter_compact} implies that there exists $0<\gamma<\varepsilon$ such that for each pair of measures $\mu_0\in\M_{\sigma}^{\mathsf{erg}}(X)$ and $\mu=\int_{\mathcal{M}_{\sigma}(X)}\nu\mathrm{d}\xi(\nu)$, the following implication holds
	\begin{equation*}
	d^*(\mu,\mu_0)<\gamma \Rightarrow \xi\bigl(\M_{\sigma}(X)\setminus \mathsf{Ball}(\mu_0,\tfrac{\varepsilon}2)\bigr)<\tfrac{\varepsilon}2.
	\end{equation*}
	
	 By theorem~\ref{blocks_close_to_measures}, there exists $k_0$ such that every block $C\in\mathcal{B}^*(X)$ of length at least $k_0$ satisfies $\dist[C]{\mathcal{M}_{\sigma}(X)}<\tfrac{\gamma}4$. Choose some $k\ge k_0$ and an $x\in X$. Fix some decomposition of $x$ into blocks: $x=\dots C_{-2}C_{-1}C_1C_2\dots$ (resp.\ $x=C_1C_2C_3...$, for a one-sided subshift), of lengths from the interval $[k_0,k]$. Let $m_0=\lceil \tfrac{8k}{\gamma}\rceil$.
	
	 By theorem~\ref{thm1}, there exist $n_0\in\mathbb{N}$ and a representation: $x=\dots B_{-1}A_{-1}A_1B_1\dots$ (resp.\ $x=A_1B_1A_2B_2\dots$, in the case of a one-sided subshift), such that the blocks $B_i$ have lengths from the interval $[m_0,n_0]$ and satisfy $\dist[B_i]{\Me[X]}<\tfrac{\gamma}4$, and  $\overline{d}_{\mathsf{Ban}}\bigl(M^{\mathsf{NE}}(x)\bigr)<\tfrac{\varepsilon}4$. Let us denote by $I_{A_i}$ (resp.\ $I_{B_i}$, $I_{{C_j}}$) the sets of coordinates pertaining to the block $A_i$ (resp.~$B_i$,~$C_j$). We define the sets
	\begin{gather*}
	\mathcal{J}_{B_i}=\{j: I_{C_j}\subset I_{B_i}\}, \hspace{20pt} \mathcal{J}_{\mathsf{B}}=\bigcup_{i}\mathcal{J}_{B_i}\\
	\mathcal{J}_{A_i}=\{j: I_{C_j}\cap I_{A_i}\neq\varnothing \}, \hspace{20pt} \mathcal{J}_{\mathsf{A}}=\bigcup_{i}\mathcal{J}_{A_i}.
	\end{gather*}
	Furthermore, for each $i$ we define $\tilde{B_i}$ as the concatenation of the blocks $C_j$ with $j\in\mathcal{J}_{B_i}$. The construction of the blocks $\tilde{B}_i$ is presented in figure 1.
	\begin{figure}[ht]
		\centering
		\begin{tikzpicture}
		\draw (-7,0.5)--(7.2,0.5);
		\draw (-7,-0.5)--(7.2,-0.5);
		\draw (-7,0.3)--(7.2,0.3);
		\draw (-7,-0.3)--(7.2,-0.3);
		\foreach \x in {-14,...,14}
		\draw (0.5*\x,0.5)--(0.5*\x, 0.3);
		\foreach \x in {-7,-5.75,0.25,1.35,6.1,6.8}
		\draw (\x,-0.5)--(\x,-0.3);
		\foreach \x in {1,...,28}
		\node[anchor= north west] at ({-7.5+0.5*\x}, 0.3) {\tiny$C_{\x}$};
		\node[anchor= north] at (-6.25, -0.5) {$A_1$};
		\node[anchor= north] at (-2.75, -0.5) {$B_1$};
		\node[anchor= north] at (0.9, -0.5) {$A_2$};
		\node[anchor= north] at (3.75, -0.5) {$B_2$};
		\node[anchor= north] at (6.5, -0.5) {$A_3$};
		\draw [
		thick,
		decoration={
			brace
		},
		decorate
		] (-5.5,0.6) -- (0,0.6);
		\draw [
		thick,
		decoration={
			brace
		},
		decorate
		] (1.5,0.6) -- (6,0.6);
		\node[anchor=south] at (-2.75,0.6) {$\tilde{B_1}$};
		\node[anchor=south] at (3.75,0.6) {$\tilde{B_2}$};
		\end{tikzpicture}
		\caption{The construction of the blocks $\tilde{B_i}$}
		\label{blocks_scheme}
	\end{figure}
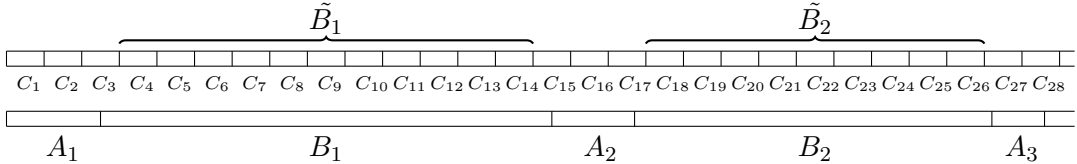

	\noindent Note that, for each $i$, we have $$\frac{|\tilde{B}_i|}{|B_i|}\ge 1-\frac{2k}{m_0}>1-\frac{\gamma}{4},$$
	which implies that the lower Banach density of the coordinates pertaining to the blocks $\tilde{B}_i$ is larger than or equal to the lower Banach density of the coordinates pertaining to the blocks $B_i$ multiplied by $1-\tfrac{\gamma}{4}$. Passing to the complements, we get \begin{equation}\label{dbA}
	\overline{d}_{\mathsf{Ban}}\bigl(\bigcup_{j\in\mathcal{J}_{\mathsf{A}}}I_{C_j}\bigr)\le \overline{d}_{\mathsf{Ban}}\bigl(M^{\mathsf{NE}}(x)\bigl)+\frac{\gamma}{4}<\frac{\varepsilon}2.
	\end{equation}
	For every $i$ we have
	\begin{equation*}
	d^*(B_i,\tilde{B_i})=\sum_{l\in\mathbb{N}}\frac{1}{2^{l}}\sum_{D\in\Lambda^l}|\mathrm{Fr}_{B_i}(D)-\mathrm{Fr}_{\tilde{B_i}}(D)|\le \frac{2k}{m_0}<\frac{\gamma}{4}.
	\end{equation*}
	Additionally, for every $i$, we have $\dist[B_i]{\Me[X]}<\tfrac{\gamma}4$. Hence, there exists a measure $\mu_{i}\in\M_{\sigma}^{\mathsf{erg}}(X)$ satisfying $\dist[\mu_{i}]{B_i}<\tfrac{\gamma}4$. On the other hand, for every $j$, there exists a measure $\nu_j\in\mathcal{M}_{\sigma}(X)$ such that $d^*(C_j,\nu_j)<\tfrac{\gamma}4$.
	By lemma~\ref{konkatenacja}, for every $i$, it is true that
	\begin{equation*}
	\dist[\tilde{B_i}]{\sum_{j\in \mathcal{J}_{B_i}}\frac{|C_j|}{|\tilde{B_i}|}\nu_j}<\frac{\gamma}{2}.
	\end{equation*}
	Thus, using the triangle inequality, we obtain
	\begin{equation*}
	\dist[\mu_{i}]{\sum_{j\in \mathcal{J}_{B_i}}\frac{|C_j|}{|\tilde{B_i}|}\nu_j}\le \dist[\mu_{i}]{B_i}+\dist[B_i]{\tilde{B_i}}+\dist[\tilde{B_i}]{\sum_{j\in \mathcal{J}_{B_i}}\frac{|C_j|}{|\tilde{B_i}|}\nu_j}<\gamma.
	\end{equation*}
Obviously, $\sum_{j\in \mathcal{J}_{B_i}}\frac{|C_j|}{|\tilde{B_i}|}\nu_j=\int_{\mathcal{M}_{\sigma}(X)}\nu\mathrm{d}\xi_i(\nu)$ for the measure $\xi_i=\sum_{j\in \mathcal{J}_{B_i}}\frac{|C_j|}{|\tilde{B_i}|}\delta_{\nu_j}$.
Observe, that the sum of the coefficients $\tfrac{|C_j|}{|\tilde{B_i}|}$ over the indices $j$ from the set
	$$\{j\in \mathcal{J}_{B_i}:\  \nu_j\notin \mathsf{Ball}(\mu_i,\tfrac{\varepsilon}2)\}$$
	is equal to $\xi_i\bigl(\mathcal{M}_{\sigma}(X)\setminus \mathsf{Ball}(\mu_i,\tfrac{\varepsilon}2)\bigr)$. Therefore, by lemma~\ref{barycenter_compact}, it is smaller than $\tfrac{\varepsilon}2$.
	
	 We define the sets $$\mathcal{J}^{\mathsf{NE}}_{B_i}=\{j\in \mathcal{J}_{B_i}:  C_j\notin \U\}, \hspace{20pt} \mathcal{J}^{\mathsf{NE}}_{\mathsf{B}}=\bigcup_{i}\mathcal{J}^{\mathsf{NE}}_{B_i}.$$ In words, $\mathcal{J}_{B_i}^{\mathsf{NE}}$ is the set of indices $j$ such that $C_j$ is a nonergodic block and $I_{C_j}\subset I_{B_i}$. Fix~$j\in\mathcal{J}_{B_i}^{\mathsf{NE}}$. Then $C_j\notin \U$, hence also  $C_j\notin \mathsf{Ball}(\mu_i,\varepsilon)$. Since $d^*(C_j,\nu_j)<\tfrac{\gamma}4<\tfrac{\varepsilon}4$, we~have, $$d^*(\nu_j,\mu_{i})\ge d^*(C_j,\mu_{i})-d^*(C_j,\nu_j)>\varepsilon-\tfrac{\varepsilon}4=\tfrac{3\varepsilon}4>\tfrac{\varepsilon}2.$$ 
	Henceforth, for every $i$, the following inclusion holds
	\begin{equation*}
	\{j\in \mathcal{J}_{B_i}:\  \nu_j\notin \mathsf{Ball}(\mu_i,\tfrac{\varepsilon}2)\} \supset \mathcal{J}^{\mathsf{NE}}_{B_i}.
	\end{equation*} 
	Thus, for every $i$, it is true that
	\begin{equation}\label{oszacowanko}
	\sum_{j\in\mathcal{J}^{\mathsf{NE}}_{B_i}}\frac{|C_j|}{|\tilde{B_i}|}<\frac{\varepsilon}2.
	\end{equation} 
	In words, the fraction of nonergodic blocks $C_j$ in each block $\tilde{B}_i$, is smaller than~$\tfrac{\varepsilon}2$. 
	
	 Let $r$ belong to $\Z$ (resp.\ $\N_0$, for a one-sided subshift) and $N$ belong to $\N$. We denote $\mathcal{I}_r^N=\{i: I_{\tilde{B_i}}\cap [r,r+N)\neq\varnothing\}$. Note that there are at most two blocks $\tilde{B_i}$, for which $I_{\tilde{B_i}}\not\subset [r,r+N)$ and $I_{\tilde{B_i}}\cap[r,r+N)\neq\varnothing$. Thus, $$\sum_{i\in\mathcal{I}_r^N}|\tilde{B_i}|\le N+2n_0.$$ On account of that and by inequality~\eqref{oszacowanko}, we get
	\begin{multline*}
	\frac{1}{N}\Bigl|\bigcup_{j\in \mathcal{J}^{\mathsf{NE}}_{\mathsf{B}}}I_{C_j}\cap [r,r+N)\Bigr|=\frac{1}{N}\sum_{i\in \mathcal{I}_r^N}\sum_{j\in \mathcal{J}^{\mathsf{NE}}_{B_i}}|C_j|= \frac{1}{N}\sum_{i\in \mathcal{I}_r^N}|\tilde{B_i}|\sum_{j\in \mathcal{J}^{\mathsf{NE}}_{B_i}}\frac{|C_j|}{|\tilde{B_i}|}\\< \frac{1}N(N+2n_0)\frac{\varepsilon}{2}=\frac{\varepsilon}{2}+\frac{n_0}{N}\varepsilon.
	\end{multline*}
	Thus, the following inequality holds
	\begin{equation*}
	\sup_{r}\frac{1}{N}\Bigl|\bigcup_{j\in \mathcal{J}^{\mathsf{NE}}_{\mathsf{B}}}I_{C_j}\cap [r,r+N)\Bigr|<\frac{\varepsilon}{2}+\frac{n_0}{N}\varepsilon,
	\end{equation*}
	which implies that \begin{equation}\label{dbC}
	\overline{d}_{\mathsf{Ban}}\Bigl(\bigcup_{j\in \mathcal{J}^{\mathsf{NE}}_{\mathsf{B}}}I_{C_j} \Bigr)\le\frac{\varepsilon}2.
	\end{equation}
	Let $\mathcal{J}^{\mathsf{NE}}=\{j: C_j\notin \U\}$. Our goal is to show that $\overline{d}_{\mathsf{Ban}}\Bigl(\bigcup_{j\in\mathcal{J}^{\mathsf{NE}}}I_{C_j}\Bigr)<\varepsilon$. Obviously, $$\bigcup_{j\in\mathcal{J}^{\mathsf{NE}}}I_{C_j}\subset\bigcup_{j\in\mathcal{J}_{\mathsf{A}}}I_{C_j}\cup \bigcup_{j\in\mathcal{J}^{\mathsf{NE}}_{\mathsf{B}}}I_{C_j}.$$ Therefore, by the inequalities~\eqref{dbA} and~\eqref{dbC}, and subadditivity of upper Banach density, we obtain
	\begin{equation*}
	\overline{d}_{\mathsf{Ban}}\Bigl(\bigcup_{j\in\mathcal{J}^{\mathsf{NE}}}I_{C_j}\Bigr)\le \overline{d}_{\mathsf{Ban}}\Bigl(\bigcup_{j\in\mathcal{J}_{\mathsf{A}}}I_{C_j}\Bigr)+\overline{d}_{\mathsf{Ban}}\Bigl(\bigcup_{j\in\mathcal{J}^{\mathsf{NE}}_{\mathsf{B}}}I_{C_j}\Bigr)<\frac{\varepsilon}{2}+\frac{\varepsilon}{2}=\varepsilon.
	\end{equation*}
\end{proof}
\section{Examples}\label{sekcja3}

 We begin with two examples showing that sometimes the presence of a small fraction of nonergodic blocks in a decomposition as in theorems~\ref{thm1},~\ref{finitistic} and~\ref{unbounded} is inevitable. The first (simple) example shows that for some $x\in\{0,1\}^{\mathbb N_0}$ infinitely many nonergodic blocks must occur. Still, their upper Banach density in this example can be reduced to zero. The example concerns a one-sided symbolic element. A two-sided example can be easily produced by reflection about the coordinate zero.

\begin{example}\label{nontoepliz}
	Define $B_0=01,\ B_1=0011,\ B_2=000111,\dots, B_k=0^{k+1}1^{k+1}$, \dots, and~put
	$$
	x=B_0\ B_1B_0B_1\ B_2B_1B_0B_1B_2\ B_3B_2B_1B_0B_1B_2B_3\ \dots.
	$$
	We let $X$ be the shift-orbit closure of $x$. It is easy to see that $X$ supports only two ergodic measures, $\delta_{\mathbf 0}$ and $\delta_{\mathbf 1}$, the Dirac measures supported at elements $\mathbf 0 = 000\dots$ and $\mathbf 1=111\dots$. Let $\mathcal U$ (restricted to $\mathcal B^*$) be defined by 
	$$
	B\in\mathcal U\ \ \iff \ \ |B|>1\text{ and }\mathrm{Fr}_B(0)\notin[\tfrac15,\tfrac45].
	$$
	It is not hard to see that any block $B$ occurring in $x$ in such a place that it has at least one coordinate common with an ``explicit'' occurrence of $B_0$ (we disregard here the ``implicit'' occurrences of $B_0$ in the centers of the blocks $B_k$, $k>0$), does not belong to $\mathcal U$ (see figure~2). 
	\begin{figure}[ht]
		$$
		\dots|\underset{B_3}{0\,0\,0\,0\,1\,1\,1\,1}|\underset{B_2}{0\,0\,0\,1\,1\,1}|\underset{B_1}{0\,0\,1\,1}|\overset{\mathrm{Fr}(0)=\frac34}{\overbrace{\underset{B_0}{0\,1}|0\,0}}\underset{\!\!\!\!\!\!B_1}{\,1\,1}|\underset{B_2}{0\,0\,0\,1\,1\,1}|\underset{B_3}{0\,0\,0\,0\,1\,1\,1\,1}|\dots
		$$
		\caption{Among all blocks (of lengths larger than 1) having a common coordinate with an explicit $B_0$, the largest frequency of zeros is $\tfrac34$ and is achieved on the block $B_000$. The smallest frequency of zeros is $\tfrac14$ and is achieved on the block $11B_0$.
			Extending these blocks further to the right or left will bring the frequency of zeros closer to $\tfrac12$.}
	\end{figure}
	
	Thus, in any decomposition of $x$ as an infinite concatenation of finite blocks (of lengths bounded or not), there will be infinitely nonergodic components. However, since these components are adjacent to the ``explicit'' occurrences of $B_0$, their upper Banach density can be reduced to zero.
\end{example}

In the next (much more complicated) example, the upper Banach density of nonergodic blocks in theorem~\ref{thm1} cannot be reduced to zero. Since $\mathcal{M}_{\sigma}(X)$ in this example is a Bauer simplex, it also shows that assuming (as in theorem~\ref{thm2}) that $\Me$ is closed does not help reducing to 0 the upper Banach density of nonergodic blocks. The example concerns a two-sided symbolic element. A one-sided example can be easily produced by restriction to the nonnegative coordinates.

\begin{example}\label{toeplitz}
	In this example, $x$ is a binary $\{0,1\}$-valued Toeplitz \sq. The standard construction of such a \sq\ consists in successively filling in periodic patterns of increasing periods until the entire \sq\ is filled. In this particular example, in step~1 we fill periodically two in every six places, as follows:
	$$
	x=\dots\begin{matrix}*&\!\!\!\!*&\!\!\!\!*&\!\!\!\!*&\!\!\!\!0&\!\!\!\!1&\!\!\!\!*&\!\!\!\!*&\!\!\!\!*&\!\!\!\!*&\!\!\!\!0&\!\!\!\!1&\!\!\!\!*&\!\!\!\!*&\!\!\!\!*&\!\!\!\!*&\!\!\!\!\underline0&\!\!\!\!1&\!\!\!\!*&\!\!\!\!*&\!\!\!\!*&\!\!\!\!*&\!\!\!\!0&\!\!\!\!1&\!\!\!\!*&\!\!\!\!*&\!\!\!\!*&\!\!\!\!*&\!\!\!\!0&\!\!\!\!1&\!\!\!\!*&\!\!\!\!*&\!\!\!\!*&\!\!\!\!*&\!\!\!\!\dots\end{matrix}
	$$
	(the stars signify the places left to be filled in the following steps, the zero coordinate is marked by the underlined symbol).
	Abbreviating $0\,1=B_0$ and $*\,*\,*\ *=\bar*$, the above structure of $x$ becomes
	$$
	x=\dots\begin{matrix}\bar*&\!\!\!\!B_0&\!\!\!\!\bar*&\!\!\!\!B_0&\!\!\!\!\bar*&\!\!\!\!B_0&\!\!\!\!\bar*&\!\!\!\!B_0&\!\!\!\!\bar*&\!\!\!\!B_0&\!\!\!\!\bar*&\!\!\!\!B_0&\!\!\!\!\bar*&\!\!\!\!\underline {B_0}&\!\!\!\!\bar*&\!\!\!\!B_0&\!\!\!\!\bar*&\!\!\!\!B_0&\!\!\!\!\bar*&\!\!\!\!B_0&\!\!\!\!\bar*&\!\!\!\!B_0&\!\!\!\!\bar*&\!\!\!\!B_0&\!\!\!\!\bar*&\!\!\!\!B_0&\!\!\!\!\bar*\end{matrix}\dots.
	$$
	In step 2, we fill all four empty spaces in each of the blocks $\bar*$ on either side of every tenth block $B_0$ by placing there the block $B_1=0\,0\,1\,1$, as follows:
	$$
	x=\dots\begin{matrix}\bar*&\!\!\!\!B_0&\!\!\!\!\!\!B_1&\!\!\!\!\!\!\underline {B_0}&\!\!\!\!\!\!B_1&\!\!\!\!\!\!B_0&\!\!\!\!\bar*&\!\!\!\!B_0&\!\!\!\!\bar*&\!\!\!\!B_0&\!\!\!\!\bar*&\!\!\!\!B_0&\!\!\!\!\bar*&\!\!\!\!B_0&\!\!\!\!\bar*&\!\!\!\!B_0&\!\!\!\!\bar*&\!\!\!\!B_0&\!\!\!\!\bar*&\!\!\!\!B_0&\!\!\!\!\bar*&\!\!\!\!B_0&\!\!\!\!\!\!B_1&\!\!\!\!\!\!B_0&\!\!\!\!\!\!B_1&\!\!\!\!\!\!B_0&\!\!\!\!\bar*\end{matrix}\dots.
	$$
	Abbreviating $B_0B_1\!B_0B_1\!B_0=\bar B_1$ and $\bar*\,B_0\,\bar*\,B_0\,\bar*\,B_0\,\bar*\,B_0\,\bar*\,B_0\,\bar*\,B_0\,\bar*\,B_0\,\bar*=\bar{\bar*}$,
	the above structure of $x$ becomes
	$$
	x=\dots\begin{matrix}
	\bar{\bar*}&\!\!\!\!\bar B_1&\!\!\!\!\bar{\bar*}&\!\!\!\!\bar B_1&\!\!\!\!\bar{\bar*}&\!\!\!\!\bar B_1&\!\!\!\!\bar{\bar*}&\!\!\!\!\bar B_1&\!\!\!\!\bar{\bar*}&\!\!\!\!\bar B_1&\!\!\!\!\bar{\bar*}&\!\!\!\!\bar B_1&\!\!\!\!\bar{\bar*}&\!\!\!\!\underline{\bar B_1}&\!\!\!\!\bar{\bar*}&\!\!\!\!\bar  B_1&\!\!\!\!\bar{\bar*}&\!\!\!\!\bar B_1&\!\!\!\!\bar{\bar*}&\!\!\!\!\bar B_1&\!\!\!\!\bar{\bar*}&\!\!\!\!\bar B_1&\!\!\!\!\bar{\bar*}&\!\!\!\!\bar B_1&\!\!\!\!\bar{\bar*}&\!\!\!\!\bar B_1&\!\!\!\!\bar{\bar*}
	\end{matrix}\dots.
	$$
	The block $\bar{\bar*}$ contains $8$ blocks $\bar*$ each having $4$ unfilled places. In step 3, on either side of every $18$th block $\bar B_1$ we fill all the unfilled places in $\bar{\bar*}$ consecutively by the symbols $0\,0\,0\,0\,1\,1\,1\,1$ (repeated 4 times).
In this manner, two out of 18 blocks $\bar{\bar*}$ are replaced by
	$$
	B_2= 0\,0\,0\,0\,B_0\,1\,1\,1\,1\,B_0\,0\,0\,0\,0\,B_0\,1\,1\,1\,1\,B_0\,0\,0\,0\,0\,B_0\,1\,1\,1\,1\,B_0\,0\,0\,0\,0\,B_0\,1\,1\,1\,1.
	$$
	After step 3, $x$ has the following form:
	$$
	x=\dots\begin{matrix}
	\bar{\bar*}&\!\!\!\!\bar B_1&\!\!\!\!\!\!B_2&\!\!\!\!\!\!\underline{\bar B_1}&\!\!\!\!\!\!B_2&\!\!\!\!\!\!\bar B_1&\!\!\!\!\bar{\bar*}&\!\!\!\!\bar B_1&\!\!\!\!\bar{\bar*}&\!\!\!\!\bar B_1&\!\!\!\!\bar{\bar*}&\!\!\!\!\dots&\!\!\!\!\bar{\bar*}&\!\!\!\!\bar B_1&\!\!\!\!\bar{\bar*}&\!\!\!\!\bar B_1&\!\!\!\!\bar{\bar*}&\!\!\!\!\bar B_1&\!\!\!\!\!\!B_2&\!\!\!\!\!\!\bar B_1&\!\!\!\!\!\!B_2&\!\!\!\!\!\!\bar B_1&\!\!\!\!\bar{\bar*}
	\end{matrix}\dots,
	$$
	where, in the central section, $\bar{\bar*}$ occurs 16 times and $\bar B_1$ occurs 15 times.
	Abbreviating $\bar B_1B_2\bar B_1B_2\bar B_1=\bar B_2$ and 
	$$
	\begin{matrix}\begin{matrix}
	\bar{\bar*}&\!\!\!\!\!\bar B_1&\!\!\!\!\!\bar{\bar*}&\!\!\!\!\!
	\bar B_1&\!\!\!\!\!\bar{\bar*}&\!\!\!\!\!\bar B_1&\!\!\!\!\!\bar{\bar*}&\!\!\!\!\!
	\bar B_1&\!\!\!\!\!\bar{\bar*}&\!\!\!\!\!\bar B_1&\!\!\!\!\!\bar{\bar*}&\!\!\!\!\!
	\bar B_1&\!\!\!\!\!\bar{\bar*}&\!\!\!\!\!\bar B_1&\!\!\!\!\!\bar{\bar*}&\!\!\!\!\!
	\bar B_1&\!\!\!\!\!\bar{\bar*}&\!\!\!\!\!\bar B_1&\!\!\!\!\!\bar{\bar*}&\!\!\!\!\!
	\bar B_1&\!\!\!\!\!\bar{\bar*}&\!\!\!\!\!\bar B_1&\!\!\!\!\!\bar{\bar*}&\!\!\!\!\!
	\bar B_1&\!\!\!\!\!\bar{\bar*}&\!\!\!\!\!\bar B_1&\!\!\!\!\!\bar{\bar*}&\!\!\!\!\!
	\bar B_1&\!\!\!\!\!\bar{\bar*}&\!\!\!\!\!\bar B_1&\!\!\!\!\!\bar{\bar*}
	\end{matrix}\end{matrix}=\bar{\bar{\bar*}},
	$$
	the above structure of $x$ becomes
	$$
	x=\dots\begin{matrix}
	\bar{\bar{\bar*}}&\!\!\!\!\bar B_2&\!\!\!\!\bar{\bar{\bar*}}&\!\!\!\!\bar B_2&\!\!\!\!\bar{\bar{\bar*}}&\!\!\!\!\bar B_2&\!\!\!\!\bar{\bar{\bar*}}&\!\!\!\!\bar B_2&\!\!\!\!\bar{\bar{\bar*}}&\!\!\!\!\bar B_2&\!\!\!\!\bar{\bar{\bar*}}&\!\!\!\!\bar B_2&\!\!\!\!\bar{\bar{\bar*}}&\!\!\!\!\underline{\bar B_2}&\!\!\!\!\bar{\bar{\bar*}}&\!\!\!\!\bar B_2&\!\!\!\!\bar{\bar{\bar*}}&\!\!\!\!\bar B_2&\!\!\!\!\bar{\bar{\bar*}}&\!\!\!\!\bar B_2&\!\!\!\!\bar{\bar{\bar*}}&\!\!\!\!\bar B_2&\!\!\!\!\bar{\bar{\bar*}}&\!\!\!\!\bar B_2&\!\!\!\!\bar{\bar{\bar*}}&\!\!\!\!\bar B_2&\!\!\!\!\bar{\bar{\bar*}}
	\end{matrix}\dots.
	$$
	In step number $k$ we will fill two out of $2+2^{k+1}$ blocks $\hat*$ (where \,$\hat{}$\,\  stands for the stack of $k-1$ bars) putting alternatively $2^{k-1}$ zeros and $2^{k-1}$ ones 
	in the consecutive free slots (with this pattern repeated $2^{k-1}2^{k-2}\cdots2^2$ times). We let $\bar B_k$ be the maximal entirely filled continuous block and we let $\bar{\hat*}$ be the (partly unfilled) block between the occurrences of $\bar B_k$. The density of unfilled positions after step $k$ equals $\prod_{i=2}^k\frac{2^{i+1}}{2+2^{i+1}}$ which tends decreasingly to a positive number $d\approx0.63$.
	
	In each step $x$ is positioned so that the zero coordinate falls near the center of an occurrence of $\bar B_k$. Eventually the entire \sq\ $x$ is filled out. Then $x$ is a  bi-infinite Toeplitz \sq\ whose orbit-closure $X$ has the following properties (we skip the standard proofs, see \cite{survey} for an exposition on Toeplitz subshifts):
	\begin{enumerate} 
		\item In every $y\in X$ one can distinguish a periodic part $\mathsf{Per}(y)$ (the 
		positions filled in the construction steps) and the complementary aperiodic part 
		$\mathsf{Aper}(y)$ (the positions filled as a result of closing the orbit of $x$;
		the aperiodic part may be empty). 
		\item For almost every (with respect to any invariant measure on $X$) element $y\in X$, we 
		have $\mathsf{dens}(\mathsf{Aper}(y))=d$ (here $\mathsf{dens}$ denotes the two-sided 
		density of a subset of $\mathbb Z$).
		\item For almost every $y\in X$, $\mathsf{Aper}(y)$ is either entirely filled with zeros 
		or entirely filled with ones.
		\item $X$ carries exactly two ergodic measures: $\mu_0$ and $\mu_1$; $\mu_0$ is 
		supported by such $y\in X$ that $\mathsf{Aper}(y)$ is entirely filled with zeros, $\mu_1$ 
		is supported by such $y\in X$ that $\mathsf{Aper}(y)$ is entirely filled with ones. 
		\item $\mu_0([0])=\frac{1+d}2,\ \mu_0([1])=\frac{1-d}2$ and $\mu_1([0])=\frac{1-d}2,\ 
		\mu_1([1])=\frac{1+d}2$. 
	\end{enumerate}
	
	The last technical thing to observe is that for any $k\ge 1$, any subblock of $\bar B_k$, of length larger than 1, which covers at least one of the two central positions $0\,1$ in $\bar B_k$ has the frequency of zeros ranging between $\tfrac14$ and $\tfrac34$ (see the figure below). 
	\begin{figure}[H]
		$$
		\dots|0\,0\,0\,0|0\,1|1\,1\,1\,1|\underset{B_0}{0\,1}|\underset{B_1}{0\,0\,1\,1}|\overset{\mathrm{Fr}(0)=\tfrac34}{\overbrace{\underset{B_0}{0\,1}|0\,0}}\underset{\!\!\!\!\!\!\!\!B_1}{1\,1}|\underset{B_0}{0\,1}|0\,0\,0\,0|0\,1|1\,1\,1\,1|\dots
		$$
		\caption{The figure shows the central part of $\bar B_k,\ k\ge 2$. Among all subblocks (of lengths larger than 1) of $\bar B_k$, having a common coordinate with the central $B_0$, the largest frequency of zeros is $\tfrac34$ and is achieved on the block $B_000$. The smallest frequency of zeros is $\tfrac14$ and is achieved on the block $11B_0$. Extending these blocks further to the right or left will only bring the frequency of zeros closer to $\frac12$.}
	\end{figure}
	
	Let us define $\mathcal U$ (restricted to $\mathcal B^*$) by the properties $|B|>1$ and $\mathrm{Fr}_B(0)\notin[\tfrac15,\tfrac45]$.
	Because $\frac{1-d}2<\tfrac15$ and $\frac{1+d}2>\tfrac45$, $\mathcal U$ is an open neighborhood of $\M^{\mathsf{erg}}_\sigma(X)$. Suppose $x$ is represented as an infinite concatenation of some blocks $C_j$ ($j\in\mathbb Z$) of lengths bounded by some $n$. Let $k$ be such that $n<\frac12|\bar B_k|$. Then, in every occurrence of $\bar B_k$ there is a block $C_j$ not disjoint with the central $B_0$, and this block is entirely covered by the $\bar B_k$. As we have noted above, either $|C_j|=1$ or $\mathrm{Fr}_{C_j}(0)\in[\frac14,\frac34]$. In either case $C_j\notin\mathcal U$, i.e.\ $C_j$ is a nonergodic block. We have shown that each occurrence of $\bar B_k$ in $x$ contains a nonergodic block $C_j$. Since the explicit occurrences of $\bar B_k$ are periodic, all occurrences of $\bar B_k$ have positive lower Banach density. This implies that nonergodic blocks $C_j$ have positive lower Banach density as well.
\end{example}
\begin{remark}
The block $B_k$ in the above example shows also that in theorem~\ref{finitistic} the presence of nonergodic subblocks is inevitable in any decomposition of a long block.
\end{remark}

 We end this section with an example showing that the assumption of compactness of the set $\Me[X]$ in theorem~\ref{thm2} is essential: there exist a subshift $X$ with $\mathcal{M}_{\sigma}(X)$ not being a Bauer simplex and an element $x\in X$ such that, for any $n\ge 1$, there exists a representation of $x$ as a concatenation of blocks with lengths bounded by $n$, such that the upper Banach density of nonergodic blocks equals 1. The example concerns a one-sided subshift. A two-sided example can be obtained by reflection about the coordinate zero.
\begin{example}
		Let $(B_k)_{k\in\N}$ be the sequence of blocks defined as follows: $B_1=111000$, $B_2=111111000000,\dots$, $B_k=1^{3k}0^{3k},\ldots$. Let $x\in\{0,1\}^{\N_0}$ be the following concatenation:
	\begin{equation*}
	x=B_1\ B_1B_1B_2B_2\ B_1B_1B_1B_2B_2B_2B_3B_3B_3\cdots\underbrace{B_{1}...B_{1}}_{k\text{ times}}\underbrace{B_{2}...B_{2}}_{k\text{ times}}\dots\underbrace{B_{k}...B_{k}}_{k\text{ times}}\cdots
	\end{equation*}
	
	 Let $X$ be the orbit closure of $x$. It is easy to check that the only ergodic measures on $X$ are $\delta_{\bm{0}}$ and $\delta_{\bm{1}}$ and the measures $\mu_k$, $k\in\N$ supported on the periodic orbits of the points $x_k=B_kB_k\dots$. Observe that the sequence $(\mu_k)_{k\in\N}$ converges in the weak-$\star$ topology to the measure $\tfrac{1}{2}(\delta_{\bm{0}}+\delta_{\bm{1}})\notin \Me[X]$. Consequently, the set $\Me[X]$ is not closed in $\mathcal{M}(X)$. Moreover, every measure $\mu\in\Me[X]$ satisfies $\mu([0])\in\{0,\tfrac{1}2,1\}$. Hence, the condition $\mathrm{Fr}_{B}(0)\in[0,\tfrac{1}5)\cup (\tfrac{2}5, \tfrac{3}5)\cup (\tfrac{4}5,1]$ defines an open neighbourhood $\mathcal{U}$ of $\Me[X]$ (restricted to the set $\mathcal{B}^*(X)$).
	
	 For every $m\in\N$, each number $n\ge (m-1)m$ can be written as a combination $n=am+b(m+1)$, where $a,b\in\N$. For a fixed $k\in\N$, let $n_1\ge(3k-1)3k$ be an initial coordinate of a series of repetitions of the blocks $B_k$ in $x$, and denote by $m_1$ the terminal coordinate of that series. Because $n_1+k-1\ge(3k-1)3k$, we can decompose $x|_{[0,n_1+k)}$ into blocks $C_j$ of lengths $3k$ or $3k+1$. Then, we divide $x|_{[n_1+k,m_1-2k]}$ into blocks~$C_j$ of lengths equal to $3k$. Observe that these blocks have the form \begin{equation}\label{3}
	C_j=\underbrace{11...1}_{k}\underbrace{000...000}_{2k}\ \  \text{or} \ \ C_j=\underbrace{00...0}_{k}\underbrace{111...111}_{2k},
	\end{equation}
	hence $\mathrm{Fr}_{C_j}(0)\in\{\tfrac13,\tfrac23\}$. Therefore, these blocks $C_j$ are nonergodic.
	Let $n_2\ge m_1+(3k-1)3k$ and $m_2$ be the initial and terminal coordinates of another series of repetitions of the block $B_k$ (note that this series is longer than the preceding one). It is possible to divide $x|_{[m_1-2k+1,n_2+k)}$ into blocks $C_j$ of lengths $3k$ or $3k+1$. Then we decompose $x|_{[n_2+k,m_2-2k]}$ into blocks $C_j$ of lengths $3k$. These blocks also have the form~\eqref{3}, hence are nonergodic. We continue the construction similarly for $s\ge 3$. Since ${(m_l-2k)-(n_s+k)\to +\infty}$ as $s\to+\infty$, the upper Banach density of the nonergodic blocks $C_j$ is equal to 1.
\end{example}
\section{Symbolic systems with an action of a countable amenable group}\label{sekcja4}
\subsection{Amenable groups.}\label{podsekcja41}

 In what follows, $G$ denotes a countable (infinite), discrete group. All theorems provided in this subsection are standard and their proofs will be omitted.
\begin{definition}
	Fix an $\varepsilon>0$ and a finite subset $K\subset G$. A finite subset $F\subset G$ is called $(K,\varepsilon)$-\emph{invariant} if it satisfies
	\begin{equation}
	\frac{|F\triangle KF|}{|F|}<\varepsilon.
	\end{equation}
If $K=\{g\}$ for some $g\in G$ then we say that $F$ is $(g,\varepsilon)$-invariant.
\end{definition}
\begin{fact}\label{gandK} Let $\varepsilon$ be a positive number and let $K,F$ be finite subsets of $G$.
	\begin{enumerate}
		\item [a)] If, for every $g\in K$, the set $F$ is $(g,\tfrac{\varepsilon}{|K|})$-invariant then $F$ is $(K,\varepsilon)$-invariant.
		\item [b)] If $F$ is $(K,\varepsilon)$-invariant then, for every $g\in K$, $F$ is $(g,2\varepsilon)$-invariant. 
	\end{enumerate}
\end{fact}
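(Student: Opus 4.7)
The plan is to prove both parts by elementary manipulations of the symmetric difference, leveraging the basic fact that left translation by any $g \in G$ is a bijection, so $|gF| = |F|$.

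For part (a), I would first observe that $KF = \bigcup_{g \in K} gF$ and verify the inclusion
\[
F \triangle KF \subset \bigcup_{g \in K} (F \triangle gF).
\]
Indeed, any element of $F \setminus KF$ belongs to $F \setminus gF$ for every $g \in K$, while any element of $KF \setminus F$ belongs to $gF \setminus F$ for at least one $g \in K$. Combining this with subadditivity of cardinality and the assumed $(g,\varepsilon/|K|)$-invariance yields
\[
|F \triangle KF| \le \sum_{g \in K} |F \triangle gF| < |K| \cdot \frac{\varepsilon}{|K|} \cdot |F| = \varepsilon |F|,
\]
which is precisely $(K,\varepsilon)$-invariance.

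For part (b), I would fix $g \in K$ and split $F \triangle gF = (F \setminus gF) \cup (gF \setminus F)$. The key observation is that $|gF| = |F|$, hence
\[
|F \setminus gF| = |F| - |F \cap gF| = |gF| - |F \cap gF| = |gF \setminus F|,
\]
so that $|F \triangle gF| = 2|gF \setminus F|$. Since $gF \setminus F \subset KF \setminus F \subset F \triangle KF$, the $(K,\varepsilon)$-invariance hypothesis gives $|gF \setminus F| < \varepsilon|F|$, and doubling yields $|F \triangle gF| < 2\varepsilon|F|$.

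Neither direction poses a real obstacle; the whole argument is elementary. The only conceptual point worth emphasizing is that in part (b) one must use the bijectivity of left translation to equate the two ``halves'' of the symmetric difference, which is where the factor of $2$ originates, and consequently a direct converse to (a) with matching constants is not available.
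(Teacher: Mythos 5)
Your proof is correct, and since the paper explicitly omits the proof of this fact as standard, yours is exactly the expected argument: part (a) via the inclusion $F\triangle KF\subset\bigcup_{g\in K}(F\triangle gF)$ and subadditivity, and part (b) via the identity $|F\setminus gF|=|gF\setminus F|$ (from $|gF|=|F|$) combined with $gF\setminus F\subset KF\setminus F\subset F\triangle KF$, which is precisely where the factor $2$ must come from. No gaps.
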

\begin{definition}
	A set $A\subset G$ is called an $\varepsilon$-modification of a finite set $B\subset G$ if
	\begin{equation}
	\frac{|A\triangle B|}{|B|}<\varepsilon.
	\end{equation}
	If $A$ is an $\varepsilon$-modification of $B$ then $B$ is an $(\tfrac{\varepsilon}{1-\varepsilon})$-modification of $A$.
\end{definition}
\begin{definition}\label{folner}
	By a \emph{F{\o}lner sequence} we mean a sequence $(F_n)_{n\in\N}$ of finite subsets of $G$, such that, for every $\varepsilon>0$ and every finite set $K\subset G$, the sets $F_n$ are eventually (i.e.\ for $n$ large enough) $(K,\varepsilon)$-invariant.
\end{definition}
\begin{remark}
	Since $G$ is infinite, any F{\o}lner sequence satisfies $\lim\limits_{n\to+\infty}|F_n|=+\infty$.
\end{remark}
\begin{definition}
	A countable discrete group $G$ is called \emph{amenable} if it~has~a~F{\o}lner~sequence.
\end{definition}
\noindent For other definitions of amenability and the proofs of their equivalence, see e.g.~\cite{Namioka}.
\begin{definition}\label{densities_G}
	Let $\mathcal{F}(G)$ denote the collection of all finite subsets of a countable group~$G$ and let $A$ be any subset of $G$. \emph{Upper Banach density} of $A$ is defined as
	\begin{equation}\label{upper_banach_densityG}
	\overline{d}_{\mathsf{Ban}}(A)=\inf_{F\in\mathcal{F}(G)}\sup_{g\in G}\frac{|A\cap Fg|}{|F|}.
	\end{equation}
	Similarly, we define \emph{lower Banach density} as
	\begin{equation}\label{lower_banach_densityG}
	\underline{d}_{\mathsf{Ban}}(A)=\sup_{F\in\mathcal{F}(G)}\inf_{g\in G}\frac{|A\cap Fg|}{|F|}.
	\end{equation}
	If $\overline{d}_{\mathsf{Ban}}(A)=\underline{d}_{\mathsf{Ban}}(A)$ then we denote the common value by $d_{\mathsf{Ban}}(A)$ and call it \emph{Banach density} of $A$.
\end{definition}
\begin{remark}\label{densities_comp}
	Similarly as for subsets of $\Z$, we have $\overline{d}_{\mathsf{Ban}}(A)=1-\underline{d}_{\mathsf{Ban}}(A^{\mathsf{c}})$ for any $A\subset G$. Upper Banach density is subadditive, i.e.\ for every $A,B\subset G$ we have
	\begin{equation*}
	\overline{d}_{\mathsf{Ban}}(A\cup B)\le \overline{d}_{\mathsf{Ban}}(A)+\overline{d}_{\mathsf{Ban}}(B).
	\end{equation*}
\end{remark}

 Using the notion of a F{\o}lner sequence we can provide equivalent formulas for upper and lower Banach densities in countable amenable groups. In the case of $G=\Z$ and $F_n=[0,n)$, $n\in\N$, the formulas below coincide with those in definition~\ref{densities}. The proof of the following theorem is provided e.g.\ in \cite[lemma 2.9]{DHZ}.
\begin{theorem}\label{density_foelner}
	Let $G$ be a countable amenable group and let $(F_n)_{n\in\N}$ be a F{\o}lner sequence in $G$. Then
	\begin{eqnarray}
	\overline{d}_{\mathsf{Ban}}(A)=\lim\limits_{n\to+\infty}\sup_{g\in G}\frac{|A\cap F_ng|}{|F_n|},\\
	\underline{d}_{\mathsf{Ban}}(A)=\lim\limits_{n\to+\infty}\inf_{g\in G}\frac{|A\cap F_ng|}{|F_n|}.	
	\end{eqnarray}
\end{theorem}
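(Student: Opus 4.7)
The plan is to prove the upper-density formula first and deduce the lower-density one by complementation. One inclusion is immediate: since each $F_n$ is an admissible finite set in the infimum defining $\overline{d}_{\mathsf{Ban}}(A)$, we have $\overline{d}_{\mathsf{Ban}}(A)\le \sup_{g\in G}|A\cap F_n g|/|F_n|$ for every $n$, hence
$$\overline{d}_{\mathsf{Ban}}(A)\le\liminf_{n\to+\infty}\sup_{g\in G}\frac{|A\cap F_n g|}{|F_n|}.$$
The substantive direction is the reverse bound on the $\limsup$, and this is what occupies the rest of the argument.

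For this, I would fix $\varepsilon>0$ and, by definition of the infimum, choose a finite set $F\subset G$ with $\sup_{g\in G}|A\cap Fg|/|F|<\overline{d}_{\mathsf{Ban}}(A)+\varepsilon$. The key tool is a simple counting identity: for any finite $E\subset G$,
$$\sum_{h\in G}|A\cap Fh\cap E|=|F|\cdot|A\cap E|,$$
because every $a\in A\cap E$ belongs to exactly $|F|$ right translates $Fh$ (namely those with $h\in F^{-1}a$). Applied with $E=F_n g$, only $h\in F^{-1}F_n g$ contribute, so
$$|F|\cdot|A\cap F_n g|\le |F^{-1}F_n g|\cdot(\overline{d}_{\mathsf{Ban}}(A)+\varepsilon)|F|,$$
and dividing by $|F|\cdot|F_n|$ gives $|A\cap F_n g|/|F_n|\le (|F^{-1}F_n|/|F_n|)\cdot(\overline{d}_{\mathsf{Ban}}(A)+\varepsilon)$ uniformly in $g$.

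The F\o lner property closes the argument: since $F^{-1}$ is a fixed finite subset of $G$, by definition~\ref{folner} the sets $F_n$ are eventually $(F^{-1},\varepsilon)$-invariant, giving $|F^{-1}F_n|/|F_n|<1+\varepsilon$ for large $n$. Taking $\sup$ over $g$ and then $\limsup$ in $n$, we obtain
$$\limsup_{n\to+\infty}\sup_{g\in G}\frac{|A\cap F_n g|}{|F_n|}\le(1+\varepsilon)(\overline{d}_{\mathsf{Ban}}(A)+\varepsilon),$$
and letting $\varepsilon\to 0$ yields the first formula. For the lower-density formula, I would use the identity $\inf_g|A\cap F_n g|/|F_n|=1-\sup_g|A^{\mathsf{c}}\cap F_n g|/|F_n|$ together with $\underline{d}_{\mathsf{Ban}}(A)=1-\overline{d}_{\mathsf{Ban}}(A^{\mathsf{c}})$ from remark~\ref{densities_comp}, and apply the already-proved first part to $A^{\mathsf{c}}$. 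The only point requiring care is that the infimum in the definition of $\overline{d}_{\mathsf{Ban}}$ uses right translates $Fg$, which forces us to impose F\o lner invariance with respect to $F^{-1}$ rather than $F$; this is harmless since $F^{-1}$ is finite and the F\o lner condition quantifies over all finite subsets.
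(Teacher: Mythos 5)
Your proof is correct: the easy inequality from the infimum, the double-counting identity $\sum_{h}|A\cap Fh\cap E|=|F|\,|A\cap E|$ restricted to $h\in F^{-1}F_ng$, and the passage $|F^{-1}F_n|/|F_n|<1+\varepsilon$ via $(F^{-1},\varepsilon)$-invariance all check out, as does the complementation step for the lower density. The paper itself omits the proof and defers to \cite[Lemma 2.9]{DHZ}, and your argument is essentially that standard covering/counting proof, so there is nothing to flag.
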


  By a topological dynamical system with an action of a group $G$ we mean a pair $(X,\tau)$, where $X$ is a compact metric space and $\tau$ is a homomorphism from $G$ to the group of all self-homeomorphisms of $X$ with the operation of composition. For brevity, we will write $g(x)$ instead of $(\tau(g))(x)$. As before, we denote by $\M(X)$ the set of all Borel probability measures on $X$, by $\mathcal{M}_{\tau}(X)\subset \M(X)$ we denote the set of all $\tau$-invariant measures on $X$ (i.e. measures which are $g$-invariant for every $g\in G$) and by $\M_{\tau}^{\mathsf{erg}}(X)\subset \M_{\tau}(X)$ we denote the set of all ergodic measures (i.e. measures such that $\mu(A)\in\{0,1\}$ for all $\tau$-invariant subsets $A\subset X$).
\begin{theorem}\label{inv_measureG}
	Let $G$ be a countable amenable group and let $(F_n)_{n\in\N}$ be a F{\o}lner sequence in $G$. Fix a topological dynamical system $(X,\tau)$ with an action $\tau$ of $G$. Let $(\nu_n)_{n\in\N}$ be a sequence of Borel probability measures on $X$. We define the sequence of measures $\mu_n$, by 
	\begin{equation*}
	\mu_n=\frac{1}{|F_n|}\sum_{g\in F_n}g(\nu_n),
	\end{equation*}
	where $\bigl(g(\nu_n)\bigr)(A)=\nu_n(g^{-1}(A))$ for every Borel set $A\subset X$. Then $(\mu_n)_{n\in\N}$ has a subsequence converging, in the weak-$\star$ topology, to a $\tau$-invariant measure $\mu$.
\end{theorem}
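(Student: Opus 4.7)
The plan is to first extract a weak-$\star$ convergent subsequence by a standard compactness argument, and then to verify invariance of the limit by using the F\o lner property to estimate the ``defect of invariance'' $\|h(\mu_n)-\mu_n\|$ (total variation norm) and show that it tends to zero.

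First, since $X$ is a compact metric space, the set $\M(X)$ is compact and metrizable in the weak-$\star$ topology. Each $\mu_n$ is a convex combination of probability measures and hence itself belongs to $\M(X)$. Therefore the sequence $(\mu_n)_{n\in\N}$ admits a subsequence $(\mu_{n_k})_{k\in\N}$ converging in weak-$\star$ to some $\mu\in\M(X)$. It remains to prove that $\mu$ is $\tau$-invariant, i.e.\ that $h(\mu)=\mu$ for every $h\in G$.

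Fix $h\in G$. The key computation is that, since $g\mapsto g(\nu_n)$ is an action on measures, a change-of-variable reindexing gives
\begin{equation*}
h(\mu_n)=\frac{1}{|F_n|}\sum_{g\in F_n}(hg)(\nu_n)=\frac{1}{|F_n|}\sum_{g'\in hF_n}g'(\nu_n).
\end{equation*}
Consequently $h(\mu_n)-\mu_n$ is a signed measure whose positive and negative parts are each bounded by $\frac{1}{|F_n|}$ times a sum of at most $|hF_n\triangle F_n|$ probability measures. Therefore the total variation norm satisfies
\begin{equation*}
\|h(\mu_n)-\mu_n\|_{\mathsf{TV}}\le \frac{|hF_n\triangle F_n|}{|F_n|}.
\end{equation*}
Applying definition~\ref{folner} with $K=\{h\}$ (and fact~\ref{gandK}\,b) to pass from $(\{e,h\},\varepsilon)$-invariance to $(h,\varepsilon)$-type bounds if necessary), the right-hand side tends to $0$ as $n\to+\infty$.

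In particular, along the chosen subsequence, for every $f\in C(X)$ we have
\begin{equation*}
\left|\int f\,d(h(\mu_{n_k}))-\int f\,d\mu_{n_k}\right|\le \|f\|_\infty\cdot\frac{|hF_{n_k}\triangle F_{n_k}|}{|F_{n_k}|}\xrightarrow[k\to+\infty]{} 0.
\end{equation*}
Passing to the limit and using weak-$\star$ convergence of $\mu_{n_k}\to\mu$ (which also implies $h(\mu_{n_k})\to h(\mu)$, since $f\mapsto f\circ h$ preserves $C(X)$), we obtain $\int f\,d(h(\mu))=\int f\,d\mu$ for all $f\in C(X)$, whence $h(\mu)=\mu$. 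Since $h\in G$ was arbitrary, $\mu\in\M_\tau(X)$, as required. I do not foresee any real obstacle here: the only point needing care is the bookkeeping between the group action on measures and the F\o lner condition $|F_n\triangle hF_n|/|F_n|\to 0$, which is exactly what makes the averaging over $F_n$ asymptotically invariant regardless of the choice of the measures $\nu_n$.
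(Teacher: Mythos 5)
Your proof is correct, and it is exactly the standard Krylov--Bogolyubov argument for amenable group actions that the paper has in mind: the paper omits the proof of this theorem entirely, declaring all results of that subsection standard. Your bookkeeping checks out --- $h(g(\nu))=(hg)(\nu)$ because $\tau$ is a homomorphism, the total-variation bound $\|h(\mu_n)-\mu_n\|\le |hF_n\triangle F_n|/|F_n|$ follows from cancelling the terms indexed by $F_n\cap hF_n$, and the paper's F\o lner condition with $K=\{h\}$ gives precisely the decay of that quantity --- so there is nothing to add.
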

\begin{corollary}
	If $(X,\tau)$ is a topological dynamical system with an action of a countable amenable group $G$, then the set $\M_{\tau}(X)$ is nonempty.
\end{corollary}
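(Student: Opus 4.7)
The plan is to derive this as an essentially immediate consequence of theorem~\ref{inv_measureG}. Since $(X,\tau)$ is a topological dynamical system, $X$ is (by the implicit convention) a nonempty compact metric space, so I may pick an arbitrary point $x_0 \in X$ and set $\nu_n = \delta_{x_0}$ for every $n \in \N$. Let $(F_n)_{n\in\N}$ be any F{\o}lner sequence in $G$, which exists because $G$ is amenable.

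Next, I would form the averages
\begin{equation*}
\mu_n = \frac{1}{|F_n|} \sum_{g \in F_n} g(\delta_{x_0}) = \frac{1}{|F_n|} \sum_{g \in F_n} \delta_{g(x_0)},
\end{equation*}
which are Borel probability measures on $X$. By theorem~\ref{inv_measureG} applied to the sequence $(\nu_n)_{n\in\N} = (\delta_{x_0})_{n \in \N}$, there is a subsequence $(\mu_{n_k})_{k \in \N}$ converging in the weak-$\star$ topology to a $\tau$-invariant Borel probability measure $\mu$ on $X$. In particular $\mu \in \M_{\tau}(X)$, so the latter set is nonempty.

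I do not expect any genuine obstacle: all the analytic work, namely constructing a weak-$\star$ accumulation point of the F{\o}lner averages and verifying that it is invariant under every $g \in G$ (using that $(F_n)$ is F{\o}lner, so $|F_n \triangle g F_n|/|F_n| \to 0$), is already packaged inside theorem~\ref{inv_measureG}. The only point worth noting is that the weak-$\star$ limit of a sequence of probability measures on a compact metric space is again a probability measure (no mass is lost), so the limit does belong to $\M_{\tau}(X)$ and not merely to the cone of nonnegative invariant measures.
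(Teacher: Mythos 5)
Your proposal is correct and follows exactly the route the paper intends: the statement is presented as an immediate corollary of theorem~\ref{inv_measureG}, obtained by feeding that theorem a constant sequence of point masses $\nu_n=\delta_{x_0}$ along any F\o lner sequence and extracting a weak-$\star$ convergent subsequence of the averages. Your closing remark that no mass is lost in the weak-$\star$ limit (because $X$ is compact) is the right point to flag, and nothing further is needed.
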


 Now we formulate a generalization of lemma~\ref{upper_banach_density_invariant}.
\begin{lemma}\label{upper_banach_density_invariantG}
	Let $(X,\tau)$ be a topological dynamical system with an action of a countable amenable group $G$ and let $D\subset X$ be a closed set. The following inequality holds
	\begin{equation*}
	\sup_{\mu\in\mathcal{M}_{\tau}(X)}\mu(D)\ge \sup_{x\in X}\overline{d}_{\mathsf{Ban}}(\{g\in G: g(x)\in D\}).
	\end{equation*}
\end{lemma}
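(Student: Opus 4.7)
The plan is to mimic the classical argument by producing, for each $x \in X$, an invariant measure whose value on $D$ is at least $\overline{d}_{\mathsf{Ban}}(A_x)$, where $A_x = \{g \in G : g(x) \in D\}$. Fix $x \in X$ and a F{\o}lner sequence $(F_n)_{n\in\N}$ in $G$. By theorem~\ref{density_foelner},
\[
\overline{d}_{\mathsf{Ban}}(A_x) = \lim_{n\to+\infty}\sup_{g\in G}\frac{|A_x\cap F_n g|}{|F_n|},
\]
so for each $n$ I pick $g_n \in G$ with $\frac{|A_x \cap F_n g_n|}{|F_n|} \ge \sup_{g\in G}\frac{|A_x\cap F_n g|}{|F_n|} - \frac{1}{n}$. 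In particular, this quantity tends to $\overline{d}_{\mathsf{Ban}}(A_x)$ as $n\to+\infty$.

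Next, I set $\nu_n = \delta_{g_n(x)}$ and form the averaged measures
\[
\mu_n = \frac{1}{|F_n|}\sum_{f\in F_n} f(\nu_n) = \frac{1}{|F_n|}\sum_{f\in F_n}\delta_{fg_n(x)}.
\]
The key (bookkeeping) computation is that
\[
\mu_n(D) = \frac{|\{f\in F_n : fg_n(x)\in D\}|}{|F_n|} = \frac{|A_x \cap F_n g_n|}{|F_n|},
\]
since $fg_n \in A_x$ iff $fg_n(x) \in D$, and as $f$ ranges over $F_n$ the element $fg_n$ ranges over $F_n g_n$. Therefore $\mu_n(D) \to \overline{d}_{\mathsf{Ban}}(A_x)$.

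By theorem~\ref{inv_measureG}, some subsequence $(\mu_{n_k})$ converges in the weak-$\star$ topology to a $\tau$-invariant measure $\mu \in \M_\tau(X)$. Since $D$ is closed, upper semicontinuity of $\mu \mapsto \mu(D)$ on indicator functions of closed sets gives
\[
\mu(D) \ge \limsup_{k\to+\infty}\mu_{n_k}(D) = \overline{d}_{\mathsf{Ban}}(A_x).
\]
Hence $\sup_{\mu\in\M_\tau(X)}\mu(D) \ge \overline{d}_{\mathsf{Ban}}(A_x)$, and taking the supremum over $x\in X$ yields the claim. The only delicate point is the right--left convention: one must verify that the averaging $\frac{1}{|F_n|}\sum_{f\in F_n}f(\nu_n)$ is compatible with the right-translate convention $F_n g$ in definition~\ref{densities_G} of the upper Banach density, which is exactly what the identity $\mu_n(D) = \frac{|A_x \cap F_n g_n|}{|F_n|}$ confirms.
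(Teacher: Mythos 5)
Your argument is correct, and it is exactly the standard proof that the paper omits (the lemma sits in the subsection whose results are declared ``standard'' with proofs skipped): average Dirac masses along translated F\o lner sets chosen to nearly realize the sup in the definition of upper Banach density, extract a weak-$\star$ limit via theorem~\ref{inv_measureG}, and use the portmanteau inequality $\mu(D)\ge\limsup_k\mu_{n_k}(D)$ for the closed set $D$. The bookkeeping identity $\mu_n(D)=\frac{|A_x\cap F_ng_n|}{|F_n|}$ correctly reconciles the left action $f(g_n(x))=(fg_n)(x)$ with the right-translate convention $F_ng$ in definition~\ref{densities_G}, which is the only delicate point and you have handled it.
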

\subsection{G-subshifts.}\label{podsekcja42}

 Let $G$ be a countable group and let $\Lambda$ be a finite, discrete space (an alphabet). Let us consider the space $\Lambda^G$. For every $g\in G$ we define the transformation $\sigma(g): \Lambda^G\to \Lambda^G$ given by
\begin{equation}\label{shiftG}
\bigl((\sigma(g))(x)\bigr)(h)=x(hg), \ h\in G.
\end{equation}
Then $\sigma$ is an action of $G$ on $\Lambda^G$. As in the previous subsection we will write $g(x)$ instead of $(\sigma(g))(x)$. By a symbolic system with the action of $G$ (a $G$-subshift) we mean any set $X\subset \Lambda^G$, which is closed and $\sigma$-invariant.

 Let $K\subset G$ be a finite set. By a \emph{block with the domain $K$} we mean an element $C\in\Lambda^K$. For two blocks $C\in\Lambda^K$ and $C'\in\Lambda^{Kg}$ for some $g\in G$ we write $C\approx C'$ if for every $k\in K$ we have $C(k)=C'(kg)$. If for some $x\in X$ and $g\in G$ we have $x|_{Kg}\approx C$, then we say that the block $C$ \emph{occurs in $x$}. By $\mathcal{B}^*(X)$ we denote the set of all finite blocks occurring in points $x\in X$. Similarly, if for a finite set $F\subset G$ and an element $B\in\Lambda^F$, there exists $g\in G$ such that $Kg\subset F$ and $B|_{Kg}\approx C$, then we say that the block $C$ occurs in $B$.
\begin{definition}
	Let $K,F\subset G$ be finite sets. The $K$-\emph{core} of $F$ is the set
	\begin{equation}
	F_K=\{g\in F: Kg\subset F \}.
	\end{equation}
\end{definition}

 The following property of a $K$-core will be useful later in section~\ref{sekcja5} (for the proof see e.g. \cite[lemma 2.6]{DHZ}).
\begin{lemma}\label{lemma-core}
	Let $K,F\subset G$ be finite sets. If $F$ is $(K,\varepsilon)$-invariant, then $\frac{|F_K|}{|F|}>1-\varepsilon|K|$.
\end{lemma}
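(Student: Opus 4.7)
The plan is to bound the size of $F \setminus F_K$ directly by a union bound over the elements of $K$, exploiting the fact that each left-translate $kF$ differs from $F$ by only a small set.

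First, I would unpack the definitions. A point $g \in F$ fails to lie in $F_K$ precisely when there exists some $k \in K$ with $kg \notin F$. So if we set, for each $k \in K$,
\begin{equation*}
A_k = \{g \in F : kg \notin F\},
\end{equation*}
then $F \setminus F_K = \bigcup_{k \in K} A_k$, and the problem reduces to controlling $|A_k|$ for each $k$.

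The key observation is that left multiplication by $k$ is a bijection on $G$, so it sends $A_k$ bijectively to the set $\{kg : g \in F,\ kg \notin F\} = kF \setminus F$. Since $kF \subset KF$, we have $kF \setminus F \subset KF \setminus F \subset KF \triangle F$, and the $(K,\varepsilon)$-invariance of $F$ then yields $|A_k| = |kF \setminus F| \le |KF \triangle F| < \varepsilon |F|$.

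Finally, a straightforward union bound gives
\begin{equation*}
|F \setminus F_K| \le \sum_{k \in K} |A_k| < |K|\,\varepsilon\,|F|,
\end{equation*}
so $|F_K|/|F| > 1 - \varepsilon|K|$, as required. There is no real obstacle here — the whole proof is essentially the identification $A_k \leftrightarrow kF \setminus F$ via the left-translation bijection, followed by a union bound. The only mild care needed is to note that $kF \setminus F$ is a subset of the symmetric difference $KF \triangle F$ and is therefore controlled by $\varepsilon|F|$.
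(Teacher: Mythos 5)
Your proof is correct and is essentially the standard argument (the paper omits the proof of this lemma, referring to \cite[lemma 2.6]{DHZ}, where the same idea appears): identify each ``bad'' set $A_k=\{g\in F: kg\notin F\}$ with $kF\setminus F\subset KF\triangle F$ via left translation, then take a union bound over $k\in K$. No gaps; the only (degenerate) caveat is that the statement implicitly assumes $K\neq\varnothing$, which is needed for the strict inequality and is not an issue with your argument.
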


 Using the notion of a core we can define the frequency of occurrences of one block in another.
\begin{definition}
	Let $K,F\subset G$ be finite sets and let $C\in\Lambda^K$, and $B\in\Lambda^F$. The \emph{frequency of occurrences of the block $C$ in the block $B$} is the number
	\begin{equation}
	\mathrm{Fr}_{B}(C)=\frac{|\{g\in F_K: B|_{Kg}\approx C\}|}{|F|}.
	\end{equation}
\end{definition}
\begin{remark}
	If for finite sets $K,F\subset G$ and every $g\in F$ we have $Kg\not\subset F$, then $F_K=\varnothing$, hence for any $C\in\Lambda^K$ and $B\in\Lambda^F$ we have $\mathrm{Fr}_B(C)=0$.
\end{remark}

 At this point we enumerate a collection $\mathcal{F}(G)$ of all (countably many) finite subsets of~$G$, getting a sequence $(K_l)_{l\in\N}$. With the help of this sequence we can define a pseudometric on the set $\mathcal{B}^*(X)$, as follows
\begin{equation}\label{metric_blocksG}
d^*(B_1,B_2)=\sum_{l=1}^{+\infty}\frac{2^{-l}}{(|K_l|+1)}\sum_{C\in\Lambda^{K_l}}|\mathrm{Fr}_{B_1}(C)-\mathrm{Fr}_{B_2}(C)|, \ \ B_1,B_2\in\mathcal{B}^*(X).
\end{equation}
Observe that $d^*(B,B')=0$ if and only if $B\approx B'$. For every finite set $K\subset G$, with each block $C\in\Lambda^K$ we associate the cylinder $[C]=\{x\in X: x|_K=C\}$. Since characteristic functions of cylinders associated with blocks are linearly dense in the Banach space of all continuous functions on $X$, the following formula 
\begin{equation}\label{metric_measuresG}
d^*(\mu_1,\mu_2)=\sum_{l=1}^{+\infty}\frac{2^{-l}}{(|K_l|+1)}\sum_{C\in\Lambda^{K_l}}|\mu_1([C])-\mu_2([C])|, \ \ \mu_1,\mu_2\in\M(X)
\end{equation}
defines a metric on $\mathcal{M}(X)$, compatible with the weak-$\star$ topology. We also define a distance between a block and a measure by
\begin{equation}\label{metric_block_measureG}
d^*(B,\mu)=\sum_{l=1}^{+\infty}\frac{2^{-l}}{(|K_l|+1)}\sum_{C\in\Lambda^{K_l}}|\mathrm{Fr}_{B}(C)-\mu([C])|, \ \ B\in\mathcal{B}^*(X),\mu\in\M(X).
\end{equation}
The equations~\eqref{metric_blocksG},~\eqref{metric_measuresG} and~\eqref{metric_block_measureG} define a pseudometric on the set $\mathcal{B}^*(X)\cup\M(X)$, which is a metric on the set $\bigl(\mathcal{B}^*(X)/_{\approx}\bigr)\cup\M(X)$. The following theorem is a straightforward generalization of theorem~\ref{blocks_close_to_measures}:
\begin{theorem}\label{blocks_close_to_measuresG}
	Let $(F_n)_{n\in\N}$ be a F{\o}lner sequence in a countable amenable group $G$. For every $\varepsilon>0$ there exists $n_0\in\N$ such that for every $n\ge n_0$ and every block $B\in \Lambda^{F_n}$ occurring in some $x\in X$, the following inequality holds
	\begin{equation*}
	d^*(B,\M_{\sigma}(X))<\varepsilon.
	\end{equation*}
\end{theorem}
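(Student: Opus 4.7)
The plan is to associate to every occurrence of $B$ in some $x\in X$ a concrete approximating measure. After replacing $x$ by a suitable shift, I may assume $B=x|_{F_n}$. Define the empirical measure
\[
\mu_{n,x}=\tfrac{1}{|F_n|}\sum_{g\in F_n}\delta_{g(x)}\in\M(X),
\]
and prove separately that $d^*(B,\mu_{n,x})\to 0$ and $d^*(\mu_{n,x},\M_{\sigma}(X))\to 0$, both uniformly in $x$. The theorem will then follow from the triangle inequality.

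For the first limit, fix $l$, let $K=K_l$, and note that the identity $((\sigma(g))(x))(k)=x(kg)$ yields
\[
\mu_{n,x}([C])=\frac{|\{g\in F_n:\ x|_{Kg}\approx C\}|}{|F_n|}, \qquad \mathrm{Fr}_B(C)=\frac{|\{g\in (F_n)_K:\ B|_{Kg}\approx C\}|}{|F_n|}
\]
for each $C\in\Lambda^K$. The two counts agree on $(F_n)_K$ (since there $Kg\subset F_n$, whence $B|_{Kg}=x|_{Kg}$), and each $g\in F_n\setminus(F_n)_K$ contributes $1$ to at most one $C$; hence
\[
\sum_{C\in\Lambda^K}|\mathrm{Fr}_B(C)-\mu_{n,x}([C])|\le \frac{|F_n\setminus(F_n)_K|}{|F_n|}.
\]
By Lemma~\ref{lemma-core} this is $<\delta|K|$ whenever $F_n$ is $(K,\delta)$-invariant. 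I then split the series defining $d^*$ at a threshold $L$: the tail $l>L$ is controlled via the trivial estimate $\sum_C|\cdot|\le 2$ by $\sum_{l>L}2^{-l}\cdot 2\le 2^{1-L}$, which is $<\varepsilon/2$ for $L$ large. For the head $l\le L$, the F\o lner property provides an index $n_0$ such that $F_n$ is $(K_l,\delta)$-invariant for all $l\le L$ simultaneously when $n\ge n_0$, with $\delta$ chosen so that the finite sum $\sum_{l\le L}\tfrac{2^{-l}}{|K_l|+1}\delta|K_l|\le\delta$ is $<\varepsilon/2$.

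For the second limit, I argue by contradiction. Suppose there exist $\varepsilon>0$ and sequences $n_k\to\infty$ and $x_k\in X$ with $d^*(\mu_{n_k,x_k},\M_{\sigma}(X))\ge\varepsilon$. Because $g(\delta_{x})=\delta_{g(x)}$, the measures $\mu_{n_k,x_k}$ coincide with the averages $\tfrac{1}{|F_{n_k}|}\sum_{g\in F_{n_k}}g(\delta_{x_k})$, so Theorem~\ref{inv_measureG} delivers a weak-$\star$ convergent subsequence whose limit $\mu^*$ belongs to $\M_{\sigma}(X)$. Since the metric $d^*$ on $\M(X)$ is compatible with the weak-$\star$ topology, $d^*(\mu_{n_{k_j},x_{k_j}},\mu^*)\to 0$, contradicting the assumed lower bound. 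The main technical obstacle lies in the first step: isolating the $K$-core/boundary discrepancy uniformly in $x$ and then summing those estimates against the geometric weights $\tfrac{2^{-l}}{|K_l|+1}$ over infinitely many sets $K_l$. Once $\mu_{n,x}$ is known to be close both to $B$ and (via Theorem~\ref{inv_measureG}) to $\M_\sigma(X)$, the conclusion is immediate.
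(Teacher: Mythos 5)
Your proof is correct and is exactly the standard argument that the paper leaves implicit here (it omits the proof, referring to the classical case in \cite[Fact 6.6.1]{Entropy}): compare the block $B=x|_{F_n}$ with the empirical measure $\frac{1}{|F_n|}\sum_{g\in F_n}\delta_{g(x)}$ via the $K_l$-core estimate from Lemma~\ref{lemma-core}, and then use Theorem~\ref{inv_measureG} plus compactness to get uniform proximity of these averages to $\M_\sigma(X)$. Both halves of your argument check out, including the uniformity in $x$ obtained by the subsequence/contradiction device.
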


 The next lemma will be used in section~\ref{sekcja5}. Although it is intuitively obvious, its rigorous proof is unexpectedly technical, hence we provide it whole.
\begin{lemma}\label{close_mod}
	If $F,H\subset G$ are finite and $H$ is an $\varepsilon$-modification of $F$, $\varepsilon>0$, then, for every $x\in X$, we have
	\begin{equation*}
	d^*(x|_F,x|_H)<\varepsilon.
	\end{equation*}
\end{lemma}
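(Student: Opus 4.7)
The plan is to bound the defining series~\eqref{metric_blocksG} level by level, exploiting that passing from $F$ and $H$ to their $K$-cores costs at most a factor of $|K|$, which is absorbed exactly by the normalizer $|K_l|+1$ built into the metric. Fix $l\ge 1$ and set $K=K_l$. For each $C\in\Lambda^K$ introduce the indexing sets $A_C=\{g\in F_K : x|_{Kg}\approx C\}$ and $B_C=\{g\in H_K : x|_{Kg}\approx C\}$; then $\mathrm{Fr}_{x|_F}(C)=|A_C|/|F|$ and $\mathrm{Fr}_{x|_H}(C)=|B_C|/|H|$. Inserting the intermediate fraction $|B_C|/|F|$, the triangle inequality yields
$$
\left|\frac{|A_C|}{|F|}-\frac{|B_C|}{|H|}\right|\le\frac{\bigl||A_C|-|B_C|\bigr|}{|F|}+\frac{|B_C|\cdot\bigl||F|-|H|\bigr|}{|F|\cdot|H|}.
$$

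Summing over $C\in\Lambda^K$, I use that $\{A_C\}_C$ and $\{B_C\}_C$ partition $F_K$ and $H_K$ respectively, so $\sum_C\bigl||A_C|-|B_C|\bigr|\le\sum_C|A_C\triangle B_C|=|F_K\triangle H_K|$. Since $\sum_C|B_C|=|H_K|\le|H|$ and $\bigl||F|-|H|\bigr|\le|F\triangle H|$, the summed right-hand side is at most $|F_K\triangle H_K|/|F|+|F\triangle H|/|F|$. The central geometric ingredient is
$$
|F_K\triangle H_K|\le |K|\cdot|F\triangle H|,
$$
proved as follows: if $g\in F_K\setminus H_K$ then $Kg\subset F$ while some $k\in K$ satisfies $kg\notin H$, so $kg\in F\setminus H$ and $g\in k^{-1}(F\setminus H)$; hence $F_K\setminus H_K\subset\bigcup_{k\in K}k^{-1}(F\setminus H)$, and the symmetric argument handles $H_K\setminus F_K$.

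Combining these estimates,
$$
\sum_{C\in\Lambda^K}\bigl|\mathrm{Fr}_{x|_F}(C)-\mathrm{Fr}_{x|_H}(C)\bigr|\le\frac{(|K|+1)|F\triangle H|}{|F|}.
$$
Division by $|K_l|+1$ erases the combinatorial factor, so each term of~\eqref{metric_blocksG} is bounded by $2^{-l}|F\triangle H|/|F|$; summing $\sum_{l\ge 1}2^{-l}=1$ gives $d^*(x|_F,x|_H)\le|F\triangle H|/|F|<\varepsilon$, as required. The only genuinely nonroutine step I foresee is the core-comparison inequality, and the observation that its factor $|K|+1$ matches precisely the $|K_l|+1$ normalizer in~\eqref{metric_blocksG} is presumably the reason the authors chose that particular weighting and feel the need to write this proof in full.
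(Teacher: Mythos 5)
Your proposal is correct and follows essentially the same route as the paper: the same core-comparison inequality $|F_K\triangle H_K|\le|K|\cdot|F\triangle H|$ with the same proof, and the same intermediate quantity $|B_C|/|F|=\tfrac{|H|}{|F|}\mathrm{Fr}_{x|_H}(C)$ in the triangle inequality, with only cosmetic differences in how the two resulting terms are bounded. The observation that the factor $|K_l|+1$ in the metric exactly absorbs the loss is indeed the point of the normalization.
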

\begin{proof}
	Let $K\subset G$ be a finite set. First, we estimate the cardinality of the set $F_K\triangle H_K$. Observe that $F_K\triangle H_K\subset K^{-1}(F\triangle H)$. Indeed, if $g\in F_K\setminus H_K$ then $kg\in F$ for all $k\in K$ and there exists $k_0\in K$ satisfying $k_0g\notin H$, which implies $g\in k_0^{-1}(F\setminus H)\subset K^{-1}(F\setminus H)$ (by symmetry, $g\in H_K\setminus F_K$ implies $g\in K^{-1}(H\setminus F)$). Thus, $|F_K\triangle H_K|\le |K|\cdot|F\triangle H|$.
	
	 An occurrence $x|_{Kg}\approx C$ of a block $C\in\Lambda^K$ in $x$ is accounted in the computation of $\mathrm{Fr}_{x|_F}(C)$ and not accounted in the computation of $\mathrm{Fr}_{x|_H}(C)$, or vice versa, if and only if $g\in F_K\triangle H_K$. Henceforth, the following inequality holds
	\begin{equation*}
	\sum_{C\in\Lambda^{K}}\bigl||H|\mathrm{Fr}_{x|_H}(C)-|F|\mathrm{Fr}_{x|_F}(C)\bigr|\le |K||F\triangle H|<|K||F|\varepsilon.
	\end{equation*}
Therefore, we obtain
	\begin{multline*}
	d^*(x|_F,x|_H)=\sum_{l=1}^{+\infty}\frac{2^{-l}}{|K_l|+1}\sum_{C\in\Lambda^{K_l}}\bigl|\mathrm{Fr}_{x|_H}(C)-\mathrm{Fr}_{x|_F}(C)\bigr|\\\le \sum_{l=1}^{+\infty}\frac{2^{-l}}{|K_l|+1}\sum_{C\in\Lambda^{K_l}}\Bigl(\Bigl|\mathrm{Fr}_{x|_H}(C)-\frac{|H|}{|F|}\mathrm{Fr}_{x|_H}(C)\Bigr|+\frac{1}{|F|}\Bigl||H|\mathrm{Fr}_{x|_H}(C)-|F|\mathrm{Fr}_{x|_F}(C)\Bigr|\Bigr)\\\le \sum_{l=1}^{+\infty}\frac{2^{-l}}{|K_l|+1}\Bigl(\Bigl|1-\frac{|H|}{|F|}\Bigr|+|K_l|\varepsilon\Bigr).
	\end{multline*}
	Since $H$ is an $\varepsilon$-modification of $F$, we have $|1-\tfrac{|H|}{|F|}|<\varepsilon$, implying ${d^*(x|_F,x|_H)<\varepsilon}$.
\end{proof}

 We finish this subsection with the definition of an $(F_n)$-generic element for an invariant measure, which, in case $\Z=G$ and $F_n=[0,n)$, reduces to definition~\ref{generic}.
\begin{definition}\label{genericG}
	Let $(F_n)_{n\in\N}$ be a F{\o}lner sequence in a countable amenable group $G$. A symbolic element $x\in X\subset \Lambda^G$ is called $(F_n)$-\emph{generic} ($(F_n)$-\emph{quasigeneric}) for a measure $\mu\in\mathcal{M}_{\sigma}(X)$ if the sequence (some subsequence of the sequence) of the blocks $(x|_{F_n})_{n\in\N}$ converges to the measure $\mu$ in the pseudometric $d^*$ on $\mathcal{B}^*(X)\cup\M(X)$.
\end{definition}
\subsection{Tilings of countable amenable groups.}\label{podsekcja43}

 The aim of this subsection is to provide the necessary background concerning the theory of tilings, playing an instrumental role in generalizations of theorems from section~\ref{sekcja2} to the case of countable amenable groups.
\begin{definition}
Let $G$ be a countable group. A \emph{tiling} is a partition $\T$ of $G$ into finite, pairwise disjoint subsets $T\in\T$ (called the \emph{tiles}), such that there exists a finite collection $\mathcal{S}$ (called the \emph{collection of shapes}) of finite sets $S$ (not necessarily all of them different), each of them containing the unit $e$ of $G$, such that every $T\in \T$ has a form $T=Sc$ for some $S\in\mathcal{S}$ and $c\in T$.
\end{definition}

 Given a tiling $\T$, for every tile $T\in\T$ we choose one pair $(S,c)$, where $S\in\mathcal{S}$ and $c\in T$ such that $T=Sc$. We call $S$ the \emph{shape of the tile $T$} and $c$ the \emph{center of the tile $T$}. Every tiling $\T$ can be represented as a symbolic element (also denoted by the same letter $\T$) over the alphabet $V=\{``S": S\in \mathcal{S}\}\cup\{``0"\}$, as~follows
\begin{equation*}
\T(g)=\begin{cases}
\text{``$S$''},& \text{if g is a center of a tile with the shape $S$}, \\
\text{``$0$''},& \text{otherwise}.
\end{cases}
\end{equation*}
\begin{definition}
	Let $\mathcal{S}$ be a collection of shapes and let $V=\{``S": S\in \mathcal{S}\}\cup\{``0"\}$. A~\emph{dynamical tiling} is a closed and shift-invariant set $\mathsf{T}\subset V^G$, consisting of tilings.
\end{definition}

 Needless to say, the orbit closure of any tiling $\T$ is a dynamical tiling.
\begin{definition}
	Let $(\mathsf{T}_{\!k})_{k\in\N}$ be a sequence of dynamical tilings. A \emph{system of dynamical tilings} is a topological joining $(\bT,\bm{\sigma})=\bigvee_{k\in\N}(\mathsf{T}_{\!k},\sigma)$, i.e.\ $\bT$ is a closed, $\bm{\sigma}$-~invariant subset of the product $\prod_{k\in\N}\mathsf{T}_{\!k}$, where $\bm{\sigma}$ is defined by $\bigl(\bm{\sigma}(g)\bigr)(\T_1,\T_2,\dots)=\bigl((\sigma(g))(\T_1),(\sigma(g))(\T_2),\dots\bigr)$. For brevity, a system of dynamical tilings will be sometimes denoted by $\bT=\bigvee_{k\in\N}\mathsf{T}_{\!k}$ and instead of $\bigl(\bm{\sigma}(g)\bigr)(\bm{\T})$ we will write $g(\bm{\T})$, $g\in G$.
\end{definition}
\begin{definition}
	Let $\bT=\bigvee_{k\in\N}\mathsf{T}_{\!k}$ be a system of dynamical tilings of $G$ and let $\mathcal{S}_k$ denote the collection of shapes of $\mathsf{T}_{\!k}$. We say that the system of tilings $\bT$ is:
	\begin{enumerate}
		\item [1)] \emph{F{\o}lner}, if the collection of shapes $\bigcup_{k\in\N}\mathcal{S}_k$ arranged in a sequence is a F{\o}lner sequence;
		\item [2)] \emph{congruent}, if for every $\bm{\T}=(\T_k)_{k\in\N}\in\bT$ and each $k\in\N$, every tile $T\in\T_{k+1}$ is a union of some tiles of $\T_k$;
		\item [3)] \emph{deterministic}, if it is congruent and for every $k\in\N$ and every shape $S'\in\mathcal{S}_{k+1}$, there exist sets $C_S(S')$ indexed by the shapes $S\in\mathcal{S}_k$, such that
		\begin{equation*}
		S'=\bigcup_{S\in\mathcal{S}_k}\bigcup_{c\in C_S(S')}Sc,
		\end{equation*}
		and for each $\bm{\T}=(\T_l)_{l\in\N}\in\bT$, if $S'c'$ is a tile of $\T_{k+1}$, then for every $S\in\mathcal{S}_k$ and $c\in C_S(S')$, the set $Scc'$ is a tile of $\T_k$.
	\end{enumerate}
\end{definition}
\noindent A deterministic system $\bT$ of dynamical tilings has the property that for every  $\bm{\T}=(\T_k)_{k\in\N}\in\bT$ and $k\in \N$, each tiling $\T_{k}$ uniquely determines the tilings $\T_{k'}$ for $k'\le k$.

 The following useful theorem can be found in \cite[theorem~5.2]{DHZ}.
\begin{theorem}
	For every countable amenable group $G$ there exists a F{\o}lner, deterministic system of dynamical tilings of $G$.
\end{theorem}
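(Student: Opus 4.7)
The plan is to build the system $\bT = \bigvee_{k\in\mathbb{N}}\mathsf{T}_{\!k}$ inductively, producing one level at a time and making level $k+1$ a coarsening of level $k$. First I would fix an exhausting sequence $K_1 \subset K_2 \subset \cdots$ of finite subsets of $G$ with $\bigcup_k K_k = G$, together with tolerances $\varepsilon_k \downarrow 0$ chosen small enough that at each stage $(K_k, \varepsilon_k)$-invariance will propagate through the bookkeeping errors from previous stages. The shapes $\mathcal{S}_k$ will be drawn from a finite family of $(K_k, \varepsilon_k)$-invariant sets, so that arranged in a single sequence $\bigcup_k \mathcal{S}_k$ forms a F{\o}lner sequence in $G$.

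For the base step, I would construct $\mathsf{T}_{\!1}$ using the Ornstein--Weiss quasi-tiling technique: given a very invariant set $S_1$, a greedy selection of well-separated translates $S_1 c$ produces a quasi-tiling covering a $(1-\varepsilon_1)$-fraction of $G$. The standard upgrade to an exact tiling proceeds by iterating this with a finite family of shapes $\mathcal{S}_1$ (each more invariant than the previous), using the remnant region left uncovered at one step as the ground for the next step; after finitely many iterations the uncovered set is empty. To get a dynamical rather than a single tiling, the construction is performed equivariantly (or one passes to the orbit closure of one tiling, verifying that all elements of that closure remain exact tilings with shapes from $\mathcal{S}_1$).

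The inductive step is the crux. Having built $\mathsf{T}_{\!k}$, I need the shapes of $\mathsf{T}_{\!k+1}$ to be exact unions of $\mathcal{S}_k$-translates coming from $\mathsf{T}_{\!k}$ (congruence), and simultaneously to be $(K_{k+1}, \varepsilon_{k+1})$-invariant. The trick is to first pick candidate shapes $S'$ that are vastly more invariant than the shapes in $\mathcal{S}_k$ (so $\varepsilon_k |S_k| \ll \varepsilon_{k+1}|S'|$), then, for each $\bm{\T} \in \bT$ built so far, \emph{round} the boundary of each intended super-tile $S'c'$ to the nearest union of tiles of $\T_k$ using the $K$-core estimate (lemma~\ref{lemma-core}) and lemma~\ref{close_mod}; only a $O(\varepsilon_k |K_k|)$-fraction of cells near the boundary is reassigned, which stays within the $\varepsilon_{k+1}$-budget. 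Then repeat the Ornstein--Weiss quasi-tiling/exact-tiling construction on this coarser scale. For determinism, before running the construction I would fix once and for all, for each candidate shape $S' \in \mathcal{S}_{k+1}$, a single decomposition $S' = \bigcup_{S\in\mathcal{S}_k}\bigcup_{c \in C_S(S')} Sc$; by insisting that the rounding procedure is equivariant and canonical, the chosen decomposition of the shape dictates the decomposition of every translate appearing in $\mathsf{T}_{\!k+1}$.

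The main obstacle will be the interplay between exactness and congruence: Ornstein--Weiss naturally delivers quasi-tilings, and the uncovered remnants need to be tiled again using the \emph{same} level-$k$ tiles, not freely chosen cells. Controlling the propagation of boundary errors from stage to stage, so that the combined error after infinitely many coarsening steps does not destroy the F{\o}lner property of any shape, is the delicate part; this is precisely what the careful $(\varepsilon_k)$-regime and lemma~\ref{lemma-core} are designed to guarantee. Once this is in place, the joining $\bT = \bigvee_k \mathsf{T}_{\!k}$ is defined as the closure of the orbit of a single coherent sequence $(\T_k)_{k\in\mathbb{N}}$, and by construction it is F{\o}lner, congruent, and deterministic.
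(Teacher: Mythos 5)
First, note that the paper does not prove this statement at all: it is quoted verbatim from \cite[theorem~5.2]{DHZ}, so there is no in-paper proof to compare against. Your sketch does follow the same general route as the cited proof (Ornstein--Weiss quasi-tilings, an upgrade to exact tilings, then an inductive congruent coarsening), but it contains two genuine gaps at exactly the points where the real difficulty of the theorem lies.

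The first gap is the exactness step. You claim that iterating the Ornstein--Weiss covering construction on the uncovered remnant, with a finite family of increasingly invariant shapes, terminates with an empty remnant after finitely many rounds. It does not: each round only reduces the uncovered fraction by a multiplicative factor, so after $N$ rounds the remnant has density at most $(1-\varepsilon)^N$ but is never empty (indeed, for a general amenable group no finite family of prescribed shapes tiles $G$ exactly --- this is precisely why only quasi-tilings were known before \cite{DHZ}). The actual mechanism for exactness is different: one first disjointifies the $\varepsilon$-disjoint quasi-tiles and then absorbs the leftover points into adjacent tiles via a variant of Hall's marriage theorem, at the cost of replacing each shape by an $\varepsilon$-modification of itself (and one must separately argue that the resulting collection of shapes stays finite). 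The paper itself points to this marriage-theorem step in the remark following theorem~\ref{thm1G}. Your sketch, as written, never produces an exact partition.

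The second gap is determinism. Fixing ``once and for all'' a decomposition $S'=\bigcup_{S}\bigcup_{c\in C_S(S')}Sc$ of each shape $S'\in\mathcal S_{k+1}$ and ``insisting the rounding is canonical'' does not suffice, because in the dynamical tiling the same geometric shape $S'$ can occur as a super-tile sitting over two different configurations of level-$k$ tiles; both occurrences cannot be forced into the one prescribed decomposition. The standard fix is to make the internal level-$k$ decomposition part of the label of the shape (i.e., to declare super-tiles with different decompositions to be different shapes, keeping the collection finite), so that the implication ``$S'c'$ is a tile of $\T_{k+1}$ $\Rightarrow$ $Scc'$ is a tile of $\T_k$'' holds by definition. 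Without this, congruence may survive but determinism in the sense of the paper's definition does not follow from your construction.
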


 We finish this section by providing a simplified version of \cite[lemma~3.4]{DHZ} and \cite[lemma~4.15]{DZ}, concerning the lower Banach density in the context of a fixed tiling $\T$.
\begin{lemma}\label{density_tiling}
	Let $\T$ be a tiling of a countable group $G$. If a subset $A\subset G$ satisfies $\frac{|T\cap A|}{|T|}\ge1-\varepsilon$, for every tile $T\in\T$ and some $\varepsilon>0$, then $\underline{d}_{\mathsf{Ban}}(A)\ge1-\varepsilon$.
\end{lemma}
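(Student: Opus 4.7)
The plan is to reduce the lemma to showing that, for every $\delta>0$, there exists a finite set $F\subset G$ with $|A\cap Fg|/|F|\ge 1-\varepsilon-\delta$ for every $g\in G$; taking the supremum over $F$ in the definition of $\underline{d}_{\mathsf{Ban}}$ and then letting $\delta\to 0$ will then give the desired bound $\underline{d}_{\mathsf{Ban}}(A)\ge 1-\varepsilon$.

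Fix $\delta>0$ and set $K=\bigcup_{S\in\mathcal{S}}S$, where $\mathcal{S}$ is the finite collection of shapes of $\T$. Since $G$ is amenable, a F{\o}lner sequence exists, so I can pick a finite $F\subset G$ that is $(KK^{-1},\eta)$-invariant with $\eta=\delta/|KK^{-1}|$. Lemma~\ref{lemma-core} then yields $|F\setminus F_{KK^{-1}}|/|F|<\delta$, and the identity $(Fg)_{KK^{-1}}=F_{KK^{-1}}g$ (immediate from the definition of the core) transfers this estimate to every right-translate $Fg$.

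Now, for fixed $g\in G$, partition the tiles of $\T$ meeting $Fg$ into \emph{interior} tiles (those satisfying $T\subset Fg$) and \emph{boundary} tiles (those intersecting $Fg$ but not contained in it). The interior tiles are pairwise disjoint and fit inside $Fg$, so by the hypothesis on $A$, they collectively contribute at most $\varepsilon|F|$ to $|A^{\mathsf{c}}\cap Fg|$. Writing $V(g)$ for the union of the sets $T\cap Fg$ over boundary tiles $T$, we thus obtain
$$|A^{\mathsf{c}}\cap Fg|\le \varepsilon|F|+|V(g)|.$$
The heart of the argument will be to show $V(g)\subset Fg\setminus (Fg)_{KK^{-1}}$ for every $g$. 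Indeed, if $h\in V(g)$ lies in a boundary tile $Sc$, then $h=s_1 c$ for some $s_1\in S$, and there exists $s_2\in S$ with $s_2 c\notin Fg$; hence the element $s_2 s_1^{-1}h\in KK^{-1}h$ fails to lie in $Fg$, which forces $h\notin(Fg)_{KK^{-1}}$. Together with the invariance of $F$, this gives $|V(g)|/|F|<\delta$ uniformly in $g$, and combining with the previous bound yields $|A\cap Fg|/|F|\ge 1-\varepsilon-\delta$.

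The main obstacle is the geometric inclusion $V(g)\subset Fg\setminus (Fg)_{KK^{-1}}$, which encodes the simple but essential fact that any two elements of a common tile are within $KK^{-1}$ of one another, so a tile can fail to be entirely inside $Fg$ only when its points sit near the $KK^{-1}$-boundary of $Fg$. Once this inclusion is verified, the remainder is a routine core-versus-boundary estimate, together with the appeal to amenability to produce sufficiently invariant finite sets.
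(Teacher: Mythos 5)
Your proof is correct and follows essentially the same core-versus-boundary-tile argument as the proof of \cite[lemma 3.4]{DHZ}, which the paper cites in lieu of giving its own proof: the key inclusion $V(g)\subset Fg\setminus (Fg)_{KK^{-1}}$ and the transfer of the core estimate from $F$ to its translates $Fg$ are both verified correctly, and summing the hypothesis over the interior tiles gives exactly the bound $|A^{\mathsf{c}}\cap Fg|\le\varepsilon|F|+|V(g)|$ you claim. The only caveat is that you invoke amenability of $G$ to produce a $(KK^{-1},\eta)$-invariant set $F$, whereas the lemma is stated for an arbitrary countable group; this is harmless here, since the lemma is only applied to amenable groups (and the cited source works in that setting), but it deserves an explicit mention.
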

\section{Decomposition of a symbolic element over $G$ into ergodic blocks}\label{sekcja5}

 This section contains generalizations of theorems from section~\ref{sekcja2} to the case of symbolic systems with the action of a countable amenable group. In what follows, $G$ denotes a countable amenable group, $(X,\sigma)$ denotes a symbolic system with the shift action $\sigma$ of $G$ and $\bT=\bigvee_{k\in\N}\mathsf{T}_{\!k}$ is a F{\o}lner, deterministic system of dynamical tilings of $G$. We let $\mathcal{S}_k$ denote the collection of shapes of $\mathsf{T}_k$. We define $\bm{X}=X\times \bT$. On the space $\bm{X}$ we will consider actions of two groups, $G\times G$, given by $(g,h)(x,\bm{\T})=(g(x),h(\bm{\T}))$, and of $G$, given by $g(x,\bm{\T})=(g(x),g(\bm{\T}))$. By $\M_{(G\times G)}(\bm{X})$ we will denote the set of $(G\times G)$-invariant measures, i.e.\ measures on $\bm{X}$ which are $(g,h)$-invariant for every $(g,h)\in G\times G$, whereas $\M_{G}(\bm{X})$ will stand for the set of $G$-invariant measures, i.e.\ measures on $\bm{X}$, which are $(g,g)$-invariant for every $g\in G$.

 For a fixed $\bm{\T}=(\T_k)_{k\in\N}\in\bT$ and $g\in G$, by $T_k^g(\bm{\T})$ we will denote the unique tile belonging to $\T_k$, such that $g\in T$. In particular, by $T_k^e(\bm{\T})$ we will denote the \emph{central} tile $T\in\T_k$ containing the unit $e$. The simplified notation $T_k^g$ in place of $T_k^g(\bm{\T})$ always refers to the last sequence of tilings $\bm{\T}$ mentioned in the text prior to the discussed $T_k^g$. 

 We begin with a series of lemmas. The first of them concerns disintegrations of $(G\times G)$-invariant measures. For details of the theory of disintegration of measures, we refer the reader e.g. to \cite{Helson}.
\begin{lemma}\label{disint}
	If $\mu$ is a $(G\times G)$-invariant measure on $\bm{X}$ and $\{\mu_{\bm{\T}}: \bm{\T}\in \bT\}$ is a disintegration of $\mu$ with respect to the marginal measure $\mu_{\bT}$ on $\bT$, then for $\mu_{\bT}$-almost every $\bm{\T}\in\bT$, $\mu_{\bm{\T}}$ is a $\sigma$-invariant measure on $X$.
\end{lemma}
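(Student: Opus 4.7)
The plan is to use the (essential) uniqueness of disintegration together with the fact that $(G\times G)$-invariance is strictly stronger than invariance under the ``first-coordinate'' subgroup $G_1=\{(g,e):g\in G\}\subset G\times G$, whose action on $\bm{X}$ fixes the projection onto $\bT$ pointwise. Heuristically, acting by $(g,e)$ only rearranges each fiber $\{(x,\bm{\T}):x\in X\}$ within itself, so the pushed-forward measures $g_\ast\mu_{\bm{\T}}$ must, by uniqueness, coincide with $\mu_{\bm{\T}}$ for $\mu_{\bT}$-a.e.\ $\bm{\T}$.

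I would make this precise by working with cylinders, which form a countable family generating the Borel $\sigma$-algebra on $X\subset\Lambda^G$. Fix $g\in G$ and a cylinder $[C]\subset X$ associated with a block $C\in\Lambda^K$ for some finite $K\subset G$, and let $A\subset\bT$ be an arbitrary Borel set. Using $(g,e)$-invariance of $\mu$ and the identity $(g,e)^{-1}([C]\times A)=g^{-1}([C])\times A$, I compute
\begin{align*}
\int_A \mu_{\bm{\T}}\bigl(g^{-1}([C])\bigr)\,\mathrm{d}\mu_{\bT}(\bm{\T})
&= \mu\bigl(g^{-1}([C])\times A\bigr) = \mu\bigl((g,e)^{-1}([C]\times A)\bigr) \\
&= \mu\bigl([C]\times A\bigr) = \int_A \mu_{\bm{\T}}\bigl([C]\bigr)\,\mathrm{d}\mu_{\bT}(\bm{\T}).
\end{align*}
Since $A\subset\bT$ was arbitrary Borel, this yields $\mu_{\bm{\T}}(g^{-1}([C]))=\mu_{\bm{\T}}([C])$ for all $\bm{\T}$ outside a $\mu_{\bT}$-null set $N_{g,C}$.

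To conclude, since $G$ is countable and the collection of cylinders in $X$ is countable as well, the exceptional set $\bigcup_{g,C}N_{g,C}$ is still $\mu_{\bT}$-null. On its complement, the equality $\mu_{\bm{\T}}([C])=\mu_{\bm{\T}}(g^{-1}([C]))$ holds simultaneously for every cylinder $[C]\subset X$ and every $g\in G$. Because the algebra of finite disjoint unions of cylinders is a $\pi$-system generating the Borel $\sigma$-algebra of $X$, a routine monotone-class (or $\pi$-$\lambda$) argument promotes this to $\mu_{\bm{\T}}(B)=\mu_{\bm{\T}}(g^{-1}(B))$ for every Borel $B\subset X$ and every $g\in G$, which is exactly the $\sigma$-invariance of $\mu_{\bm{\T}}$. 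The only mildly delicate point is guaranteeing the existence of the disintegration and the natural identification of fiber measures with Borel probability measures on $X$, but this is standard since $\bm{X}$ is a compact metric space and $\bT$ is Polish.
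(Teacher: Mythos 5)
Your proof is correct and follows essentially the same route as the paper: both exploit $(g,e)$-invariance of $\mu$ together with the fact that this subgroup acts trivially on the $\bT$-coordinate, and then use countability of $G$ to combine the exceptional null sets. The only difference is cosmetic --- the paper invokes essential uniqueness of the disintegration to conclude $g(\mu_{\bm{\T}})=\mu_{\bm{\T}}$ a.e., whereas you unwind that uniqueness by hand on the countable family of cylinders and finish with a $\pi$-$\lambda$ argument.
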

\begin{proof}
	Let $g\in G$ be fixed. By the definition of a disintegration of a measure, for every measurable function $\Phi$ on $\bm{X}$ we have
	\begin{equation*}
	\int_{X\times\bT}\Phi(x,\bm{\T})\mathrm{d}\mu(x,\bm{\T})=\int_{\bT}\int_X\Phi(x,\bm{\T})\mathrm{d}\mu_{\bm{\T}}(x)\mathrm{d}\mu_{\bT}(\bm{\T}).
	\end{equation*}
Using the $(G\times G)$-invariance of $\mu$, we obtain
	\begin{multline*}
	\int_{X\times\bT}\Phi(x,\bm{\T})\mathrm{d}\mu(x,\bm{\T})=\int_{X\times\bT}\Phi(g(x),e(\bm{\T}))\mathrm{d}\mu(x,\bm{\T})\\=\int_{\bT}\int_X\Phi(g(x),\bm{\T})\mathrm{d}\mu_{\bm{\T}}(x)\mathrm{d}\mu_{\bT}(\bm{\T})=\int_{\bT}\int_X\Phi(y,\bm{\T})\mathrm{d}\mu_{\bm{\T}}(g^{-1}(y))\mathrm{d}\mu_{\bT}(\bm{\T})\\=\int_{\bT}\int_X\Phi(y,\bm{\T})\mathrm{d}(g(\mu_{\bm{\T}}))(y)\mathrm{d}\mu_{\bT}(\bm{\T}).
	\end{multline*}
	We have shown that $\bm{\T}\mapsto g(\mu_{\bm{\T}})$ is also a disintegration of $\mu$. By uniqueness of the disintegration, the equality
	$\mu_{\bm{\T}}=g(\mu_{\bm{\T}})$ holds for $\mu_{\bT}$-almost every $\bm{\T}$. Since there are countably many elements $g\in G$, for $\mu_{\bT}$-almost every $\bm{\T}$ the measure $\mu_{\bm{\T}}$ is $\sigma$-invariant.
\end{proof}
\par The next two lemmas are analogs of lemmas~\ref{Dum} and~\ref{Eumn} from section~\ref{sekcja2}.
\begin{lemma}\label{measure_zero}
	Let $\mathcal{U}\supset \M_{\sigma}^{\mathsf{erg}}(X)$ be an open set in $\mathcal{B}^*(X)\cup\mathcal{M}_{\sigma}(X)$ and let $m\in\N$. The set
	\begin{equation*}
	\bm{D}_{\U,m}=\left\{(x,\bm{\T})\in \bm{X}: \forall_{k\ge m}\hspace{3pt} x|_{T_k^e}\notin \mathcal{U}\right\}
	\end{equation*}
	is a null set for every $(G\times G)$-invariant measure on $\bm{X}$.
\end{lemma}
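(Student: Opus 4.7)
The plan is to adapt the short argument of Lemma~\ref{Dum} to the $G$-setting: use the disintegration furnished by Lemma~\ref{disint} to reduce the statement to a standard ergodic-decomposition argument for a $\sigma$-invariant measure on $X$, and then produce a contradiction by finding a point that is generic (along the central tiles of some $\bm{\T}$) for an ergodic measure that lies in $\U$.

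Suppose, for contradiction, that $\mu(\bm{D}_{\U,m})>0$ for some $(G\times G)$-invariant probability measure $\mu$ on $\bm{X}$. Disintegrate $\mu$ over its marginal $\mu_{\bT}$ as $\mu=\int_{\bT}\mu_{\bm{\T}}\otimes\delta_{\bm{\T}}\,d\mu_{\bT}(\bm{\T})$; by Lemma~\ref{disint}, $\mu_{\bm{\T}}$ is $\sigma$-invariant for $\mu_{\bT}$-almost every $\bm{\T}$. Writing the fiber $D(\bm{\T})=\{x\in X:\ x|_{T_k^e(\bm{\T})}\notin\U\text{ for every }k\ge m\}$, the assumption $\mu(\bm{D}_{\U,m})>0$ forces the existence of some $\bm{\T}$ for which $\mu_{\bm{\T}}$ is $\sigma$-invariant and $\mu_{\bm{\T}}(D(\bm{\T}))>0$. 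The classical ergodic decomposition of $\mu_{\bm{\T}}$ then produces an ergodic measure $\nu\in\Me[X]$ with $\nu(D(\bm{\T}))>0$. By congruence the central tiles are nested, $T_m^e(\bm{\T})\subset T_{m+1}^e(\bm{\T})\subset\cdots$, and since each is a right-translate of a shape in $\mathcal{S}_k$ (while right-translation preserves $(K,\varepsilon)$-invariance, and $\bT$ is F{\o}lner), the sequence $(T_k^e(\bm{\T}))_{k\ge m}$ is itself a F{\o}lner sequence in $G$. Applying Lindenstrauss's pointwise ergodic theorem along a tempered subsequence of $(T_k^e(\bm{\T}))_k$ to each indicator $\mathbf{1}_{[C]}$ and summing against the geometric weights in the definition of $d^*$, one obtains that for $\nu$-almost every $x\in X$, $d^*(x|_{T_k^e(\bm{\T})},\nu)\to 0$.

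Choosing such an $x$ inside $D(\bm{\T})$ (the intersection is nonempty because $\nu(D(\bm{\T}))>0$) completes the contradiction: $\nu$ is ergodic, hence $\nu\in\U$, and since $\U$ is open in the $d^*$-topology, $x|_{T_k^e(\bm{\T})}$ must lie in $\U$ for all sufficiently large $k$, contrary to the definition of $D(\bm{\T})$. The main obstacle is the generic-point step: one needs a pointwise ergodic theorem along the \emph{specific} F{\o}lner sequence of central tiles carved out by the chosen $\bm{\T}$, a sequence whose structure depends intrinsically on $\bm{\T}$. Lindenstrauss's theorem handles this after passing to a tempered subsequence; alternatively, one can exploit the determinism and nesting of $\bT$ to build a generic point directly from the self-similar structure of the tilings, avoiding any appeal to tempering.
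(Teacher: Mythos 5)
Your argument is correct and follows essentially the same route as the paper's proof: disintegrate over the marginal $\mu_{\bT}$, invoke Lemma~\ref{disint} to get $\sigma$-invariance of a fiber measure charging the fiber set, pass to an ergodic component, and produce a point of that set which is (quasi)generic along the F{\o}lner sequence of central tiles, contradicting the definition of $\bm{D}_{\U,m}$. The only cosmetic remark is that Lindenstrauss's theorem applied along a tempered subsequence gives convergence only along that subsequence (i.e.\ quasigenericity rather than the full-sequence convergence you state), but this is exactly what the paper uses and is all that is needed to find a single $k\ge m$ with $x|_{T_k^e}\in\U$.
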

\begin{proof} It is not hard to see that $\bm{D}_{\U,m}$ is a Borel (in fact clopen) subset of $\bm{X}$. Suppose that for some $(G\times G)$-invariant measure $\mu$ on $\bm{X}$, we have $\mu(\bm{D}_{\U,m})>0$. Let $\{\mu_{\bm{\T}}: \bm{\T}\in \bT\}$ be the disintegration of the measure $\mu$ with respect to $\mu_{\bT}$. By lemma~\ref{disint}, $\mu_{\bT}$-almost all measures $\mu_{\bm{\T}}$ are $\sigma$-invariant. Thus, the following holds:
	\begin{equation*}
	0<\mu(\bm{D}_{\U,m})=\int_{\bT}\int_{X}\bm{1}_{\bm{D}_{\U,m}}(x,\bm{\T})\mathrm{d}\mu_{\bm{\T}}(x)\mathrm{d}\mu_{\bT}(\bm{\T}).
	\end{equation*}
	Hence, there exists $\bm{\T}\in\bT$ such that both
	\begin{equation*}
	\int_{X}\bm{1}_{\bm{D}_{\U,m}}(x,\bm{\T})\mathrm{d}\mu_{\bm{\T}}(x)>0
	\end{equation*}
	and the measure $\mu_{\bm{\T}}$ is $\sigma$-invariant. We have shown that the set $$D_{\U,m,\bm{\T}}=\bigl\{x\in X: \forall_{k\ge m}\hspace{3pt} x|_{T_k^e}\notin \mathcal{U}\bigr\}$$ has positive measure for a $\sigma$-invariant measure. Thus, there also exists an ergodic measure $\mu_0$ on $X$ such that $\mu_0(D_{\U,m,\bm{\T}})>0$. Hence, by the ergodic theorem, there exists an element $x\in D_{\U,m,\bm{\T}}$, which is quasigeneric\footnote{The existence of an $(F_n)$-quasigeneric element for an ergodic measure can be deduced from Lindenstrauss' ergodic theorem (see \cite{Lindenstrauss}). It also follows from the much more elementary mean ergodic theorem combined with the fact that any sequence of functions convergent in measure has an almost-everywhere convergent subsequence.} for $\mu_0$, along the F{\o}lner sequence $(T^e_k)_{k\in\N}$ consisting of the central tiles of $\bm{\T}$. So, there exists $k\ge m$ such that $x|_{T_k^e}\in \U$, which stands in contradiction with the definition of $D_{\U,m,\bm{\T}}$.
\end{proof}
\begin{lemma}\label{EumnG}
	Let $\U\supset \M_{\sigma}^{\mathsf{erg}}(X)$ be an open set in $\mathcal{B}^*(X)\cup\mathcal{M}(X)$ and let $m\in\N$. For every $n\ge m$ we define the set
	\begin{equation*}
	\bm{D}_{\U,m,n}=\{(x,\bm{\T})\in \bm{X}: \forall_{m\le k\le n}\hspace{3pt} x|_{T_k^e}\notin \U\}.
	\end{equation*}
	Furthermore, for every pair $(x,\bm{\T})\in \bm{X}$ we define
	\begin{equation*}
	\bm{E}_{\U,m,n,x,\bm{\T}}=\{(g,h)\in G\times G: (g(x),h(\bm{\T}))\in \bm{D}_{\U,m,n}\}.
	\end{equation*}
	Then the convergence $\lim\limits_{n\to+\infty}\overline{d}_{\mathsf{Ban}}(\bm{E}_{\U,m,n,x,\bm{\T}})=0$ holds uniformly on $\bm{X}$, where the upper Banach density is calculated in $G\times G$.
\end{lemma}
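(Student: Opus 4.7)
The plan is to run the classical argument from Lemma~\ref{Eumn} verbatim on the product system $(\bm{X},G\times G)$, with Lemma~\ref{measure_zero} playing the role that Lemma~\ref{Dum} played in the $\Z$-case. First, I would observe that $(\bm{D}_{\U,m,n})_{n\ge m}$ is manifestly nested downward (imposing the restriction for one more value of $k$ only shrinks the set), and hence so is $(\bm{E}_{\U,m,n,x,\bm{\T}})_{n\ge m}$ for every fixed $(x,\bm{\T})$. Consequently, the sequence of upper Banach densities $\overline{d}_{\mathsf{Ban}}(\bm{E}_{\U,m,n,x,\bm{\T}})$ is nonincreasing in $n$.

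Next I would check that each $\bm{D}_{\U,m,n}$ is clopen in $\bm{X}$: the central tile $T_k^e(\bm{\T})$ is read off from finitely many coordinates of $\bm{\T}$ (enough to determine which shape in $\mathcal{S}_k$ is placed at $e$), and once the shape $S\in\mathcal{S}_k$ is fixed, the condition $x|_S\notin\U$ depends only on finitely many coordinates of $x$. It follows that, for every Borel probability measure $\mu$ on $\bm{X}$, the function $\bm{\Phi}_{\U,m,n}(\mu)=\mu(\bm{D}_{\U,m,n})$ is continuous in the weak-$\star$ topology. Since $\bm{D}_{\U,m}=\bigcap_{n\ge m}\bm{D}_{\U,m,n}$, Lemma~\ref{measure_zero} combined with continuity of measures from above gives $\bm{\Phi}_{\U,m,n}(\mu)\to 0$ pointwise on $\M_{(G\times G)}(\bm{X})$. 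This set is compact in the weak-$\star$ topology, the functions $\bm{\Phi}_{\U,m,n}$ are continuous and monotone nonincreasing, and the limit function $0$ is continuous, so Dini's theorem forces uniform convergence on $\M_{(G\times G)}(\bm{X})$.

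Finally, since the product group $G\times G$ is countable and amenable (as a product of countable amenable groups), Lemma~\ref{upper_banach_density_invariantG} applies to the topological dynamical system $(\bm{X},G\times G)$ and the closed set $\bm{D}_{\U,m,n}$, yielding
\begin{equation*}
\sup_{(x,\bm{\T})\in\bm{X}}\overline{d}_{\mathsf{Ban}}(\bm{E}_{\U,m,n,x,\bm{\T}})\le\sup_{\mu\in\M_{(G\times G)}(\bm{X})}\mu(\bm{D}_{\U,m,n}),
\end{equation*}
where the Banach densities on the left are computed in $G\times G$. The right-hand side tends to $0$ as $n\to+\infty$ by the uniform convergence obtained above, so the functions $(x,\bm{\T})\mapsto \overline{d}_{\mathsf{Ban}}(\bm{E}_{\U,m,n,x,\bm{\T}})$ converge to $0$ uniformly on $\bm{X}$, as required. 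I do not anticipate a serious obstacle: the argument is structurally identical to Lemma~\ref{Eumn}, and the only genuinely new ingredients are the replacement of the ergodic-decomposition/Birkhoff step by Lemma~\ref{measure_zero} and the (routine) verification that the relevant clopenness and amenability statements remain valid on the joining $\bm{X}=X\times\bT$ under the $(G\times G)$-action.
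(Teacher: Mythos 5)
Your proposal is correct and follows essentially the same route as the paper's own proof: nestedness of the sets, clopenness of $\bm{D}_{\U,m,n}$, continuity of $\mu\mapsto\mu(\bm{D}_{\U,m,n})$, Lemma~\ref{measure_zero} plus continuity from above for pointwise convergence, Dini's theorem for uniformity on $\M_{(G\times G)}(\bm{X})$, and Lemma~\ref{upper_banach_density_invariantG} applied to the $(G\times G)$-action to conclude. Your explicit remark that $G\times G$ is countable amenable is a detail the paper leaves implicit, but the argument is otherwise identical.
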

\begin{proof} Clearly, the sets $\bm{D}_{\U,m,n}$ form a nested sequence with respect to $n$. Hence, also, for each $(x,\bm{\T})\in \bm{X}$, the sets $\bm{E}_{\U,m,n,x,\bm{\T}}$ form a nested sequence. Thus, the sequence of numbers $(\overline{d}_{\mathsf{Ban}}(\bm{E}_{\U,m,n,x,\bm{\T}}))_{n\in\N}$ is nonincreasing. Observe that 
	\begin{equation*}
	\bm{D}_{\U,m,n}=\bigcap_{i=m}^n\bigcup_{S\in\mathcal{S}_{i}}\bigcup_{s\in S}(\left\{x\in X: x|_{Ss^{-1}}\notin \mathcal{U}\right\}\times\{\bm{\T}\in\bT: T^e_k=Ss^{-1} \})
	\end{equation*}
	is a clopen subset of $\bm{X}$. Therefore, the characteristic functions $\bm{1}_{\bm{D}_{\U,m,n}}$ are continuous on $\bm{X}$ and, consequently, for every $n\ge m$, the function ${\mu\mapsto\bm{\Phi}_{\U,m,n}(\mu)=\mu(\bm{D}_{\U,m,n})}$ is continuous on $\M_{(G\times G)}(\bm{X})$. Moreover, the descending intersection $\bigcap_{n\ge m}\overline{D}_{\U,m,n}=\overline{D}_{\U,m}$ is, by lemma~\ref{measure_zero}, a null set for every $(G\times G)$-invariant measure. Thereupon, by the continuity of measures from above, the sequence $(\bm{\Phi}_{\U,m,n})_{n\ge m}$ converges to the constant function equal to $0$, pointwise, on the compact set $\M_{(G\times G)}(\bm{X})$. Since the sequence $(\bm{\Phi}_{\U,m,n})_{n\ge m}$ is nonincreasing, by Dini's theorem, it converges to $0$ uniformly on $\M_{(G\times G)}(\bm{X})$. Thus, $$\lim\limits_{n\to+\infty} \sup\bigl\{\mu(\overline{D}_{\U,m,n}): \mu\in\mathcal{M}_{(G\times G)}(\bm{X})\bigr\}=0.$$ By lemma~\ref{upper_banach_density_invariantG}, this implies that $ \sup_{(x,\bm{\T})\in\bm{X}}\overline{d}_{\mathsf{Ban}}(\bm{E}_{\U,m,n,x,\bm{\T}})$ tends to $0$ as $n\to+\infty$, and consequently, the sequence of functions $(x,\bm{\T})\mapsto \overline{d}_{\mathsf{Ban}}(\bm{E}_{\U,m,n,x,\bm{\T}})$ converges to $0$ as $n\to+\infty$, uniformly on~$\bm{X}$.
\end{proof}

 In the proof of the main theorem of this section (i.e.\ theorem~\ref{thm1G}) we use also the following, technical lemma.
\begin{lemma}\label{Foelner_in_group}
	Let $K\subset G$ be a finite set and let $F\subset G$ be $(K,\tfrac{\varepsilon}2)$-invariant. Then the set
	\begin{equation}\label{LT1}
	L=\bigcup_{f\in F} \{(g,gf): g \in K\}
	\end{equation}
	is an $\varepsilon$-modification of the set $K\times F\subset G\times G$.
\end{lemma}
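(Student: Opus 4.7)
The plan is to identify $L$ and $K\times F$ as unions of $g$-slices indexed by $g\in G$, and then reduce the estimate to controlling $|gF\triangle F|$ for each $g\in K$.

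First, I would observe that $L$ admits the alternative description
\begin{equation*}
L=\{(g,gf): g\in K,\ f\in F\}=\bigcup_{g\in K}\{g\}\times gF,
\end{equation*}
so that for each $g\in K$ the $g$-slice of $L$ is $\{g\}\times gF$, while the $g$-slice of $K\times F$ is $\{g\}\times F$. For $g\notin K$ both slices are empty. Consequently the slices over distinct first coordinates are disjoint, and
\begin{equation*}
|L\triangle(K\times F)|=\sum_{g\in K}|gF\triangle F|.
\end{equation*}

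Next, I would invoke part (b) of fact~\ref{gandK}: since $F$ is $(K,\tfrac{\varepsilon}{2})$-invariant, for every $g\in K$ the set $F$ is $(g,\varepsilon)$-invariant, i.e.\ $|gF\triangle F|<\varepsilon|F|$. Substituting this bound into the displayed identity yields
\begin{equation*}
|L\triangle(K\times F)|<|K|\cdot\varepsilon|F|=\varepsilon|K\times F|,
\end{equation*}
which is exactly the desired modification estimate.

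There is no real obstacle here; the only subtlety is recognizing that the symmetric difference splits cleanly over the first-coordinate fibers, after which fact~\ref{gandK}(b) does all the work.
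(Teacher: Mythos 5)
Your proposal is correct and follows essentially the same route as the paper: rewrite $L$ as $\bigcup_{g\in K}\{g\}\times gF$, split the symmetric difference over the first-coordinate fibers, and apply fact~\ref{gandK}(b) to get $|gF\triangle F|<\varepsilon|F|$ for each $g\in K$. The only cosmetic difference is that you record the fiberwise decomposition as an exact equality where the paper settles for an inequality; both yield the same bound.
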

\begin{proof}
	Observe (see fig.~\ref{figtile}) that
	\begin{equation}\label{LT2}
	L=\bigcup_{g\in K} \{g\}\times gF.
	\end{equation}
	Since $F$ is $(K,\tfrac{\varepsilon}2)$-invariant, by fact~\ref{gandK} b), it is $(g,\varepsilon)$-invariant for all $g\in K$. Hence,
	\begin{equation*}
	|L\triangle (K\times F)|\le\Bigl|\medcup_{g\in K}\bigl((\{g\}\times gF)\triangle (\{g\}\times F)\bigr)\Bigr|\le\sum_{g\in K}|gF\triangle F| <|K||F|\varepsilon,
	\end{equation*}
	and consequently,
	\begin{equation*}
	\frac{|L\triangle (K\times F)|}{|K\times F|}<\varepsilon,
	\end{equation*}
	what was to be shown.
\end{proof}

 Now we will formulate and prove the generalization of theorem~\ref{thm1} to the case of symbolic systems with the action of a countable amenable group $G$. We continue to work in the setup introduced at the beginning of this section. Moreover, to abbreviate the notation, for a fixed $x\in X$ and a neighbourhood $\U$ of the set $\Me[X]$, we will say that a tile $Q=Sc$, where $S\in\mathcal{S}_k$, $k\in\N$, and $c\in G$, is $\U$-\emph{ergodic} if $x|_Q\in\U$. Tiles which are not $\U$-ergodic will be called shortly \emph{nonergodic}.
\begin{theorem}\label{thm1G}
	Let $\U\supset \Me[X]$ be an open set in $\mathcal{B}^*(X)\cup \mathcal{M}(X)$ and let $m\in\N$. Then, for every $\varepsilon>0$, there exists $n\ge m$ such that, for every $x\in X$, there exists a collection $\mathcal{Q}$ of pairwise disjoint, $\U$-ergodic tiles, of shapes belonging to $\bigcup_{k=m}^{n}\mathcal{S}_k$, such that $\bigcup_{Q\in \mathcal{Q}}Q$ has lower Banach density in $G$ at least $1-\varepsilon$.
\end{theorem}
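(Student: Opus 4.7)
The plan is to reduce the theorem to a uniform upper Banach density bound on the set of \emph{bad} group elements, and then to extract $\mathcal Q$ by a top-down hierarchical greedy selection among the tiles of a fixed $\bm{\T}\in\bT$. For $(x,\bm{\T})\in\bm{X}$, call $g\in G$ bad (at level $[m,n]$) if $x|_{T_k^g(\bm{\T})}\notin\U$ for every $k\in[m,n]$; equivalently, $g(x,\bm{\T})\in\bm{D}_{\U,m,n}$, so the bad set equals $E(x,\bm{\T}):=\{g\in G:g(x,\bm{\T})\in\bm{D}_{\U,m,n}\}$ for the diagonal $G$-action on $\bm{X}$. The target estimate is: given $\varepsilon>0$, choose $n\ge m$ so that $\overline d_{\mathsf{Ban}}(E(x,\bm{\T}))<\varepsilon$ uniformly in $(x,\bm{\T})$.

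The principal obstacle is obtaining this bound, which is the diagonal-$G$-action analog of lemma~\ref{EumnG}. I would repeat the Dini-with-continuity-from-above scheme on the compact set $\M_G(\bm{X})$, which requires the diagonal-$G$-invariant analog of lemma~\ref{measure_zero}: $\mu(\bm{D}_{\U,m})=0$ for every $\mu\in\M_G(\bm{X})$. This cannot be read off from lemma~\ref{measure_zero} directly, because diagonal invariance of $\mu$ does not force the disintegration components $\mu_{\bm{\T}}$ to be $\sigma$-invariant (lemma~\ref{disint} crucially used $(G\times G)$-invariance). The workaround I would use is: for ergodic $\mu\in\M_G(\bm{X})$, the marginal $\mu_X=\pi_{X*}\mu$ is automatically $\sigma$-ergodic (any $\sigma$-invariant $A\subset X$ lifts to the diagonal-invariant set $A\times\bT\subset\bm{X}$), hence $\mu_X\in\Me\subset\U$; and in the disintegration $\mu=\int_{\bT}\mu_{\bm{\T}}\,d\mu_{\bT}(\bm{\T})$ one has $\mu_{\bm{\T}}\ll\mu_X$ for $\mu_{\bT}$-a.e.\ $\bm{\T}$. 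Applying the mean ergodic theorem to $(X,\mu_X,\sigma)$ along the F\o lner sequence $(T_k^e(\bm{\T}_0))_{k\in\N}$ (and extracting an a.e.-convergent subsequence from $L^2$-convergence, as in the footnote of lemma~\ref{measure_zero}) yields a $\mu_X$-full-measure set of $x$ along which $d^*(x|_{T_{k_j}^e(\bm{\T}_0)},\mu_X)\to 0$; absolute continuity makes it $\mu_{\bm{\T}_0}$-full too, and intersecting with $D_{\U,m,\bm{\T}_0}$ (which is $\mu_{\bm{\T}_0}$-positive for $\bm{\T}_0$ in a $\mu_{\bT}$-positive set whenever $\mu(\bm{D}_{\U,m})>0$) produces a point $x_0\in D_{\U,m,\bm{\T}_0}$ with $x_0|_{T_{k_j}^e(\bm{\T}_0)}\in\U$ for $j$ large, contradicting the definition of $D_{\U,m,\bm{\T}_0}$. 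With this null-measure statement in hand, Dini's theorem applied to the continuous (by clopenness of $\bm{D}_{\U,m,n}$), nonincreasing family $\mu\mapsto\mu(\bm{D}_{\U,m,n})$ on $\M_G(\bm{X})$ yields uniform convergence to $0$, and lemma~\ref{upper_banach_density_invariantG} for the diagonal $G$-action transfers this into the desired uniform density bound on $E(x,\bm{\T})$.

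Once $n$ is so chosen, fix any $\bm{\T}\in\bT$ and construct $\mathcal Q$ by a top-down greedy sweep: for $k=n,n-1,\dots,m$ in decreasing order and each tile $T\in\T_k$ not already contained in a previously adjoined tile, adjoin $T$ to $\mathcal Q$ iff $x|_T\in\U$. Congruence of $\bT$ guarantees that tiles of consecutive levels are either nested or disjoint, so $\mathcal Q$ consists of pairwise disjoint $\U$-ergodic tiles whose shapes belong to $\bigcup_{k=m}^n\mathcal S_k$. An element $g\in G$ is uncovered by $\bigcup_{Q\in\mathcal Q}Q$ precisely when each tile $T_k^g(\bm{\T})$, $k\in[m,n]$, is nonergodic, that is, precisely when $g\in E(x,\bm{\T})$; combined with the uniform bound above and remark~\ref{densities_comp}, this gives $\underline d_{\mathsf{Ban}}(\bigcup_{Q\in\mathcal Q}Q)\ge 1-\varepsilon$, completing the argument.
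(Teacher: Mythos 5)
Your reduction of the theorem to a uniform upper Banach density bound on the ``bad'' set $E(x,\bm{\T})=\{g:g(x,\bm{\T})\in\bm{D}_{\U,m,n}\}$ for the \emph{diagonal} $G$-action, followed by the top-down greedy selection (which correctly uses congruence/determinism to get disjointness), is a clean plan --- and, if it worked, it would even yield a stronger conclusion than the paper's, namely a collection $\mathcal{Q}$ consisting of tiles of a single $\bm{\T}$. The paper does not do this: it proves the density bound only for the $(G\times G)$-action (lemma~\ref{EumnG}), precisely because the diagonal analogue of lemma~\ref{measure_zero} is problematic, and then transfers the bound to \emph{translated} diagonal slices $\{(g,gh_T):g\in T\}$ inside each tile $T$ of a very coarse auxiliary tiling $\T_{l_0}$, via the F\o lner estimate of lemma~\ref{Foelner_in_group} and a pigeonhole choice of $h_T\in\hat S(T)$; the resulting $\mathcal Q$ mixes tiles of the various translates $h_T(\bm{\T})$.

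The genuine gap is in your workaround for the diagonal version of lemma~\ref{measure_zero}. You claim that in the disintegration $\mu=\int_{\bT}\mu_{\bm{\T}}\,\mathrm{d}\mu_{\bT}(\bm{\T})$ of a diagonally invariant $\mu$ one has $\mu_{\bm{\T}}\ll\mu_X$ for $\mu_{\bT}$-a.e.\ $\bm{\T}$. This is false in general: for a fixed null set $N$ of $\mu_X$ one does get $\mu_{\bm{\T}}(N)=0$ for a.e.\ $\bm{\T}$, but the exceptional set depends on $N$, and genuine absolute continuity fails already for a diagonal self-joining, where the fibers are Dirac measures $\delta_{\bm{\T}}$ and the marginal is non-atomic (nothing in the setup prevents $X$ and $\bT$ from sharing a factor, or even being isomorphic). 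The failure is fatal at exactly the point where you need it: your $\mu_X$-full set of points generic along $(T^e_{k}(\bm{\T}_0))_k$ depends on $\bm{\T}_0$, so the disintegration identity for a single fixed null set does not apply, and without $\mu_{\bm{\T}_0}\ll\mu_X$ you cannot conclude that this set meets $D_{\U,m,\bm{\T}_0}$ in positive $\mu_{\bm{\T}_0}$-measure. A Fubini-type repair would require $\mu=\mu_X\otimes\mu_{\bT}$, which is not given; a direct attack would require a pointwise ergodic theorem for the joint system along the point-dependent (``random'') F\o lner sequence $(T^e_k(\bm{\T}))_k$, which is not available. This is exactly the obstruction the paper's $(G\times G)$/pigeonhole detour is designed to circumvent, so as written your argument does not establish the key density estimate and the proof is incomplete.
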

\begin{proof}
Let $\varepsilon>0$ and $m\in\N$ be fixed. By lemma~\ref{EumnG}, there exists $n\ge m$ such that for all $(x,\bm{\T})\in\bm{X}$ we have
$\overline{d}_{\mathsf{Ban}}(\bm{E}_{\U,m,n,x,\bm{\T}})<\tfrac{\varepsilon}4$, where the upper Banach density is calculated in $G\times G$. We denote $K=\bigcup_{k=m}^{n}\bigcup_{S\in\mathcal{S}_k}S$ and choose $l_0$ such that, for all $l\ge l_0$, each shape $S\in\mathcal{S}_{l}$ is $(KK^{-1},\tfrac{\varepsilon}{2|K|^2})$-invariant. For every $l\ge l_0$ we put $\bm{\mathcal{S}}_l=\{S\times S': S,S'\in\mathcal{S}_l\}$. Note that the union $\bigcup_{l\ge l_0}\bm{\mathcal{S}}_l$, arranged in a sequence, is a F{\o}lner sequence in $G\times G$.
Thus, by theorem~\ref{density_foelner}, enlarging, if necessary, $l_0$, we can assume that for all sets $S\times S'\in \bm{\mathcal{S}}_{l_0}$ the following estimation is true
\begin{equation}\label{est}
\sup_{(g,h)\in G\times G}\frac{|\bm{E}_{\U,m,n,x,\bm{\T}}\cap (Sg\times S'h)|}{|S||S'|}<\frac{\varepsilon}4.
\end{equation}

 We fix a tile $T\in\T_{l_0}$. Let $S\in\mathcal{S}_{l_0}$ be the shape of $T$, and let $c$ be its center. By the equation~\eqref{est}, for every $h\in G$ we have
\begin{equation}\label{est2}
\frac{\varepsilon}4|T||S'|>| \bm{E}_{\U,m,n,x,\bm{\T}}\cap (Sc\times S'h)|=\bigl|\bm{E}_{\U,m,n,x,\bm{\T}}\cap (T\times S'h) \bigr|.
\end{equation}

 We now choose a shape $\hat{S}(T)\in\bigcup_{l\ge l_0+1}\mathcal{S}_l$, which is $(T,\tfrac{\varepsilon}8)$-invariant (we point out that, unless $G$ is abelian, $(S,\tfrac{\varepsilon}8)$-invariance is insufficient in the forthcoming argument). Since $\bT$ is a deterministic system of tilings, $\hat{S}(T)$ is a union of disjoint shapes belonging to $\mathcal{S}_{l_0}$: $\hat{S}(T)=\bigcup_{j=1}^pS_{j}c_j$, where $S_{j}\in\mathcal{S}_{l_0}$ and $c_j$ are some elements of $G$. Thus, from the equation~\eqref{est2} it follows that
\begin{equation*}
\frac{\bigl|\bm{E}_{\U,m,n,x,\bm{\T}}\cap (T\times \hat{S}(T)) \bigr|}{|\hat{S}(T)||T|}=\sum_{j=1}^p\frac{|S_{j}|}{|\hat{S}(T)|}\frac{\bigl|\bm{E}_{\U,m,n,x,\bm{\T}}\cap (T\times S_{j}c_j) \bigr|}{|S_{j}||T|}<\frac{\varepsilon}{4}.
\end{equation*}
	\begin{figure}[h]
	\centering
	\begin{tikzpicture}
	\begin{axis}[
	axis lines = center,
	xlabel = $G$,
	ylabel = $G$,
	xmin=-28,
	xmax=28,
	ymin=-28,
	ymax=28,
	x=0.165cm, y=0.165cm,
	ticks=none,
	every axis x label/.style={
		at={(ticklabel* cs:1)},
		anchor=west,
	}
	]
	\draw (axis cs: -1.5,-6) rectangle (axis cs: 1.5,6);
	\draw (axis cs: 1.5,-14) rectangle (axis cs: 4.5,14);
	\draw (axis cs: 4.5,-20) rectangle (axis cs: 7.5,20);
	\draw (axis cs: 7.5,-30) rectangle (axis cs: 10.5,30);
	\draw (axis cs: -4.5,-14) rectangle (axis cs: -1.5,14);
	\draw (axis cs: -7.5,-20) rectangle (axis cs: -4.5,20);
	\draw (axis cs: -10.5,-30) rectangle (axis cs: -7.5,30);
	\draw [name path=T01] (axis cs: -1.5,-7.5) -- (axis cs: -1.5,4.5)--(axis cs: 1.5,7.5);
	\draw [name path=T02] (axis cs: -1.5,-7.5)--(axis cs: 1.5,-4.5)--(axis cs: 1.5,7.5);
	\fill [pattern=north east lines,
	intersection segments={
		of=T01 and T02,
		sequence={L2--R2}
	}];
	\draw [name path=T11] (axis cs: 1.5,-11.5) -- (axis cs: 1.5,15.5)--(axis cs: 4.5,18.5);
	\draw [name path=T12] (axis cs: 1.5,-11.5)--(axis cs: 4.5,-9.5)--(axis cs: 4.5,18.5);
	\fill [pattern=vertical lines,
	intersection segments={
		of=T11 and T12,
		sequence={L2--R2}
	}];
	\draw [name path=T21] (axis cs: 4.5,-15.5) -- (axis cs: 4.5,24.5)--(axis cs: 7.5,27.5);
	\draw [name path=T22] (axis cs: 4.5,-15.5)--(axis cs: 7.5,-12.5)--(axis cs: 7.5,27.5);
	\fill [pattern=north east lines,
	intersection segments={
		of=T21 and T22,
		sequence={L2--R2}
	}];
	\draw [name path=T31] (axis cs: 7.5,-22.5) -- (axis cs: 7.5,37.5)--(axis cs: 10.5,40.5);
	\draw [name path=T32] (axis cs: 7.5,-22.5)--(axis cs: 10.5,-19.5)--(axis cs: 10.5,40.5);
	\fill [pattern=vertical lines,
	intersection segments={
		of=T31 and T32,
		sequence={L2--R2}
	}];
	\draw [name path=T-11] (axis cs: -1.5,12.5)--(axis cs: -4.5,9.5)--(axis cs: -4.5,-18.5);
	\draw [name path=T-12] (axis cs: -1.5,12.5)--(axis cs: -1.5,-15.5)--(axis cs: -4.5,-18.5);
	\fill [pattern=vertical lines,
	intersection segments={
		of=T-11 and T-12,
		sequence={L2--R2}
	}];
	\draw [name path=T-21] (axis cs: -4.5,15.5)--(axis cs: -7.5,12.5)--(axis cs: -7.5,-27.5);
	\draw [name path=T-22] (axis cs: -4.5,15.5)--(axis cs: -4.5,-24.5)--(axis cs: -7.5,-27.5);
	\fill [pattern=north east lines,
	intersection segments={
		of=T-21 and T-22,
		sequence={L2--R2}
	}];
	\draw [name path=T-31] (axis cs: -7.5,22.5)--(axis cs: -10.5,19.5)--(axis cs: -10.5,-40.5);
	\draw [name path=T-32] (axis cs: -7.5,22.5)--(axis cs: -7.5,-37.5)--(axis cs: -10.5,-40.5);
	\fill [pattern=vertical lines,
	intersection segments={
		of=T-31 and T-32,
		sequence={L2--R2}
	}];
	\addplot [
	domain=-12:12, 
	samples=100, 
	color=black,
	dashed,
	line width=1.2pt,
	]
	{x};
	\draw[very thick] (axis cs: 8,8)--(axis cs: 9.5,6)--(axis cs: 12,6) node[pos=1,anchor=west]{$\{(g,g): g\in G\}$};
	\draw[line width=3pt] (axis cs: 4.5,0) -- (axis cs: 7.5,0) node[pos=0.5, anchor=south, ]{$T$};
	\draw[line width=3pt] (axis cs: 4.5,-20) -- (axis cs: 4.5,20) node[pos=0.07, anchor=south, rotate=90]{$\hat{S}(T)$};
	\draw[very thick] (axis cs: 6,22) -- (axis cs: 12,26)-- (axis cs: 15, 26) node[pos=1, anchor=west]{$L(T)$};
	\draw[very thick] (axis cs: 6,-17) -- (axis cs: 12,-22)-- (axis cs: 15, -22) node[pos=1, anchor=west]{$T\times\hat{S}(T)$};
	\end{axis}
	\end{tikzpicture}
	\caption{The scheme of choosing the sets $\hat{S}(T)$ and construction of the sets $L(T)$ (diagonal hatching corresponds to the equation~\eqref{LT1}, whereas vertical hatching corresponds to the equation~\eqref{LT2})}\label{figtile}
	\label{GtimesG}
\end{figure}
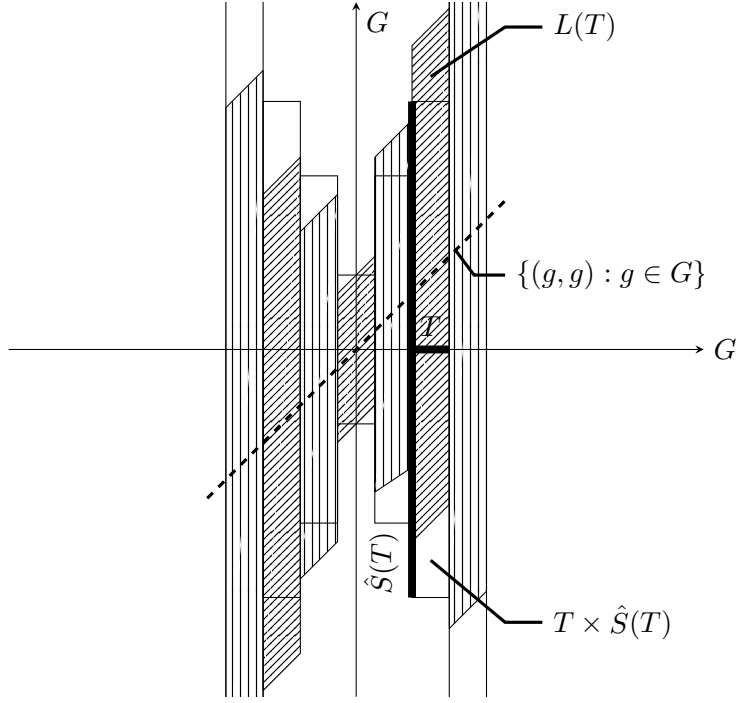

 By $(T,\tfrac{\varepsilon}8)$-invariance of $\hat{S}(T)$ and by lemma~\ref{Foelner_in_group}, the set $$L(T)=\bigcup_{h\in \hat{S}(T)}\{(g,gh):g\in T\}$$ is an $\tfrac{\varepsilon}4$-modification of $T\times \hat{S}(T)$. Moreover, it has the same cardinality as $T\times \hat{S}(T)$. Therefore, the following inequality holds
\begin{equation}\label{est3}
\frac{\bigl|\bm{E}_{\U,m,n,x,\bm{\T}}\cap L(T) \bigr|}{|L_{T}|}\le \frac{|\bm{E}_{\U,m,n,x,\bm{\T}}\cap (T\times \hat{S}(T))|}{|T\times \hat{S}(T)|}+\frac{| L(T)\setminus (T\times \hat{S}(T))|}{|T\times \hat{S}(T)|}<\frac{\varepsilon}2.
\end{equation}
Because $L(T)$ is a disjoint union (over $h\in\hat{S}(T)$) of the sets $\{(g,gh):g\in T\}$, each of cardinality $|T|$, there exists at least one element $h_{T}\in \hat{S}(T)$ such that
\begin{equation*}
\frac{|\bm{E}_{\U,m,n,x,\bm{\T}}\cap\{(g,gh_{T}): g\in T\}|}{|T|}<\frac{\varepsilon}2.
\end{equation*}
Observe that $$\bm{E}_{\U,m,n,x,\bm{\T}}\cap\{(g,gh_T): g\in T\}=\bm{E}_{\U,m,n,x,h_{T}(\bm{\T})}\cap \{(g,g): g\in T\}.$$
Denoting the set $\{g\in G: (g,g)\in \bm{E}_{\U,m,n,x,h_{T}(\bm{\T})}\}$ by $E_{\U,m,n,x,h_{T}(\bm{\T})}$, we also have
$$|\bm{E}_{\U,m,n,x,h_{T}(\bm{\T})}\cap \{(g,g): g\in T\}|=|E_{\U,m,n,x,h_{T}(\bm{\T})}\cap T|.$$
Henceforth, we have shown the inequality
\begin{equation}\label{ineq}
\frac{|E_{\U,m,n,x,h_T(\bm{\T})}\cap T|}{|T|}<\frac{\varepsilon}2.
\end{equation}

 Because every shape $S\in\mathcal{S}_{l_0}$, as well as every tile $T\in\T_{l_0}$, is $(KK^{-1},\tfrac{\varepsilon}{2|K|^2})$-invariant, by lemma~\ref{lemma-core}, for every $T\in\T_{l_0}$ we have
\begin{equation*}
\frac{|T\setminus T_{KK^{-1}}|}{|T|}< \frac{\varepsilon}2.
\end{equation*}
The core $T_{KK^{-1}}$ has the property, that for every shape $S\in\bigcup_{k=m}^n\mathcal{S}_k$ and $c\in G$ the following implication holds 
\begin{equation}\label{implication_core}
Sc\cap T_{KK^{-1}}\neq \varnothing \Rightarrow Sc\subset T.
\end{equation} 

 Within $T$ we select a family $\mathcal{Q}(T)$ of $\U$-ergodic tiles, as follows. By the definition of the set $E_{\U,m,n,x,h_{T}(\bm{\T})}$, for every $g\notin E_{\U,m,n,x,h_{T}(\bm{\T})}$, there exists $k\in[m,n]$ for which the tile $T_{k}^g= T_k^g\bigl(h_{T}(\T_k)\bigr) $ satisfies $x|_{T_k^g}\in \U$, i.e.\ $T_k^g$ is $\U$-ergodic. For every $g\in T_{KK^{-1}}\!\!\setminus\!E_{\U,m,n,x,h_{T}(\bm{\T})}$ let $k(g)$ denote the largest such $k\in[m,n]$. Since $\bm{\T}$ belongs to a deterministic system of tilings, for $g\neq g'\in T_{KK^{-1}}\!\!\setminus\! E_{\U,m,n,x,h_{T}(\bm{\T})}$, the tiles $T_{k(g)}^g,\ T_{k(g')}^{g'}$ are either disjoint, or one of them is included in the other. However, the way the tiles $T_{k(g)}^g$ and $T_{k(g')}^{g'}$ were chosen excludes the possibility of strict inclusion. Thence, every two of the chosen tiles are either disjoint or equal. Note also that, by~\eqref{implication_core}, all the tiles $T_{k(g)}^g$, $g\in T_{KK^{-1}}\!\setminus E_{\U,m,n,x,h_{T}(\bm{\T})}$, are contained in $T$. We denote the collection of tiles $T_{k(g)}^g$, constructed this way, by $\mathcal{Q}(T)$.  We repeat the above construction for all $T\in\T_{l_0}$. Then we put $\mathcal{Q}=\bigcup_{T\in\T_{l_0}}\mathcal{Q}(T)$. All the tiles $Q\in\mathcal{Q}$ are $\U$-ergodic. It is worth to mention, that by the equation~\eqref{implication_core}, for every $T\in \T_{l_0}$, the following inclusion holds
\begin{equation*}
T_{KK^{-1}}\!\!\setminus\! E_{\U,m,n,x,h_{T}(\bm{\T})} \subset\bigcup_{Q\in\mathcal{Q}(T)}Q\subset T.
\end{equation*}
On account of that, we have
\begin{multline}\label{nie_wiem}
\frac{\bigl|T\cap \medcup_{Q\in\mathcal{Q}}Q\bigr|}{|T|}=\frac{\bigl| \medcup_{Q\in\mathcal{Q}(T)}Q\bigr|}{|T|}\ge \frac{|T_{KK^{-1}}\!\setminus\! E_{\U,m,n,x,h_{T}(\bm{\T})}|}{|T|}\\\ge \frac{|T\setminus\! E_{\U,m,n,x,h_{T}(\bm{\T})}|}{|T|}-\frac{|T\setminus\! T_{KK^{-1}}|}{|T|}>1-\frac{\varepsilon}{2}-\frac{\varepsilon}{2}=1-\varepsilon.
\end{multline}
From which, by lemma~\ref{density_tiling}, it follows that
\begin{equation*}
\underline{d}_{\mathsf{Ban}}\Bigl(\bigcup_{Q\in\mathcal{Q}}Q\Bigr)\ge 1-\varepsilon.
\end{equation*}
\end{proof}
\begin{remark}
	In case $\U$ contains a ball $\mathsf{Ball}(\M_{\sigma}^{\mathsf{erg}}(X),\rho)$, $\rho>0$, it is possible to find a tiling $\mathcal{Q}'$ of $G$, consisting exclusively of $\U$-ergodic tiles. The construction of the tiling $\mathcal{Q}'$ relies on modifying the collection $\mathcal{Q}$ obtained in theorem~\ref{thm1G} for $\tfrac{\varepsilon}2<\tfrac{\rho}2$ and a neighbourhood $\V=\mathsf{Ball}(\Me[X],\tfrac{\rho}2)$ (in place of $\U$), by appropriately distributing the elements of the complement of $\bigcup_{Q\in \mathcal{Q}}Q$ amongst the tiles ${Q\in\mathcal{Q}}$. The construction follows the same path (based on a variant of Hall's marriage theorem) as the proof of \cite[theorem 4.3]{DHZ}. As a result, the shapes of the tiles $Q'\in \mathcal{Q}'$ are $\tfrac{\varepsilon}2$-modifications of the shapes belonging to $\bigcup_{k=m}^n\mathcal{S}_k$. However, one has to bear in mind, that in case $G=\Z$, the tiles of $Q'$ will typically not be intervals (they will have the form of a union of an interval and a small amount of isolated points). As our examples~\ref{nontoepliz} and~\ref{toeplitz} show, in some cases, a tiling $\mathcal{Q}'$ whose all tiles are $\U$-ergodic intervals does not exist.
\end{remark}

 We end this section with a formulation and proof of a generalization of theorem~\ref{thm2} to the case of $G$-subshifts. In the proof of theorem~\ref{thm2G} we use the following straightforward generalization of lemma~\ref{konkatenacja} to the case of $G$-subshifts.
\begin{lemma}\label{konkatenacjaG}
	Let $G$ be a countable amenable group and let $X$ be a symbolic system with the action of $G$. For every $\varepsilon>0$ and any collection of finite blocks $B_1,...,B_m\in \mathcal{B}^*(X)$ with pairwise disjoint domains $F_1,...,F_m\subset G$, such that for every $j=1,...,m$, there exists a measure $\mu_j\in\mathcal{M}(X)$ satisfying $d^*(B_j,\mu_j)<\varepsilon$, the following inequality holds
	\begin{equation*}
	d^*\left(B,\mu\right)<2\varepsilon,
	\end{equation*}
	where $B$ is the concatenation of the blocks $B_1,..., B_m$, that is, the block with the domain $F=\bigcup_{j=1}^mF_j$, such that $B|_{F_j}= B_j$ for $j=1,...,m$, and $\mu=\sum_{j=1}^{m}\frac{|F_j|}{|F|}\mu_j$.
\end{lemma}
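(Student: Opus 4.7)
The plan is to insert, via the triangle inequality, an intermediate ``object''—the formal weighted empirical combination $\nu$ defined by $\nu(C)=\sum_{j=1}^m\frac{|F_j|}{|F|}\mathrm{Fr}_{B_j}(C)$—between $B$ and $\mu$, and to bound each of $d^*(B,\nu)$ and $d^*(\nu,\mu)$ by $\varepsilon$ separately. The second bound is essentially immediate: at every block $C$ the value $\nu(C)-\mu([C])$ equals $\sum_j\frac{|F_j|}{|F|}\bigl(\mathrm{Fr}_{B_j}(C)-\mu_j([C])\bigr)$, so bringing the absolute value inside and swapping the order of summation collapses $d^*(\nu,\mu)$ to $\sum_j\frac{|F_j|}{|F|}d^*(B_j,\mu_j)$, which is strictly less than $\varepsilon$ by hypothesis.

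For $d^*(B,\nu)$ I would compute, at each fixed $K=K_l$, the total $L^1$ discrepancy between the frequency vector of $B$ and that of the weighted combination. Since the $F_j$ are pairwise disjoint and $B|_{F_j}=B_j$, every $g\in(F_j)_K$ contributes identically to $|F|\mathrm{Fr}_B(C)$ and to $|F_j|\mathrm{Fr}_{B_j}(C)$; the only positions counted on the $B$-side but not on the combo-side are $g\in F_K\setminus\bigsqcup_j(F_j)_K$. This imbalance is nonnegative at every $C$, so the sum over $C$ of the absolute differences telescopes to $\frac{|F_K|-\sum_j|(F_j)_K|}{|F|}$, which in turn is at most $\frac{\sum_j|F_j\setminus(F_j)_K|}{|F|}$ by the elementary bound $|F_K|\le|F|=\sum_j|F_j|$.

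The only step requiring a nontrivial observation is to notice that the ``boundary deficit'' $\frac{|F_j\setminus(F_j)_K|}{|F_j|}$ is already implicitly controlled by the hypothesis $d^*(B_j,\mu_j)<\varepsilon$. Indeed, since $\mu_j$ is a probability measure while $\mathrm{Fr}_{B_j}$ is only a sub-probability with total mass $\frac{|(F_j)_K|}{|F_j|}$ at each level $K=K_l$, we have $\sum_C\bigl|\mathrm{Fr}_{B_j}(C)-\mu_j([C])\bigr|\ge\sum_C\bigl(\mu_j([C])-\mathrm{Fr}_{B_j}(C)\bigr)=\frac{|F_j\setminus(F_j)_K|}{|F_j|}$; weighting by $\frac{2^{-l}}{|K_l|+1}$ and summing over $l$ yields $\sum_l\frac{2^{-l}}{|K_l|+1}\cdot\frac{|F_j\setminus(F_j)_{K_l}|}{|F_j|}\le d^*(B_j,\mu_j)$. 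Averaging this inequality with weights $\frac{|F_j|}{|F|}$ across $j$ then gives $d^*(B,\nu)<\varepsilon$, and combining with $d^*(\nu,\mu)<\varepsilon$ concludes via the triangle inequality that $d^*(B,\mu)<2\varepsilon$. There is no serious obstacle here beyond careful bookkeeping with $K$-cores; the clean appearance of the factor $2$ is exactly accounted for by the two pieces in the triangle decomposition.
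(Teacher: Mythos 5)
Your proof is correct. The paper itself omits the proof of this lemma (it is presented as a ``straightforward generalization'' of Lemma~\ref{konkatenacja}, which in turn is cited from elsewhere), so there is no in-paper argument to compare against; your write-up supplies exactly the missing details. The two easy pieces check out: the bound $d^*(\nu,\mu)\le\sum_j\frac{|F_j|}{|F|}d^*(B_j,\mu_j)<\varepsilon$ is immediate, and for $d^*(B,\nu)$ the pointwise nonnegativity of $|F|\bigl(\mathrm{Fr}_B(C)-\nu(C)\bigr)=|\{g\in F_K\setminus\bigsqcup_j(F_j)_K: B|_{Kg}\approx C\}|$ is right, since $(F_j)_K\subset F_K$, the cores are disjoint, and $B|_{Kg}=B_j|_{Kg}$ whenever $Kg\subset F_j$. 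The one genuinely delicate point --- that the core deficit $\frac{|F_j\setminus(F_j)_K|}{|F_j|}$ is already paid for by the hypothesis, because $\mathrm{Fr}_{B_j}(\cdot)$ at level $K$ has total mass $\frac{|(F_j)_K|}{|F_j|}$ (the frequency is normalized by $|F_j|$, not $|(F_j)_K|$) while $\mu_j$ has total mass $1$ on the partition into cylinders --- is handled correctly and is exactly what makes the constant $2$ come out cleanly rather than requiring a separate F\o lner-type assumption on the $F_j$.
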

\begin{theorem}\label{thm2G}
		Let $(X,\sigma)$ be a symbolic system with the action of a countable amenable group $G$, such that $\mathcal{M}_{\sigma}(X)$ is a Bauer simplex. Let $\bT$ be a F{\o}lner, deterministic system of dynamical tilings of $G$. Let $\U\supset \Me[X]$ be an open subset of $\mathcal{B}^*(X)\cup\mathcal{M}(X)$ and fix an $\varepsilon>0$. Then, there exists $j_0\in\mathbb{N}$ such that for every $j\ge j_0$ and every pair $(x,\bm{\T})\in \bm{X}=X\times \bT$, the union $M^{\mathsf{NE}}(x,\T_j)$ of the nonergodic tiles of $\T_{j}$ has upper Banach density in $G$ smaller than~$\varepsilon$.
\end{theorem}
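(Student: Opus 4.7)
The plan is to mimic the proof of theorem~\ref{thm2}, replacing the interval-based decomposition into $C_j$'s by the tile decomposition given by $\T_j$ and exploiting congruence of $\bT$ to remove the boundary losses that appeared in the $\Z$-argument. Without loss of generality assume $\U=\mathsf{Ball}(\Me[X],\varepsilon)$. Since $\M_\sigma(X)$ is a Bauer simplex, lemma~\ref{barycenter_compact} provides $\gamma\in(0,\varepsilon/2)$ such that for every $\mu_0\in\Me[X]$ and every barycenter $\mu=\int\nu\,\mathrm{d}\xi(\nu)$ of a Borel probability measure $\xi$ on $\M_\sigma(X)$, the inequality $d^*(\mu,\mu_0)<\gamma$ implies $\xi\bigl(\M_\sigma(X)\setminus\mathsf{Ball}(\mu_0,\varepsilon/2)\bigr)<\varepsilon/2$. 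By theorem~\ref{blocks_close_to_measuresG}, choose $j_0$ so large that for every $j\ge j_0$, every shape $S\in\mathcal{S}_j$, and every block $B\in\Lambda^S$ occurring in $X$, one has $d^*(B,\M_\sigma(X))<\gamma/4$.

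Fix $j\ge j_0$ and $(x,\bm{\T})\in\bm{X}$. I would apply theorem~\ref{thm1G} at starting level $m=j+1$ with the smaller neighbourhood $\U'=\mathsf{Ball}(\Me[X],\gamma/4)\subset\U$ and tolerance $\varepsilon/4$, obtaining some $n\ge j+1$ and a collection $\mathcal{Q}$ of pairwise disjoint $\U'$-ergodic tiles with shapes in $\bigcup_{k=j+1}^{n}\mathcal{S}_k$ such that $\underline{d}_{\mathsf{Ban}}\bigl(\bigcup\mathcal{Q}\bigr)\ge 1-\varepsilon/4$, and hence $\overline{d}_{\mathsf{Ban}}\bigl(G\setminus\bigcup\mathcal{Q}\bigr)\le\varepsilon/4$ by remark~\ref{densities_comp}. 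Here determinism of $\bT$ is crucial: since each $Q\in\mathcal{Q}$ lies at a level strictly above $j$, it is a union of tiles of $\T_j$; consequently every tile $T\in\T_j$ is either contained in some $Q\in\mathcal{Q}$ or lies entirely in $G\setminus\bigcup\mathcal{Q}$, and there is no ``partial overlap'' analogous to the sets $\tilde{B}_i$ from the proof of theorem~\ref{thm2}.

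Next, I would run the barycenter argument tile by tile. For each $Q\in\mathcal{Q}$ pick $\mu_Q\in\Me[X]$ with $d^*(x|_Q,\mu_Q)<\gamma/4$; for each $T\in\T_j$ with $T\subset Q$ pick $\nu_T\in\M_\sigma(X)$ with $d^*(x|_T,\nu_T)<\gamma/4$ (available by the choice of $j_0$). Since $x|_Q$ is the concatenation of the blocks $x|_T$ with $T\subset Q$, lemma~\ref{konkatenacjaG} gives $d^*\bigl(x|_Q,\sum_{T\subset Q}\tfrac{|T|}{|Q|}\nu_T\bigr)<\gamma/2$, hence by the triangle inequality $d^*\bigl(\mu_Q,\sum_{T\subset Q}\tfrac{|T|}{|Q|}\nu_T\bigr)<3\gamma/4<\gamma$. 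Applying lemma~\ref{barycenter_compact} to the barycenter represented by $\xi_Q=\sum_{T\subset Q}\tfrac{|T|}{|Q|}\delta_{\nu_T}$ then yields $\sum_{T\subset Q,\ \nu_T\notin\mathsf{Ball}(\mu_Q,\varepsilon/2)}\tfrac{|T|}{|Q|}<\varepsilon/2$. Any nonergodic $T\subset Q$ satisfies $d^*(x|_T,\mu_Q)\ge\varepsilon$, so $d^*(\nu_T,\mu_Q)>\varepsilon-\gamma/4>\varepsilon/2$; this shows that the $|\cdot|$-fraction of nonergodic tiles of $\T_j$ inside each $Q$ is less than $\varepsilon/2$.

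Denote by $Q^{\mathsf{NE}}$ the union of nonergodic tiles of $\T_j$ contained in $Q$, so $|Q^{\mathsf{NE}}|<(\varepsilon/2)|Q|$, and observe that $M^{\mathsf{NE}}(x,\T_j)\subset\bigl(G\setminus\bigcup\mathcal{Q}\bigr)\cup\bigcup_{Q\in\mathcal{Q}}Q^{\mathsf{NE}}$. The main mildly technical obstacle is that a per-tile fractional bound does not translate directly into an upper Banach density bound; however, since the shapes in $\bigcup_{k=j+1}^{n}\mathcal{S}_k$ form a finite family of uniformly bounded cardinality, for a sufficiently F{\o}lner test set $F\subset G$ and every $g\in G$, the tiles of $\mathcal{Q}$ straddling the boundary of $Fg$ cover only a vanishing fraction of $|F|$. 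Passing to the supremum over $g\in G$ and then to the infimum over such $F$ (exactly as in the passage from~\eqref{oszacowanko} to~\eqref{dbC} in the proof of theorem~\ref{thm2}) gives $\overline{d}_{\mathsf{Ban}}\bigl(\bigcup_{Q\in\mathcal{Q}}Q^{\mathsf{NE}}\bigr)\le\varepsilon/2$; combining this with $\overline{d}_{\mathsf{Ban}}\bigl(G\setminus\bigcup\mathcal{Q}\bigr)\le\varepsilon/4$ by subadditivity (remark~\ref{densities_comp}) finally yields $\overline{d}_{\mathsf{Ban}}(M^{\mathsf{NE}}(x,\T_j))<\varepsilon$, as required.
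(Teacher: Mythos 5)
Your overall architecture matches the paper's (reduce to $\U=\mathsf{Ball}(\Me[X],\varepsilon)$, invoke lemma~\ref{barycenter_compact}, cover $G$ up to small density by ``auxiliary ergodic'' tiles from theorem~\ref{thm1G}, run the barycenter argument inside each such tile), but there is a genuine gap at the step where you invoke determinism/congruence to claim that each $Q\in\mathcal{Q}$ is a union of tiles of $\T_j$, so that every $T\in\T_j$ is either contained in some $Q$ or disjoint from $\bigcup\mathcal{Q}$. This is not justified by theorem~\ref{thm1G}: that theorem only asserts that the tiles $Q$ have \emph{shapes} belonging to $\bigcup_{k=m}^{n}\mathcal{S}_k$; it does not assert that they are tiles of the tilings $\T_k$ of the given $\bm{\T}$. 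Indeed, in its proof the tiles of $\mathcal{Q}$ are of the form $T^g_{k(g)}\bigl(h_T(\bm{\T})\bigr)$, i.e.\ they are tiles of \emph{shifted} systems $h_T(\bm{\T})$ where the shift $h_T$ varies from one $T\in\T_{l_0}$ to another (finding a good shift $h_T$ is the whole point of the $G\times G$ argument). Congruence of $\bT$ relates $\T_{k+1}$ to $\T_k$ within one and the same $\bm{\T}$, so it gives no alignment between $\mathcal{Q}$ and the fixed tiling $\T_j$. Consequently a tile $T\in\T_j$ can straddle the boundary of a $Q\in\mathcal{Q}$, your identification of $x|_Q$ with the concatenation of the blocks $x|_T$, $T\subset Q$, fails, and lemma~\ref{konkatenacjaG} cannot be applied as you state.

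The paper repairs exactly this: it sets $K=\bigcup_{S\in\mathcal{S}_j}S$, chooses the starting level $m$ so that all shapes at levels $\ge m$ are $(KK^{-1},\tfrac{\gamma}{4})$-invariant, lets $\T(Q)$ be the tiles of $\T_j$ meeting the core $Q_{KK^{-1}}$ (these \emph{are} contained in $Q$), and puts $Q'=\bigcup_{T\in\T(Q)}T$. Then $Q'$ is a $\tfrac{\gamma}{4}$-modification of $Q$ by lemma~\ref{lemma-core}, and lemma~\ref{close_mod} bounds $d^*(x|_{Q'},x|_Q)$, after which your barycenter argument goes through with $Q'$ in place of $Q$ (at the cost of one extra $\tfrac{\gamma}{4}$ in the triangle inequality and an extra $\tfrac{\gamma}{4}$-term in the final density count). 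Your choice $m=j+1$ does not substitute for this F\o lner-invariance requirement relative to $KK^{-1}$. The remaining parts of your argument (the choice of $\gamma$ and $j_0$, the estimate $d^*(\nu_T,\mu_Q)>\varepsilon-\gamma/4>\varepsilon/2$ for nonergodic $T$, and the final density bookkeeping via a boundary/F\o lner argument, which the paper instead does via lemma~\ref{density_tiling} applied to the tiling $\mathcal{P}=\T_{l_0}$ underlying the construction of $\mathcal{Q}$) are sound once the alignment issue is fixed.
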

\begin{proof}
	
	 Since $\Me[X]$ is a compact set, without loss of generality, we can assume that $\U=\mathsf{Ball}(\Me[X],\varepsilon)$. By lemma~\ref{barycenter_compact}, there exists $0<\gamma<\tfrac{4\varepsilon}{3}$ such that for every $\mu_0\in\M_{\sigma}^{\mathsf{erg}}(X)$ and every $\mu=\int_{\mathcal{M}_{\sigma}(X)}\nu\mathrm{d}\xi(\nu)$, the following implication holds
	\begin{equation*}
	d^*(\mu,\mu_0)<\gamma \Rightarrow \xi\bigl(\mathcal{M}_{\sigma}(X)\setminus \mathsf{Ball}(\mu_0,\tfrac{\varepsilon}3)\bigr)<\tfrac{\varepsilon}3.
	\end{equation*}
	
	 Let $j_0$ be such that for all blocks $B\in\mathcal{B}^*(X)$ with domains being shapes $S\in\bigcup_{j\ge j_0}\mathcal{S}_j$ we have $d^*(B,\mathcal{M}_{\sigma}(X))<\tfrac{\gamma}4$. Existence of such $j_0$ follows from theorem~\ref{blocks_close_to_measuresG}. We fix a pair $(x,\bm{\T})\in\bm{X}$, $j\ge j_0$ and we let $\T=\T_{j}$. We put $K=\bigcup_{S\in\mathcal{S}_{j}}S$. Let $m\in\N$ be such that for all $k\ge m$, every $S\in\mathcal{S}_k$ is $(KK^{-1},\tfrac{\gamma}{4})$-invariant. We put $\V=\mathsf{Ball}(\Me[X],\tfrac{\gamma}4)$. By theorem~\ref{thm1}, there exist $n\ge m$ and a collection $\mathcal{Q}$ of pairwise disjoint, $\V$-ergodic tiles with shapes belonging to $\bigcup_{k=m}^{n}\mathcal{S}_k$, such that the union of tiles $\bigcup_{Q\in \mathcal{Q}}Q$ has lower Banach density at least~$1-\tfrac{\varepsilon}3$. In what follows, tiles $Q\in \mathcal{Q}$ will be called ``auxiliary ergodic''. For every ``auxiliary ergodic'' tile $Q\in\mathcal{Q}$, there exists a measure $\mu_{Q}\in\M_{\sigma}^{\mathsf{erg}}(X)$ such that $d^*(x|_{Q},\mu_{Q})<\tfrac{\gamma}4$.
	
	 We fix an ``auxiliary ergodic'' tile $Q\in\mathcal{Q}$. Since every shape $S\in\bigcup_{k=m}^{n}\mathcal{S}_k$ is $(KK^{-1},\tfrac{\gamma}4)$-invariant, every $Q\in\mathcal{Q}$ is $(KK^{-1},\tfrac{\gamma}4)$-invariant too. Hence, by lemma~\ref{lemma-core}, we have
	\begin{equation*}
	\frac{|Q\setminus Q_{KK^{-1}}|}{|Q|}<\frac{\gamma}{4}.
	\end{equation*}
	Let $\T(Q)$ denote the collection of those tiles $T\in\T$ which are not disjoint with the core $Q_{KK^{-1}}$ and let $Q'=(Q_{KK^{-1}})^{\T}=\bigcup_{T\in \T(Q)}T$. Observe that $Q_{KK^{-1}}\subset Q'\subset Q$.
	Thus, the following inequality holds
	\begin{equation}\label{12}
	\frac{|Q\setminus Q'|}{|Q|}\le \frac{\bigl|Q\setminus Q_{KK^{-1}}\bigr|}{|Q|}<\frac{\gamma}{4}.
	\end{equation}
	Henceforth $Q'$ is $\tfrac{\gamma}4$-modification of $Q$. So, by lemma~\ref{close_mod}, it is true that
	\begin{equation*}
	d^*(x|_{Q'},\mu_Q)\le d^*(x|_{Q'},x|_{Q})+d^*(x|_{Q},\mu_Q)<\frac{\gamma}{4}+\frac{\gamma}{4}=\frac{\gamma}{2}.
	\end{equation*}
	
	 Recall that for every $T\in\T$ we have $d^*(x|_T,\mathcal{M}_{\sigma}(X))<\tfrac{\gamma}4$. So, for every $T\in\T(Q)$, there exists an invariant measure $\nu_T\in \mathcal{M}_{\sigma}(X)$ such that $d^*(x|_T,\nu_T)<\tfrac{\gamma}4$. From lemma~\ref{konkatenacjaG} it follows that
	\begin{equation*}
	d^*\Bigl(x|_{Q'},\sum_{T\in\T(Q)}\frac{|T|}{|Q'|}\nu_T\Bigr)<\frac{\gamma}2.
	\end{equation*}
	Using the triangle inequality we obtain
	\begin{equation*}
	d^*\Bigl(\mu_{Q},\sum_{T\in\T(Q)}\frac{|T|}{|Q'|}\nu_T\Bigr)\le d^*(\mu_{Q},x|_{Q'})+d^*\Bigl(x|_{Q'},\sum_{T\in\T(Q)}\frac{|T|}{|Q'|}\nu_T\Bigr)<\gamma.
	\end{equation*}
	Clearly, $\sum_{T\in\T(Q)}\frac{|T|}{|Q'|}\nu_T=\int_{\mathcal{M}_{\sigma}(X)}\nu\mathrm{d}\xi(\nu)$ for the measure $\xi=~\sum_{T\in\T(Q)}\frac{|T|}{|Q'|}\delta_{\nu_T}$, where $\delta_{\nu_T}$ is the Dirac measure on the Bauer simplex $\mathcal{M}_{\sigma}(X)$, supported at $\nu_T$. By theorem~\ref{barycenter_compact}, the sum of coefficients $\frac{|T|}{|Q'|}$ corresponding to $T$ belonging to the set
	$$	\{T\in\mathcal{T}(Q): \nu_T\in \mathcal{M}_{\sigma}(X)\setminus \mathsf{Ball}(\mu_{Q},\tfrac{\varepsilon}3)\}$$
	is smaller than $\tfrac{\varepsilon}3$.
	
	 We denote by $\mathcal{T}^{\mathsf{NE}}(Q)$ the set $\{T\in\mathcal{T}(Q): d^*(x|_T,\M_{\sigma}^{\mathsf{erg}}(X))\ge \varepsilon \}$ (i.e.\ the collection of nonergodic tiles $T$ included in the ``auxiliary ergodic'' tile $Q$).
	Observe that for $T\in \T^{\mathsf{NE}}(Q)$, by the triangle inequality, we have
	\begin{equation*}
	d^*(\nu_T,\mu_{Q})\ge d^*(x|_{T},\mu_{Q})-d^*(x|_{T},\nu_T)\ge \varepsilon-\frac{\gamma}4>\frac{\varepsilon}3.
	\end{equation*} 
	Thence, the following inclusion holds
	\begin{equation*}
	\mathcal{T}^{\mathsf{NE}}(Q)\subset \{T\in\mathcal{T}(Q): \nu_T\in \mathcal{M}_{\sigma}(X)\setminus \mathsf{Ball}(\mu_{Q},\tfrac{\varepsilon}3)\}.
	\end{equation*}
	Therefore $\sum_{T\in\T^{\mathsf{NE}}(Q)}\frac{|T|}{|Q'|}<\tfrac{\varepsilon}3$. In other words, the fraction of nonergodic tiles $T$ in the fixed ``auxiliary ergodic'' tile $Q$ is smaller than~$\tfrac{\varepsilon}3$.
	
	 Let $M^{\mathsf{NE}}(x,\T)$ denote the union of all nonergodic tiles $T\in\T$ and $M^{\mathsf{NE}}(Q)=\ \bigcup_{T\in \T^{\mathsf{NE}}(Q)}T$ denote the union of all nonergodic tiles $T$ included in the fixed ``auxiliary ergodic'' tile $Q$. Then we have
	\begin{equation}\label{epsilon3}
	\frac{|M^{\mathsf{NE}}(Q)|}{|Q|}=\frac{|Q'|}{|Q|}\sum_{T\in\T^{\mathsf{NE}}(Q)}\frac{|T|}{|Q'|}<\frac{\varepsilon}{3}.
	\end{equation}
	
	 Recall that our goal is to show that $\overline{d}_{\mathsf{Ban}}(M^{\mathsf{NE}}(x,\T))<\varepsilon$. In the construction of the collection $\mathcal{Q}$ (see the proof of theorem~\ref{thm1G}) we have used the tiling $\T_{l}$, from now on denoted by $\mathcal{P}$, with the property that every tile $P\in\mathcal{P}$ satisfies
	\begin{equation}\label{epsilon6}
	\frac{|P\cap\medcup_{Q\in\mathcal{Q}}Q|}{|P|}>1-\frac{\varepsilon}3
	\end{equation}
	(see the equation~\eqref{nie_wiem} -- we remind that the collection of tiles $\mathcal{Q}$ occurring in this proof is constructed using theorem~\ref{thm1G} with $\tfrac{\varepsilon}3$ in place of $\varepsilon$).
	On the account of lemma~\ref{density_tiling} and remark~\ref{densities_comp}, it suffices to show that for every $P\in\mathcal{P}$ the following inequality holds
	\begin{equation*}
	\frac{|P\cap M^{\mathsf{NE}}(x,\T)|}{|P|}<\varepsilon.
	\end{equation*}
	By the equations~\eqref{epsilon6},~\eqref{12} and~\eqref{epsilon3} we obtain
	\begin{multline*}
	\frac{|P\cap M^{\mathsf{NE}}(x,\T)|}{|P|}\le \frac{|P\setminus\medcup_{Q\in\mathcal{Q}}Q|}{|P|}+\sum_{Q\subset P}\frac{|Q|}{|P|}\frac{|Q\setminus Q'|}{|Q|}+\sum_{Q\subset P}\frac{|Q|}{|P|}\frac{|M^{\mathsf{NE}}(Q)|}{|Q|}\\<\frac{\varepsilon}{3}+\frac{\gamma}{4}+\frac{\varepsilon}{3}<\varepsilon,
	\end{multline*}
	which completes the proof.
\end{proof}
\section{Final remarks}
Firsty, we would like to remark that the results presented in the previous section can be directly transferred to the case of countable amenable cancellative semigroups, since every such semigroup can be naturally embedded in a countable amenable group in such a way that a fixed F{\o}lner sequence in the semigroup becomes a F{\o}lner sequence in the group.

Secondly, we point out that the main theorems of this paper are valid (after an appropriate reformulation, see below) not only for subshifts but also for all classical topological dynamical systems $(X,T)$ (with an action of $\Z$ or $\N_0$ on a compact metric space $X$) and for general topological dynamical systems $(X,\tau)$ (with actions of a countable amenable group $G$ on a compact metric space $X$). Instead of blocks occurring in $x\in X$, say $B=x|_{[i,i+k)}$, one has to consider ``pieces of orbits'' of the form $\{T^j(x): j\in[i,i+k)\}$ (resp.\ $\{g(x): g\in K\}$ instead of $B=x|_K$ for $K\subset G$). Then, instead of the empirical measure associated with $B$ one has to consider simply the probability measure $\frac1k\sum_{j=0}^{k-1}\delta_{T^{i+j}(x)}$ (resp. $\frac1{|K|}\sum_{g\in K}\delta_{g(x)}$). Most of the proofs actually simplify, for example, it suffices to consider the metric $d^*$ on $\M(X)$ without needing to extend it to $\mathcal B^*(X)$, also, lemma~\ref{konkatenacja} (resp.~\ref{konkatenacjaG}), is not needed. However, the simplification causes that there is no direct way of deducing theorems for symbolic systems from their general analogs; for instance, there are subtle differences between the metric $d^*$ on $\M(X)$ and the extended pseudometric on $\mathcal B^*(X)\cup\M(X)$. This is one of the reasons why we have chosen to write all the proofs for symbolic systems rather than the easier proofs for general topological systems. For completeness, let us formulate the main theorems in the general setup of countable amenable group actions:

\begin{theorem}
Let $\tau$ be an action of a countable amenable group $G$ on a compact metric space $X$. Let $\U$ be an open set in $\M(X)$ containing all ergodic measures of the action $\tau$. Let $\bT=\bigvee_{k\in\N}\mathsf T_k$ be a F\o lner, deterministic system of tilings of $G$ and let $\mathcal S_k$ denote the collection of shapes of $\mathsf T_k$, $k\in\N$. Then, for every $\varepsilon>0$ there exists $n\ge m$ such that for every $x\in X$ there is a collection $\mathcal Q$ of pairwise disjoint tiles with shapes belonging to $\bigcup_{k=m}^n\mathcal S_k$, whose union has lower Banach density at least $1-\varepsilon$ and for every $Q\in\mathcal Q$ we have $\frac1{|Q|}\sum_{g\in Q}\delta_{g(x)}\in\U$.
\end{theorem}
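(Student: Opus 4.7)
The plan is to parallel the proof of Theorem \ref{thm1G} almost line by line, everywhere replacing the symbolic block $x|_K$ by the empirical measure $\mathrm{emp}(x,K):=\tfrac{1}{|K|}\sum_{g\in K}\delta_{g(x)}\in\M(X)$ and the condition ``$x|_K\in\U$'' by ``$\mathrm{emp}(x,K)\in\U$''. I would work on $\bm X = X\times\bT$ equipped with both the $G\times G$-action and the diagonal $G$-action, and prove, in order, the direct analogs of Lemmas \ref{disint}, \ref{measure_zero} and \ref{EumnG}; with these in hand the combinatorial construction at the end of the proof of Theorem \ref{thm1G} transfers verbatim.

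The first key step is to show that
$$\bm D_{\U,m}=\{(x,\bm\T)\in\bm X:\forall k\ge m,\ \mathrm{emp}(x,T_k^e(\bm\T))\notin\U\}$$
is null for every $(G\times G)$-invariant measure $\mu$ on $\bm X$. Disintegrating $\mu$ over its $\bT$-marginal and repeating the argument of Lemma \ref{disint} gives that $\mu_{\bm\T}$ is $\tau$-invariant for $\mu_{\bT}$-almost every $\bm\T$. If $\mu(\bm D_{\U,m})>0$, some vertical section $D_{\U,m,\bm\T}$ has positive mass under a $\tau$-invariant measure, hence under some ergodic $\mu_0\in\mathcal{M}^{\mathsf{erg}}_\tau(X)$. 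By the ergodic theorem (as invoked in the proof of Lemma \ref{measure_zero}) applied along the F\o lner sequence $(T_k^e(\bm\T))_{k\in\N}$, some $x\in D_{\U,m,\bm\T}$ is quasigeneric for $\mu_0$, giving $\mathrm{emp}(x,T_{k_j}^e(\bm\T))\to\mu_0\in\U$ in the weak-$\star$ topology along some subsequence---contradicting the definition of $D_{\U,m,\bm\T}$.

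Next I would define $\bm D_{\U,m,n}$ and $\bm E_{\U,m,n,x,\bm\T}$ exactly as in the proof of Theorem \ref{thm1G} and show that $\sup_{(x,\bm\T)\in\bm X}\overline d_{\mathsf{Ban}}(\bm E_{\U,m,n,x,\bm\T})\to 0$ as $n\to\infty$. \textbf{This is the one place where the proof genuinely departs from the symbolic case.} The decomposition
$$\bm D_{\U,m,n}=\medcap_{k=m}^n\medcup_{S\in\mathcal S_k}\medcup_{s\in S}\bigl\{x:\mathrm{emp}(x,Ss^{-1})\notin\U\bigr\}\times\bigl\{\bm\T:T_k^e(\bm\T)=Ss^{-1}\bigr\}$$
exhibits $\bm D_{\U,m,n}$ as closed (because $\M(X)\setminus\U$ is closed and $x\mapsto\mathrm{emp}(x,Ss^{-1})$ is weak-$\star$ continuous), but not necessarily clopen as in the symbolic setting. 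Consequently $\bm\Phi_{\U,m,n}(\mu):=\mu(\bm D_{\U,m,n})$ is only upper semi-continuous on $\mathcal{M}_{G\times G}(\bm X)$, and Dini's theorem does not apply directly. I would instead invoke its upper-semi-continuous variant: a nonincreasing sequence of upper semi-continuous functions on a compact space decreasing pointwise to zero converges uniformly (for every $\eta>0$ the nested open sets $\{\bm\Phi_{\U,m,n}<\eta\}$ cover the compact space $\mathcal{M}_{G\times G}(\bm X)$, so one of them must equal the whole space). Combined with Lemma \ref{upper_banach_density_invariantG} applied in the group $G\times G$, this yields the desired uniform decay.

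Finally, the combinatorial construction from the proof of Theorem \ref{thm1G} carries over without change: given $\varepsilon>0$, choose $n\ge m$ so that the above supremum falls below $\varepsilon/4$; pick $l_0$ so that every shape in $\mathcal S_{l_0}$ is $(KK^{-1},\tfrac{\varepsilon}{2|K|^2})$-invariant, where $K=\bigcup_{k=m}^n\bigcup_{S\in\mathcal S_k}S$; for each tile $T\in\T_{l_0}$ use Lemma \ref{Foelner_in_group} together with a pigeonhole argument over $h\in\hat S(T)$ to produce $h_T\in\hat S(T)$ with $|E_{\U,m,n,x,h_T(\bm\T)}\cap T|/|T|<\varepsilon/2$; inside the $KK^{-1}$-core of $T$, for every $g$ outside this set, select the tile $T_{k(g)}^g(h_T(\bm\T))$ of maximal scale $k(g)\in[m,n]$ at which it is $\U$-ergodic. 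The deterministic property of $\bT$ ensures pairwise disjointness, Lemma \ref{lemma-core} controls the $T\setminus T_{KK^{-1}}$ boundary, and Lemma \ref{density_tiling} promotes the per-tile lower bound to the required lower-Banach-density estimate $\ge 1-\varepsilon$ for the union of $\mathcal Q$. The only genuinely new obstacle, as indicated, is the upper-semi-continuous Dini step in the second stage; everything else is a faithful translation of the symbolic argument.
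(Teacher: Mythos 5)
Your proposal is correct and follows exactly the route the paper intends for this theorem (the paper only sketches it in the final remarks, asserting that the symbolic proofs transfer with blocks replaced by empirical measures $\tfrac1{|K|}\sum_{g\in K}\delta_{g(x)}$). Your one substantive addition is both necessary and right: since $\{x:\tfrac1{|K|}\sum_{g\in K}\delta_{g(x)}\notin\U\}$ is only closed rather than clopen, the functions $\mu\mapsto\mu(\bm D_{\U,m,n})$ are merely upper semi-continuous, and your replacement of Dini's theorem by its upper semi-continuous variant (the nested open sets $\{\bm\Phi_{\U,m,n}<\eta\}$ covering the compact set $\M_{(G\times G)}(\bm X)$) correctly patches the one step that the paper's claim that ``most of the proofs actually simplify'' glosses over.
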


\begin{theorem}
Let $\tau$ be an action of a countable amenable group $G$ on a compact metric space $X$, such that the set of $\tau$-\im s, $\M_\tau(X)$, is a Bauer simplex. Let $\U$ be an open set in $\M(X)$ containing all ergodic measures of the action $\tau$. Let $\bT=\bigvee_{k\in\N}\mathsf T_k$ be a F\o lner, deterministic system of tilings of $G$. Then, for every $\varepsilon>0$ there exists $j_0\in\N$ such that for every $j\ge j_0$ and every pair $(x,\bm{\T})$, $x\in X$, $\bm{\T}=(\T_k)_{k\in\N}\in\bT$, the union $M^{\mathsf{NE}}(x,\T_j)$ of tiles $T$ of $\T_j$ such that $\frac1{|T|}\sum_{g\in T}\delta_{g(x)}\notin\U$ has upper Banach density smaller than $\varepsilon$.
\end{theorem}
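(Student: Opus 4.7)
The plan is to mirror the proof of Theorem~\ref{thm2G}, exploiting two simplifications available in the general (non-symbolic) setting: the empirical measure of a finite $F\subset G$ at $x\in X$ is simply $\nu_{F,x}:=\frac{1}{|F|}\sum_{g\in F}\delta_{g(x)}$, so that if $F=\bigsqcup_{j}F_j$ then $\nu_{F,x}=\sum_j\frac{|F_j|}{|F|}\nu_{F_j,x}$ automatically (convexity of $d^*$ replaces Lemma~\ref{konkatenacjaG} without loss of constants), and an elementary total-variation bound shows $d^*(\nu_{F,x},\nu_{H,x})=O(\varepsilon)$ whenever $H$ is an $\varepsilon$-modification of $F$ (the analog of Lemma~\ref{close_mod}). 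I also need the general-system analog of Theorem~\ref{blocks_close_to_measuresG}, asserting that for every $\eta>0$ there exists $j_0$ such that $d^*(\nu_{S,x},\M_\tau(X))<\eta$ for every $S\in\mathcal{S}_j$, $j\ge j_0$, and every $x\in X$; this follows from Theorem~\ref{inv_measureG} together with the F\o lner property of the family of shapes.

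Without loss of generality $\U=\mathsf{Ball}(\M_\tau^{\mathsf{erg}}(X),\varepsilon)$ by compactness of $\M_\tau^{\mathsf{erg}}(X)$. Choose $\gamma\in(0,\tfrac{4\varepsilon}{3})$ via Lemma~\ref{barycenter_compact}, let $j_0$ be the threshold above for $\eta=\tfrac{\gamma}{4}$, fix $j\ge j_0$ and a pair $(x,\bm{\T})$, and write $\T:=\T_j$, $K:=\bigcup_{S\in\mathcal{S}_j}S$. Pick $m$ so large that every shape in $\bigcup_{k\ge m}\mathcal{S}_k$ is sufficiently $(KK^{-1},\cdot)$-invariant that Lemma~\ref{lemma-core} forces a relative core deficit below $\tfrac{\gamma}{4}$. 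Now apply the first theorem of the Final remarks (the general-system analog of Theorem~\ref{thm1G}) with the smaller neighbourhood $\V:=\mathsf{Ball}(\M_\tau^{\mathsf{erg}}(X),\tfrac{\gamma}{4})$ and $\tfrac{\varepsilon}{3}$ in place of $\varepsilon$: this yields a family $\mathcal{Q}$ of pairwise disjoint $\V$-ergodic tiles with shapes in $\bigcup_{k=m}^n\mathcal{S}_k$ whose union has lower Banach density $\ge 1-\tfrac{\varepsilon}{3}$, together with the auxiliary covering tiling $\mathcal{P}$ from the construction of Theorem~\ref{thm1G} satisfying $|P\cap\bigcup_{Q\in\mathcal{Q}}Q|/|P|>1-\tfrac{\varepsilon}{3}$ for every $P\in\mathcal{P}$. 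For each $Q\in\mathcal{Q}$ fix $\mu_Q\in\M_\tau^{\mathsf{erg}}(X)$ with $d^*(\nu_{Q,x},\mu_Q)<\tfrac{\gamma}{4}$ and set $Q':=\bigcup\{T\in\T:T\cap Q_{KK^{-1}}\neq\varnothing\}$; determinism of $\bT$ and the core property imply $Q_{KK^{-1}}\subseteq Q'\subseteq Q$ with $|Q\setminus Q'|/|Q|<\tfrac{\gamma}{4}$, and the modification estimate gives $d^*(\nu_{Q',x},\mu_Q)<\tfrac{\gamma}{2}$.

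For each $T\in\T(Q):=\{T\in\T:T\subseteq Q'\}$ select $\lambda_T\in\M_\tau(X)$ with $d^*(\nu_{T,x},\lambda_T)<\tfrac{\gamma}{4}$. Since $\nu_{Q',x}=\sum_{T\in\T(Q)}\tfrac{|T|}{|Q'|}\nu_{T,x}$, convexity of $d^*$ gives $d^*\bigl(\mu_Q,\sum_T\tfrac{|T|}{|Q'|}\lambda_T\bigr)<\gamma$. Applying Lemma~\ref{barycenter_compact} to the discrete barycenter $\xi_Q:=\sum_T\tfrac{|T|}{|Q'|}\delta_{\lambda_T}$ forces
\[
\sum_{T\in\T^{\mathsf{NE}}(Q)}\tfrac{|T|}{|Q'|}<\tfrac{\varepsilon}{3},
\]
because any nonergodic $T\subseteq Q$ (i.e.\ $\nu_{T,x}\notin\U$) satisfies $d^*(\nu_{T,x},\mu_Q)\ge\varepsilon$, hence $d^*(\lambda_T,\mu_Q)>\tfrac{\varepsilon}{3}$. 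Tile-by-tile summation inside each $P\in\mathcal{P}$ then bounds $|P\cap M^{\mathsf{NE}}(x,\T)|/|P|$ by
\[
\frac{|P\setminus\bigcup_{Q\subset P}Q|}{|P|}+\sum_{Q\subset P}\frac{|Q|}{|P|}\frac{|Q\setminus Q'|}{|Q|}+\sum_{Q\subset P}\frac{|Q|}{|P|}\frac{|M^{\mathsf{NE}}(Q)|}{|Q|}<\frac{\varepsilon}{3}+\frac{\gamma}{4}+\frac{\varepsilon}{3}<\varepsilon,
\]
and Lemma~\ref{density_tiling} combined with Remark~\ref{densities_comp} converts this uniform bound over $\mathcal{P}$ into $\overline{d}_{\mathsf{Ban}}(M^{\mathsf{NE}}(x,\T_j))<\varepsilon$. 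The main technical hurdle, just as in Theorem~\ref{thm2G}, is the three-layer bookkeeping $\mathcal{P}\supset\mathcal{Q}\supset\T$ together with the uniform boundary control $|Q\setminus Q'|/|Q|$ obtained through invariance of shapes; once this quantitative hierarchy is in place, convexity of $d^*$ and the Bauer-simplex concentration estimate from Lemma~\ref{barycenter_compact} finish the argument.
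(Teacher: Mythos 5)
Your proposal is correct and follows essentially the same route as the paper: the paper proves Theorem~\ref{thm2G} for subshifts and then notes in the final remarks that the general case is obtained by replacing empirical measures of blocks with the averages $\frac1{|F|}\sum_{g\in F}\delta_{g(x)}$, whereupon Lemma~\ref{konkatenacjaG} becomes exact convexity and Lemma~\ref{close_mod} becomes an elementary estimate --- exactly the two simplifications you identify. Your three-layer bookkeeping $\mathcal P\supset\mathcal Q\supset\T$, the core/modification control of $Q'$, and the application of Lemma~\ref{barycenter_compact} to the discrete barycenter $\xi_Q$ reproduce the paper's argument step for step.
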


\end{document}